\newcommand{\tikzAngleOfLine}{\tikz@AngleOfLine}
\def\tikz@AngleOfLine(#1)(#2)#3{%
\pgfmathanglebetweenpoints{%
\pgfpointanchor{#1}{center}}{%
\pgfpointanchor{#2}{center}}
\pgfmathsetmacro{#3}{\pgfmathresult}%
}
\newcommand{\bS}{\mathbb{S}}
\newcommand{\bN}{\mathbb{N}}
\newcommand{\bM}{\mathbb{M}} 
\newcommand{\bE}{\mathbb{E}} 
\newcommand{\bD}{\mathbb{D}}
\newcommand{\bfQ}{{\bf Q}}
\newcommand{\bbQ}{\mathbb{Q}} 
\newcommand{\bZ}{\mathbb{Z}}
\newcommand{\wt}{\widetilde}
\newcommand{\ba}{\bar{\alpha}}
\newcommand{\La}{\Lambda}
\newcommand{\ve}{\varepsilon}
\newcommand{\ul}{\underline}
\newcommand{\rad}{\operatorname{rad}}
\newcommand{\soc}{\operatorname{soc}}
\renewcommand{\mod}{\operatorname{mod}}
\newcommand{\Hom}{\operatorname{Hom}} 
\newcommand{\Img}{\operatorname{Im}} 
\newcommand{\Adj}{\operatorname{Ad}}
\newcommand{\Arr}{\operatorname{Ar}}
\begin{document}

\newtheorem{defi}{Definition}[section]
\newtheorem{rem}[defi]{Remark}
\newtheorem{prop}[defi]{Proposition}
\newtheorem{ques}[defi]{Question}
\newtheorem{lemma}[defi]{Lemma}
\newtheorem{cor}[defi]{Corollary}
\newtheorem{thm}[defi]{Theorem}
\newtheorem{expl}[defi]{Example} 
\newtheorem*{mthm}{Main Theorem} 
\newtheorem*{remk}{Remark}

\parindent0pt

\title[Periodicity Shadows I]{Periodicity shadows I: A new approach to combinatorics of periodic algebras$^*$  
\footnote{\tiny $^*$ This paper was financed from the grant no. 2023/51/D/ST1/01214 of the Polish National Science 
Centre}} 

\author[A. Skowyrski]{Adam Skowyrski}
\address[Adam Skowyrski]{Faculty of Mathematics and Computer Science, 
Nicolaus Copernicus University, Chopina 12/18, 87-100 Torun, Poland}
\email{skowyr@mat.umk.pl} 

\subjclass[2020]{Primary: 05E16, 16D50, 16E05, 16E20, 16G20, 16G60, 16Z05}
\keywords{Symmetric algebra, Tame algebra, Periodic algebra, Generalized quaternion type, Quiver, 
Cartan matrix, Adjacency matrix}

\begin{abstract} This article is devoted to introduce a new notion of periodicity shadow, which appeared naturally 
in the study of combinatorics of tame symmetric algebras of period four, or more generally, algebras of generalized 
quaternion type \cite{AGQT}. For any such an algebra $\La$, we consider its shadow $\bS_\La$, which is the (signed) 
adjacency matrix of the Gabriel quiver of $\La$. Studying properties of shadows $\bS_\La$ leads us to the definition 
of the periodicity shadow, which is basically, a skew-symmetric integer matrix satisfying certain set of conditions 
motivated by the properties of shadows $\bS_\La$. This turned out to be a very useful tool in describing the 
combinatorics of Gabriel quivers of algebras of generalized quaternion type, not only for algebras with small Gabriel 
quivers (i.e. up to $6$ vertices), which it was originally desined for. In this paper, we introduce and briefly discuss 
this notion and present one of its theoretical applications, which shows how significant it is. Namely, the main result 
of this paper describes the global shape of the Gabriel quivers of algebras of generalized quaternion type, as quivers 
obtained from some basic shadows by attaching $2$-cycles, and moreover, postion of the $2$-cycles is restricted by precise 
rules (see the Main Theorem). Computational aspects are reported in the second part \cite{BSapx}. 
\end{abstract}

\maketitle 

\section{Introduction}\label{sec:1} 

Throughout this paper, by an algebra we mean a basic finite-dimensional associative $K$-algebra over an 
algebraically closed field $K$. For an algebra $\La$, we denote by $\mod \La$ the category of all finitely 
generated (right) $\La$-modules, and by $D$ the standard duality $D=\Hom_K(-,K)$ on $\mod \La$. Recall that 
for a quiver $Q$, the {\it path algebra} $KQ$ of $Q$ is a vector space over $K$ with basis given by all 
paths of length $\geqslant 0$ in $Q$, and multiplication given by concatenation of paths (see \cite{ASS,Sk-Y}). An 
ideal $I$ of $KQ$ is called {\it admissible} if $R_Q^m\subseteq I \subseteq R_Q$, for some $m\geqslant 2$, where $R_Q$ 
is the ideal of $KQ$ generated by all paths of length $\geqslant 1$ ($R_Q$ is the Jacobson radical of $KQ$). 
By the well known results of Gabriel \cite{Ga1,Ga2}, every (basic) algebra $\La$ over $K$ admits an isomorphism 
$\La\cong KQ/I$, where $Q$ is a quiver and $I$ an admissible ideal of $KQ$. Every such an isomorphism is said to 
be a {\it presentation} of $\La$ and $(Q,I)$ is sometimes called a {\it bound quiver}. The quiver $Q$ does not depend 
on the presentation (up to isomorphism), it is called the {\it Gabriel quiver} of $\La$, and denoted by $Q_\La$. Instead 
of just presentation, it is  sometimes said that the algebra has a presentation {\it by quiver and relations}, because 
an ideal $I$ is admissible if and only if it is generated by a finite number of so called relations (see Section 
\ref{sec:2}). In other words, every algebra is determined by a quiver $Q=Q_\La$ and a finite set of relations, which 
are given by paths in $Q$ (and some scalars from $K$). \medskip 

This article can be viewed as a part of algebraic combinatorics with representation-theoretical flavour. 
Actually, we will use combinatorial notion (the periodicity shadow) reflecting the structure of module categories 
to derive information on the structure of Gabriel quivers of algebras in a particular class we are interested in. 
The algebras we want to study are the tame symmetric algebras of period $4$, or more generally, so called algebras of 
generalized quaternion type, which form important subclasses of the class of selfinjective algebras (see discussion below). \medskip 

From the remarkable Tame and Wild Theorem of Drozd (see \cite{Dro,CB}) the class of algebras over $K$ may be divided 
into two disjoint classes. The first class consists of the {\it tame algebras} for which the indecomposable 
modules occur in each dimension $d$ in a finite number of discrete and a finite number of one-parameter families. 
The second class is formed by the {\it wild algebras}, whose module category contains module categories of all 
algebras over $K$, and informally speaking, there is no hope to classify indecomposable modules over wild algebras. 
The distinction between tame and wild algebras is based on the 'possibility of classification' of indecomposable 
modules, however we will not use tameness in this context. In this paper, we basically exploit the fact that 
tame algebras seem to be more accessible in terms of classifying algebraic structures, in particular, their Gabriel 
quivers cannot be too big (see also Section \ref{sec:2}). \medskip 

A prominent role in the representation theory of algebras is played by the selfinjective algebras. Recall 
that $\La$ is said to be {\it selfinjective} if and only if $\La$ is injective as a $\La$-module, or 
equivalently, all projective modules in $\mod \La$ are injective. Note that selfinjective algebras are 
exactly the basic Frobenius algebras, so any such an algebra admits a non-degenerate associative $K$-bilinear 
form $(-,-):\La\times \La\to K$, by a classical result of Brauer, Nesbitt and Nakayama \cite[see Theorem IV.2.1]{Sk-Y}. 
If this form is additionally symmetric, i.e. $(a,b)=(b,a)$, for all $a,b\in \La$, then the algebra is called 
{\it symmetric}, and the symmetric algebras form a well known class of selfinjective algebras. There are many classical 
examples of symmetric algebras, for instance, blocks of finite-dimensional group algebras \cite{E1} or Hecke algebras 
associated to Coxeter groups \cite{ad1}. Any algebra $\La$ is a quotient of its trivial extension $T(\La)$, which 
is a symmetric algebra. \smallskip 

For a module $M$ in $\mod\La$, its {\it syzygy} is a kernel $\Omega(M)$ of a projective cover of $M$ in $\mod\La$. 
A module $M$ in $\mod\La$ is called {\it periodic} if $\Omega^d(M)\simeq M$, for some $n\geqslant 1$, and the smallest 
such a number is the {\it period} of $M$. The notion of {\it inverse syzygy} $\Omega^{-1}(M)$ for a module $M$ in $\mod \La$ 
is defined dually using injective envelopes. For $\La$ selfinjective, the syzygy operator induces an equivalence of 
the stable module category $\ul{\mod}\La$, and its inverse is the shift of a triangulated structure on $\ul{\mod}\La$ 
\cite{Happ}. \smallskip 

An important class of selﬁnjective algebras consists of the {\it periodic} algebras $\La$, for which $\La$ is a periodic 
module in $\mod \La^e$, where $\La^e = \La^{op} \otimes_K \La$ is the enveloping algebra of $\La$ (this is equivalent 
to say that $\La$ is periodic as a $\La$-$\La$-bimodule). Every periodic algebra $\La$ has periodic module category,
that is, all (nonprojective) modules in $\mod \La$ are periodic with period dividing the period of $\La$. Note that 
periodicity of simple modules in $\mod \La$ implies that $\Lambda$ is selfinjective, due to \cite[Theorem 1.4]{GSS}. 
Finding and possibly classifying periodic algebras is an important problem, as they appear in many places, revealing 
connections with group theory, topology, singularity theory, cluster algebras and algebraic combinatorics (see the 
survey \cite{ESk08} and the introduction of \cite{WSA1} and \cite{VM}). \medskip 

In this paper we develop combinatorial tools, which allow to describe the general structure of Gabriel 
quivers of all tame symmetric algebras with simple modules periodic of period $4$ (in particular, all 
periodic algebras of period $4$). Combining matrix equations and some covering techniques, we obtain quite strong 
restrictions on the Gabriel quivers, including the knowledge on the position of $2$-cycles and the local 
structure around them (see the Main Theorem). \smallskip 

Restriction only to period $4$ is motivated by a conjecture that all tame symmetric periodic algebras of non-polynomial 
growth have period $4$ \cite[see Problem]{WSA1}. Any tame symmetric periodic algebra will be called TSP4, for short. Such 
algebras arised naturally in the study of blocks of group algebras with generalized quaternion defect groups. Using 
known properties of these blocks, Erdmann introduced and investigated in \cite{nosim,QT1,QT2,E1} the algebras of 
{\it quaternion type}, as the (indecomposable) representation-infinite tame symmetric algebras with non-singular Cartan 
matrix for which every indecomposable non-projective module is periodic of period dividing $4$. The most recent extension 
concerns so called algebras of {\it generalized quaternion type} \cite{AGQT}, or simply GQT algebras, which are by deﬁnition, 
the (indecomposable) tame symmetric algebras of infinite representation type with all simple modules of period $4$. Note that
any algebra of quaternion type, or more generally, representation-infinite TSP4 algebra, is a GQT algebra, since existence of 
a simple module of period $2$ implies that the algebra is of finite representation type (see \cite{note}). \smallskip 

At this point, we also mention that the techniques introduced in this article seem to have strong consequences. Namely, 
besides the Main Theorem, we obtain as an immediate corollary (see Corollary \ref{coro1} and discussion after) a result 
generalizing nontrivial research presented in \cite{QT1,QT2}. \medskip 

The key notion of this paper, i.e. {\it periodicity shadow}, is motivated by a certain property 
of the adjacency matrix associated to the Gabriel quiver of a GQT-algebra. An unexpected 
discovery showed that for any GQT algebra $\La=KQ/I$ (or more generally, any symmetric algebra with 
simples of period $4$), its Cartan matrix $C=C_\La$ satisfies the following equation 
$$\Adj_Q\cdot C=0,$$ 
where $\Adj_Q$ is so called {\it signed adjacency matrix} of the Gabriel quiver $Q=Q_\La$ of $\La$, 
measuring the differences between the number of arrows $i\to j$ and $j\to i$ in $Q$ (see Section \ref{sec:2}, 
Theorem \ref{thm:motivation}). This property serves as a tool to classify all possible matrices $\Adj_Q$, 
and hence, also Gabriel quivers of considered algebras. For any GQT algebra $\Lambda$, we denote by 
$\bS_\Lambda$ the adjacency matrix of its Gabriel quiver $\Adj_{Q_\Lambda}$, and call it the {\it shadow} 
(of algebra $\La$). \smallskip 

The idea behind introducing the concept of periodicity shadow is to catch the combinatorics of Gabriel 
quivers of GQT algebras reflected in the properties of their associated adjacency matrices. Imposing tameness 
assumption, we also obtain strong restrictions of the coefficients of associated shadow, which cannot exceed 
$\pm 2$. Motivated by the above equation, we study a few necessary conditions on matrix $A$, to be a shadow of a 
GQT algebra, and call it a {\it (tame) periodicity shadow}, if all these conditions are satisfied. In other words, 
we study particular superset of the set of all shadows $\bS_\La$ of GQT algebras $\La$. \smallskip 

The main object of our interest is the structure of GQT algebras and related subclasses, though we will 
not deal with algebras directly. Crucial property of these algebras is periodicity of simples, which justifies the name 'shadow', 
choosed to stress that we are dealing with something that reflects periodicity (of simples) at the combinatorial level of 
the associated skew-symmetric matrices. For more details we refer to Section \ref{sec:3}. \medskip 

Computing all periodicity shadows is a different issue, which is discussed in details in part II \cite{BSapx}. 
We only mention that even for small number $n$ of vertices, say $n\leqslant 6$, it is a challenge, whereas for bigger 
$n$, it quickly becomes intractable in practice. All we know for $n\geqslant 8$ is that there exists a set of tame 
periodicity shadows, and there is an algorithm (based on recursive generating) which can compute it, at least in theory. 
Classifying Gabriel quivers of GQT algebras via shadows is possible for $n\in\{3,\dots,6\}$, which are the most interesting 
cases. We will not elaborate more on this topic, which is a future research project, related to classification of all GQT 
algebras with small quivers. Our goal here is to focus on theoretical applications of the notion of periodicity shadow. 
The first instance is the following theorem, which is the main result of this paper. \medskip 

\begin{mthm} Let $n\geqslant 1$ be a natural number. Then there is a finite set $\bS(n)$ of tame periodicity shadows $n\times n$ 
such that every GQT algebra $\Lambda$ with Gabriel quiver $Q=Q_\La$ having $n$ vertices satisfies the following conditions. 
\begin{enumerate} 
\item[(a)] A minimal subquiver $Q^\times$ of $Q$ without loops and $2$-cycles has $\Adj_{Q^\times}\in\bS(n)$ (up to relabelling 
of vertices). 
\item[(b)] There are $2$-cycles $\xymatrix{a_i\ar@<-0.4ex>[r] & b_i \ar@<-0.4ex>[l]}$ in $Q$, $1\geqslant i \geqslant m$, such 
that $\{a_i,b_i\}$ and $\{a_j,b_j\}$ are disjoint for all $i\neq j$, and $Q$ is obtained from $Q^\times$ by attaching these 
$2$-cycles (and possibly loops). 
\end{enumerate} 
Moreover, any of the $2$-cycles in (b) is contained in one of the following blocks in $Q$: 
$$\xymatrix@R=0.4cm{\\ \\ \circ \ar@<-0.45ex>[r] & \bullet \ar@<-0.45ex>[l] \ar@(rd,ru)@{..>}[]& }
\xymatrix@R0.4cm{\\ & \bullet \ar[rd] \ar@<-0.1cm>[dd] & \\ 
\circ \ar[ru]   && \circ \ar[ld]\\ 
& \bullet \ar@<-0.1cm>[uu] \ar[lu] & } \quad 
\xymatrix@R=0.4cm{ &\bullet \ar[rdd] & \\ & \bullet\ar[rd]& \\ 
\bullet \ar[ru] \ar[ruu] \ar@<-0.35ex>[rr]&& \bullet \ar[ld] \ar@<-0.35ex>[ll]\\ & \circ \ar[lu] & } \quad 
\xymatrix@R=0.4cm{& \bullet \ar[rdd] & \\ & \bullet \ar[rd] & \\ 
\bullet \ar[ru]\ar[ruu] \ar@<-0.35ex>[rr] && \ar[ld]\ar[ldd] \ar@<-0.35ex>[ll] \bullet \\ 
& \bullet \ar[lu] & \\ & \ar[luu] \bullet & }$$ 
\end{mthm} 

In the sequel, we will sometimes call the above theorem, the Reconstruction Theorem, to stress that it describes how to 
reconstruct the Gabriel quiver of an algebra from its shadow by adding $2$-cycles (and possibly loops). \smallskip 

We expect to use shadows as a conceptual tool to provide some further restrictions on the general shape of $Q$, the above theorem 
is the first one. This is related to a recent conjecture that the class of so called {\it generalized weighted surface algebras} 
\cite{GWSA} exhaust all GQT algebras. Without going into technical details, let us only mention that the generalized weighted 
surface algebras form the largest known class of GQT algebras which arised as a natural generalization (related to mutations) 
of the {\it virtual mutations} \cite{VM} and {\it weighted surface algebras} \cite{WSA1,WSA2,WSA3}. The weighted surface 
algebras originated from the cluster theory and they have a combinatorial description (by quivers and relations) based on 
combinatorics of a surface with triangulation. Similar description exists for generalized weighted surface algebras, and 
roughly speaking, they are given as quotients $KQ/I$, where $Q$ is a glueing of a finite number of blocks of types I-V, but 
$I$ may not be admissible \cite[see Introduction]{GWSA}. Analyzing what can be the position of $2$-cycles in Gabriel quivers 
of generalized weighted surface algebras, one can see that these are exactly as described in the Main Theorem, which confirm the 
conjecture, at least at the level of $2$-cycles. The next natural step (we belive within reach) is to use the notion of 
shadow to prove that Gabriel quivers of GQT algebras admit (modulo $2$-cycles under control) the same form as Gabriel quivers 
of generalized weighted surface algebras, i.e. they are glueings of blocks of types I-V (without arrows in $I$). This would be 
a big contribution to the solution of the conjecture in general, nonetheless, our main result is a nontrivial step towards. \medskip 

The paper is organized as follows. In Section \ref{sec:2} we give a brief introduction to basic notion used later, 
and discuss motivation for this study. Next short section serves a few necessary results concerning combinatorics 
of Gabriel quivers of tame symmetric algebras of period $4$ needed further. Section \ref{sec:3} is devoted to 
introduce the main notion of periodicity shadow and present some basic facts about it. We finish with Section \ref{sec:4}, 
where we provide the proof of the Main Theorem. \medskip 

For the necessary background on the representation theory of algebras, we refer the reader to the books \cite{ASS,Sk-Y}.

\bigskip

\section{Preliminaries and motivation}\label{sec:2} 

By a quiver we mean a quadruple $Q=(Q_0,Q_1,s,t)$, where $Q_0$ is a finite set of vertices, $Q_1$ a finite set of arrows 
and $s,t:Q_1\to Q_0$ functions assigning to every arrow $\alpha$ its source $s(\alpha)$ and its target $t(\alpha)$. 
For a quiver $Q$, we denote by $KQ$ the {\it path algebra} of $Q$, whose $K$-basis is given by all paths of length 
$\geqslant 0$ in $Q$. Recall that the Jacobson radical of $KQ$ is the ideal $R_Q$ of $Q$ generated by all paths of 
length $\geqslant 1$, and an ideal $I$ of $KQ$ is called {\it admissible}, provided that $R_Q^m\subseteq I \subseteq R_Q^2$, 
for some $m\geqslant 2$. Moreover, the trivial paths $\ve_i$ (of length $0$) at vertices $i\in Q_0$ form a complete 
set of (pairwise orthogonal) primitive idempotents of $KQ$ with $\sum_{i\in Q_0}\ve_i$ the unit of $KQ$. \medskip 

If $Q$ is a quiver and $I$ an admissible ideal $I$ of $KQ$, then $(Q,I)$ is said to be a {\it bound quiver}, 
and the associated algebra $KQ/I$ is called a {\it bound quiver algebra}. It is well-known that any algebra 
$\Lambda$ over an algebraically closed field admits a bound quiver algebra form, and by a presentation of $\Lambda$ 
we mean particular isomorphism $\Lambda\cong KQ/I$, for some bound quiver $(Q,I)$. In this case, the cosets 
$e_i=\varepsilon_i+I\in \La$ form a complete set of primitive orthogonal idempotents of $\La$ and $\sum_{i\in Q_0}e_i$ 
is the unit of $\Lambda$. \smallskip 

We recall that for any algebra $\La$ and a complete set of primitive orthogonal idempotents $e_i$, $i\in\{1,\dots,n\}$, 
with $\sum e_i=1_\La$, the {\it Gabriel quiver} of $\Lambda$ (sometimes also called the {\it ordinary quiver}) is the 
quiver $Q_\La$ with vertex set $Q_0=\{1,\dots,n\}$ and the arrows $i\to j$ are in bijective correspondence with the 
vectors in a basis of the $K$-vector space $e_i(J/J^2)e_j$, where $J$ is the Jacobson radical $J=J_\La=R_Q+I$ of 
$\Lambda$. This quiver is uniquely determined by the algebra, and does not depend on the choice of a complete set of 
primitive othogonal idempotents \cite[see II.3]{ASS}. Moreover, for any presentation $\Lambda=KQ/I$, the Gabriel 
quiver of $\La$ is $Q_\La=Q$, and $Q_\Lambda$ is connected if and only if $\La$ is indecomposable as an algebra. \medskip 

A {\it relation} in the path algebra $KQ$ is any $K$-linear combination of the form 
$$\sum_{i=1}^r \lambda_i w_i,\leqno{(1)}$$ 
where all $\lambda_i\in K$ are non-zero and $w_i$ are pairwise different paths of of length $\geqslant 2$ with common 
source and target. It is known that an ideal $I$ of $KQ$ is admissible if and only if $I$ is generated by a finite number 
of relations $\rho_1,\dots,\rho_m$. Moreover, we may choose such relations $\rho_1,\dots,\rho_m$ to be minimal (i.e. each 
$\rho_i$ is not a combination of relations from $I$). For a bound quiver algebra $A=KQ/I$, given the set of (minimal) 
relations $\rho_1,\dots,\rho_m$ generating $I$, we have the (minimal) equalities $\rho_1=0,\dots,\rho_m=0$, called 
{\it minimal relations}. \smallskip 

For a relation of the form $(1)$ and a path $w$ in $Q$, we write $w\prec \rho$, if $w$ is one of the summands 
of $\rho$, i.e. $w=w_k$, for some $k\in\{1,\dots,r\}$. Moreover, if $w$ is a path in $Q$, we will use notation 
$w\prec I$, if $w\prec \rho_j$, for $j\in\{1,\dots,m\}$ and some choice of minimal relations $\rho_1,\dots,\rho_m$ 
generating $I$. Changing the set of minimal generators of $I$, the resulting set of paths that are $\prec I$ may 
also change. When we write $w\prec I$, we always have a fixed set of minimal relations generating $I$. \medskip

Let $\Lambda$ be an indecomposable algebra with given presentation $\Lambda=KQ/I$. Then modules $P_i=e_i\La$, 
for $i\in Q_0$, form a complete set of all pairwise non-isomorphic indecomposable projective modules in $\mod\La$, 
and modules $I_i=D(\La e_i)$, for $i\in Q_0$, form a complete set of all pairwise non-isomorphic indecomposable 
injective modules in $\mod\La$. We denote by $S_i$, for $i\in Q_0$, the associated simple module 
$S_i=P_i/\rad P_i\cong \soc I_i$. To avoid details with relabeling, we assume that the given algebra $\La$ 
has a presentation $\La=KQ/I$, where $Q_0$ has a fixed ordering $Q_0=\{1,\dots,n\}$. \smallskip 

Let us note here a few consequences of the tameness assumption. First of all, note that if $\La$ is a tame 
algebra, then its Gabriel quiver $Q$ has at most two arrows from $i$ to $j$, for any $i,j\in Q_0$. Indeed, 
if this was not the case, then $Q$ admits a subquiver of the form 
$$\Delta=K_3= \xymatrix@C=0.5cm{\circ\ar@<+0.15cm>[rr]\ar@<-0.15cm>[rr]\ar[rr] && \circ}$$ 
and the path algebra $K\Delta$ is a quotient of $\La$. But $K\Delta$ is a wild algebra, so $\La$ is 
also wild, a contradiction. Similarily, $Q$ does not contain a subquiver of the form 
$$K_2^+=\xymatrix@C=0.3cm{\circ&&\ar[ll]\circ\ar@<+0.4ex>[rr]\ar@<-0.4ex>[rr] && \circ} \ \ \mbox{ or } \ \ 
K_2^-=\xymatrix@C=0.3cm{\circ\ar[rr]&& \circ && \ar@<+0.4ex>[ll]\ar@<-0.4ex>[ll] \circ} $$  
because its path algebra is a wild algebra, hence cannot be a quotient of $\La$. In a similar way, any quiver $Q$ 
of a tame algebra cannot contain a star 
$$S_5^+= \quad \xymatrix@R=0.45cm{\circ & \circ & \circ  \\ 
\circ  & \circ \ar@{->}[r] \ar@{->}[l] \ar@{->}[u] \ar@{->}[ru] \ar@{->}[lu] & \circ }\qquad\mbox{ or }\qquad 
S_5^-= \quad \xymatrix@R=0.45cm{\circ & \circ & \circ  \\ 
\circ  & \circ \ar@{<-}[r] \ar@{<-}[l] \ar@{<-}[u] \ar@{<-}[ru] \ar@{<-}[lu] & \circ }$$ 
representing a source or sink of five arrows. \smallskip 

Moreover, for any two vertices $i,j$ connected by double arrows $\alpha,\ba$, there are no loops at $i,j$. Indeed, 
otherwise there exists a quotient algebra $C$ of $B=\La/J^3$ and a Galois covering $\wt{C}\to C$ such that $\wt{C}$ 
contains a full convex subcategory of one of the forms 
$$\xymatrix@C=0.6cm{\circ &\ar@<+0.4ex>[l]^{}\ar@<-0.4ex>[l]_{}\circ\ar[r]&\circ} \qquad \mbox{or} \qquad 
\xymatrix@C=0.6cm{\circ \ar@<+0.4ex>[r]^{}\ar@<-0.4ex>[r]_{}&\circ&\ar[l]\circ} $$ 
which is isomorphic to a wild hereditary algebra $K\Delta$, where $\Delta$ is of type $K_2^\pm$. As a result, using 
\cite[Theorem]{DS2} and \cite[Proposition 2]{DS1} we conclude that $\wt{C}$ and $C$ are wild. Since $C$ is a quotient 
of $B$ and $\Lambda$, we obtain that $\La$ is wild, a contradiction. \smallskip 

Using analogous argumentation, one can prove that for any vertex $i\in Q_0$, there is at most one loop at $i$, 
if $Q$ has more than one vertex. In fact, if $Q$ admits two loops at vertex $i$, then appropriate Galois covering 
contains infinite double-line 
$$\xymatrix@C=0.6cm{\dots \ar@<+0.4ex>[r]\ar@<-0.4ex>[r] & \circ 
\ar@<+0.4ex>[r]\ar@<-0.4ex>[r]&\circ\ar@<+0.4ex>[r]\ar@<-0.4ex>[r] & \dots}$$ 
being a 'resolution' of the two loops in $Q$. It follows that $Q$ has exactly one vertex, because otherwise, the 
above line can be extended to a wild subcategory of type $K_2^\pm$. \smallskip 

Finally, we will say that $\Delta$ is a subquiver {\it of type} $K_2^*$ if $\Delta$ is on of the following two quivers 
$$ \xymatrix@C=0.6cm{\circ \ar@<+0.4ex>[r]^{\ba}\ar@<-0.4ex>[r]_{\alpha}&\circ\ar[r]^{\beta}&\circ} \ \mbox{ or } \ 
\xymatrix@C=0.6cm{\circ \ar[r]^{\beta}&\circ \ar@<+0.4ex>[r]^{\ba}\ar@<-0.4ex>[r]_{\alpha}&\circ}$$
and both $\alpha\beta, \bar{\alpha}\beta\nprec I$ (or both $\beta\alpha,\beta\bar{\alpha}\nprec I$). Note that 
if $\La$ is tame, its Gabriel quiver $Q=Q_\La$ does not contain a subquiver of type $K_2^*$. To see this, let 
$\Delta$ be a subquiver in $Q$ of type $K_2^*$. Then we consider the factor algebra $C=KQ/J_C$ of $B=\La/J^3$, 
where $J_C$ is generated by all paths of length $2$ in $Q$ except $\alpha\beta$ and $\bar{\alpha}\beta$, in 
the first case, and $\beta\alpha,\beta\ba$, in the second. As before, $C$ admits a Galois covering $\wt{C}\to C$ 
containing a full subcategory 
$$\xymatrix@C=0.6cm{\circ \ar@<+0.4ex>[r]^{}\ar@<-0.4ex>[r]_{}&\circ\ar[r]&\circ} \ \mbox{ or } \ 
\xymatrix@C=0.6cm{\circ \ar[r]&\circ \ar@<+0.4ex>[r]^{}\ar@<-0.4ex>[r]_{}&\circ}$$ 
which is isomorphic to a wild hereditary algebra $K\Delta$. \medskip 

The above subquivers can be viewed as the smallest wild 'pieces' that may occur in $Q$. For hereditary algebras of 
Euclidean (or wild) type, we use standard  notation $\wt{\bD}_n,\wt{\bE}_6,\wt{\bE}_7,\wt{E}_8$ 
(or $\wt{\wt{\bD}}_n,\wt{\wt{\bE}}_{6-8}$). \smallskip 

Now, let us only mention one simplification which is also frequently used during the sequel, where 
many proofs contain wordings like "the algebra admits the following wild-one relation algebra ... in covering". 
By this we mean a precise statement in the language of covering theory, namely, an algebra $A$ is said to 
be a {\it subcategory in covering} of $\La$, provided that the algebra $B=\La/J^d$ (for some $d\geqslant 3$) 
has a quotient algebra $C$, which admits a Galois covering $\tilde{C}\to\tilde{C}/G=C$, with a finitely generated 
free group $G$, such that $\tilde{C}$ contains a full subcategory isomorphic to $A$. \smallskip 

At any time when we use this abbreviation, appropriate arguments can be verified, but we do not elaborate on this, to 
avoid making  long proofs even longer. One simple instance of an argumentation of this type was shown above 
in the case of subquivers of type $K_2^*$, where the essential results needed from covering theory are cited. 
For more background on covering techniques we refer to the papers \cite{DS1,DS2}. \bigskip 

All algebras $\La$ are assumed symmetric, hence in particular, we have $P_i=I_i$, for any $i\in Q_0$. We also assume 
$Q$ is connected, i.e. $\La$ is indecomposable as an algebra. For a vertex $i\in Q$, we denote by $p_i$ the dimension 
vector of $P_i$, and by $s_i$ the dimension vector of the simple module $S_i$. Recall that the {\it Cartan matrix} $C_\La$ 
of an algebra $\La$ is defined as $C_\La=[c_{ij}]$, where $c_{ij}=\dim_K e_j\La e_i$, that is, its columns are the dimension 
vectors $p_1,\dots,p_n$ of indecomposable projectives. Note that $C_\Lambda$ is a symmetric matrix with natural 
coefficients, if $\Lambda$ is a symmetric algebra. In fact, this is the case for more general class of algebras, 
called {\it weakly symmetric algebras}. \smallskip 

For $i\in Q_0$, we let $i^-$ be the set of arrows ending at $i$, and $i^+$ the set of arrows starting at $i$. Then $Q$ 
has a subquiver 
$$\xymatrix@R=0.3cm{x_1\ar[rd]^{\gamma_1}& ^{\dots} & x_l \ar[ld]_{\gamma_l} \\ 
&i\ar[ld]^{\alpha_1} \ar[rd]_{\alpha_k} & \\ j_1 & ^{\dots} & j_k}$$ 

A vertex with $|i^-|=p$ and $|i^+|=q$ is said to be $(p,q)$-$regular$, or just a $(p,q)$-$vertex$. If $p=q$, then $i$ 
is called $p$-$regular$ (or simply, a $p$-vertex). \smallskip 

Recall that there are natural isomorphisms \cite[see Lemma 4.1]{AGQT}
$$\Omega(S_i)=\rad P_i=\sum_{s=1}^k\alpha_{j_k}\La \quad\mbox{ and }\quad 
\Omega^{-1}(S_i)\cong (\gamma_1,\dots,\gamma_l)\Lambda\subset \bigoplus_{s=1}^l P_{x_s}.$$ 
In particular, it follows that the module $P_i^+=\bigoplus_{s=1}^k P_{j_s}$ is a projective cover of $\Omega(S_i)$, 
whereas $P_i^-=\bigoplus_{s=1}^l P_{x_s}$ is an injective envelope of $\Omega^{-1}(S_i)$. As a result, if $S_i$ is 
a periodic module of period $4$, then $\Omega^2(S_i)\simeq\Omega^{-2}(S_i)$, and hence there is an exact sequence in 
$\mod\La$ of the form 
$$0\to S_i\to P_i \stackrel{d_3}\to  P_i^- \stackrel{d_2}\to  P_i^+ \stackrel{d_1}\to P_i \to S_i\to 0 \leqno{(*)}$$ 
with $\Img d_k\cong\Omega^k(S_i)$, for $k\in\{1,2,3\}$. We denote by $p_i^+$ (respectively, $p_i^+$) the dimension vector 
of $P_i^+$ (respectively, of $P_i^-$). This dimension vector will be denoted by $\hat{p}_i$ (see also Lemma \ref{lem:3.1}). 
\medskip 

From now on, we will assume that all simple modules in $\mod\Lambda$ are periodic of period $4$. In particular, 
using the above exact sequences of projectives, we obtain the following identities:  
$$  p_i^+=p_i^-,\hfill\leqno{(p)}$$ 
for any $i\in Q_0=\{1,\dots,n\}$. These equalities have been extensively used in an ongoing work concerning the structure 
of TSP4 algebras; see \cite{EHS1,EHS2}. The identities $(p)$ lead us to the main object of our study, and the next theorem 
shows how they transform into a surprising matrix identity. \medskip 

Before we present the main theorem motivating our study, let us formulate related definition below. \small 

\begin{defi} Let $Q$ be a quiver on $Q_0=\{1,\dots,n\}$. Its {\it arrow matrix} is the matrix $\Arr_Q=[q_{ij}]\in\bM_n(\bN)$, 
whose coeeficients $q_{ij}$ count the number of arrows $i\to j$ in $Q$, for $i,j\in Q_0$. With this, we define 
so called {\bf signed adjacency matrix} $\Adj_Q$ of $Q$ as follows 
$$\Adj_Q:=\Arr_Q-\Arr_Q^T.$$ 
\end{defi} \medskip 

In other words, $\Adj_Q=[a_{ij}]$ measures the differences $a_{ij}=q_{ij}-q_{ji}$ between the number of arrows 
$i\to j$ and $j\to i$ in $Q$, for any pair of vertices $i,j\in Q_0$. Obviously, for any quiver $Q$, its (signed) 
adjacency matrix $A=\Adj_Q$ is skew-symmetric ($A^T=-A$). In particular, it has always $a_{ii}=0$, so 
one can say it does not detect loops in $Q$. Moreover, for any $2$-cycle in $Q$ (i.e. a pair of arrows $i\to j$ 
and $j\to i$), the signed adjacency matrix does not change when we remove it. The term 'signed adjaceny matrix' 
appeared also in \cite{FST08}, where it was a skew-symmetric matrix derived from an ideal triangulation of a 
bordered surface. We only note that if $Q$ does not contain loops and $2$-cycles, then $\Adj_Q$ is exactly the 
{\it exchange matrix} used in the theory of cluster algebras. \medskip 

Now, the crucial observation is stated as follows. 

\begin{thm}\label{thm:motivation} If $\Lambda=KQ/I$ is an algebra of generalized quaternion type, then its Cartan 
matrix $C=C_\Lambda$ satisfies the following identity: 
$$\Adj_Q\cdot C=0.$$ \end{thm}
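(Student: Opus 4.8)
The plan is to translate the already-established dimension-vector identities $(p)$ directly into the asserted matrix equation, so that the theorem becomes essentially a bookkeeping computation. First I would fix the identification of the columns of $C=C_\Lambda$ with the dimension vectors of the indecomposable projectives: for vertices $i,j$ one has $(p_i)_j=\dim_K e_i\La e_j=c_{ji}$, so the $i$-th column of $C$ is exactly $p_i$, and $C$ is symmetric. Under this identification I would examine the $i$-th row of $\Adj_Q\cdot C$. Its $k$-th entry is $\sum_j a_{ij}c_{jk}$, and using the symmetry $C=C^T$ to replace $c_{jk}$ by $c_{kj}=(p_j)_k$, this $i$-th row, read as a vector, equals $\sum_j a_{ij}\,p_j$. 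Hence proving $\Adj_Q\cdot C=0$ reduces to showing, for each fixed $i$, the single vector identity $\sum_j a_{ij}\,p_j=0$, where $a_{ij}=q_{ij}-q_{ji}$.

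Next I would compute this combination by splitting it according to the two summands of $a_{ij}$ and matching them with the projective cover $P_i^+$ and the injective envelope $P_i^-$ recalled before the statement. Since $P_i^+=\bigoplus_{s=1}^k P_{j_s}$ ranges over the targets of the arrows leaving $i$, the vertex $j$ occurs there exactly $q_{ij}$ times, whence $p_i^+=\sum_j q_{ij}\,p_j$; dually, since $P_i^-=\bigoplus_{s=1}^l P_{x_s}$ ranges over the sources of the arrows entering $i$, the vertex $j$ occurs exactly $q_{ji}$ times, whence $p_i^-=\sum_j q_{ji}\,p_j$. Subtracting gives $\sum_j a_{ij}\,p_j=p_i^+-p_i^-$.

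It then only remains to invoke the identity $(p)$, namely $p_i^+=p_i^-$ for every $i\in Q_0$, which forces $\sum_j a_{ij}\,p_j=0$ and therefore $\Adj_Q\cdot C=0$. For completeness I would also recall where $(p)$ itself comes from: the period-four hypothesis produces the six-term exact sequence $(*)$ of projectives, with the simple $S_i$ at both ends, and taking the alternating sum of dimension vectors along $(*)$ yields $s_i-p_i+p_i^--p_i^++p_i-s_i=0$, i.e. $p_i^+=p_i^-$.

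The representation-theoretic content — periodicity of the simples, the explicit syzygy descriptions $\Omega(S_i)=\rad P_i$ and $\Omega^{-1}(S_i)$, and the resulting exact sequence $(*)$ — has all been assembled before the statement, so no substantial obstacle remains. The only point genuinely requiring care is the index-matching: one must keep straight that in $a_{ij}=q_{ij}-q_{ji}$ the outgoing count $q_{ij}$ is the one that feeds $p_i^+$ while the incoming count $q_{ji}$ feeds $p_i^-$, and that it is precisely the symmetry $C=C^T$ that lets the combination $\sum_j a_{ij}\,p_j$ of \emph{columns} be read off as the $i$-th \emph{row} of $\Adj_Q\cdot C$. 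Getting these conventions consistent is the whole of the work.
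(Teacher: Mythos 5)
Your proposal is correct and follows essentially the same route as the paper: both arguments reduce $\Adj_Q\cdot C=0$ to the identities $p_i^+=p_i^-$ of $(p)$, using the symmetry of $C$ to identify its rows with the dimension vectors $p_k$ of the projectives; your vectorized formulation $\sum_j a_{ij}p_j=p_i^+-p_i^-=0$ is just the entrywise computation $(c)$ of the paper assembled into one line. The index conventions you flag (that $q_{ij}$ feeds $p_i^+$ and $q_{ji}$ feeds $p_i^-$) are handled consistently, and your derivation of $(p)$ from the alternating sum along the exact sequence $(*)$ matches the paper's earlier discussion.
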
 

\begin{proof} This is straightforward from the identities: $p_i^-=p_i^+$, for $i\in Q_0$. Indeed, for a fixed vertex 
$i\in Q_0$, we have 
$$p_i^-=p_i^+ \ \Leftrightarrow \ \sum_{\alpha\in i^-}p_{s(\alpha)}=\sum_{\alpha\in i^+}p_{t(\alpha)} \ 
\Leftrightarrow \ \sum_{j\to i}p_{j}=\sum_{i\to j}p_j,$$ 
where the last sum is taken over all arrows ending at $i$ (respectively, starting at $i$). Note that the same $j$ 
can appear multiple times, if there is more one arrow $j\to i$ (or $i\to j$). Comparing both sides at $k$-th 
row, we obtain 
$$\sum_{j\to i}c_{kj}=\sum_{i\to j}c_{kj},\hfill\leqno{(c)}$$ 
for any $k\in\{1,\dots,n\}$. Consequently, if $x=[c_{k1} \ \dots c_{kn}]$ is the $i$-th row of the Cartan matrix 
$C=C_\Lambda$, then $(c)$ is equivalent to $a_i\cdot x^T=0$, where 
$$a_i=\sum_{i\to j}e_j-\sum_{j\to i}e_j$$ 
(here $e_j=s_j^T$ denotes the $j$-th vector $[0 \ 0 \dots \ 1 \ \dots 0]$ of the standard basis of $\mathbb{Z}^n$). 
Note that $C$ is symmetric, so $x^T=p_k$, and moreover, $a_i$ is by definition the $i$-th row of the adjacency 
matrix $\Adj_Q$. As a result, we have $\Adj_Q\cdot p_k=0$, for each $k\in\{1,\dots,n\}$, and hence, we get $\Adj_Q\cdot C=0$, 
as required. \end{proof} 

\medskip 

Main idea of this paper is to consider the above identity as a matrix equation 
$$\leqno{(\Delta)} \qquad\qquad\qquad\qquad\qquad\qquad\qquad\qquad\qquad AC=0$$ 
with given $A=\Adj_Q$ and $C$ treated as an unknown (matrix) variable. We aim to use the equation as a tool 
for classifying Gabriel quivers (and possibly Cartan matrices) of tame symmetric periodic algebras of period 4, or 
more generally, algebras of generalized quaternion type. \medskip 

One question is to find all symmetric matrices $C$ with natural coefficients, if $A$ is a given skew-symmetric matrix 
(then clearly, $A$ is of the form $A=\Adj_Q$, for some quiver $Q$). This is more or less, a question of finding natural 
solutions of a system of linear diophantine equations. Existence of $C$ in general is not yet explained in terms of $A$, 
but for preset purposes it is not needed. \smallskip 

More elaborate problem is to answer what are the conditions on a skew-symmetric matrix $A$, under which ($\Delta$) 
has solutions being Cartan matrices of (tame) symmetric periodic algebras of period $4$ (or GQT algebras in general)? 
Related question is to decide whether a given solution of ($\Delta$) is Cartan matrix of a GQT algebra. \medskip 

By definition, each algebra of generalized quaternion type $\Lambda=KQ/I$ induces a solution $C=C^T$ of ($\Delta$), 
where $C$ is the Cartan matrix of $\Lambda$ and $A$ is its shadow $A=\Adj_Q$. In other words, solutions of ($\Delta$) 
contain all information on Cartan matrices of GQT-algebras (and additionally, on adjacency matrices of their Gabriel 
quivers). Even more, we do not need to restrict ourselves only to infinite (or tame) representation type, and we can 
consider shadows $\Adj_{Q_\La}$, for any symmetric algebra $\Lambda$ with all simple modules periodic of 
period $4$ (this class includes all tame symmetric periodic algebras of period $4$, as well as all GQT algebras). 
It is natural to expect that studying equation $(\Delta)$ will give us substantial insights into the structure 
of GQT algebras. \medskip 

In the remaining part of this section we introduce some notation useful in describing quivers $Q$ with 
given adjacency matrix $\Adj_Q=A$. Roughly speaking, we will present a convinient way to separate the 
essential part of the quiver from $2$-cycles. However, we discuss here only the general shape of $Q$. More 
details will arive in Sections \ref{sec:3} and \ref{sec:4}. Actually, in Section \ref{sec:4}, we will prove 
that the position of $2$-cycles in $Q=Q_\Lambda$ is rather restricted (see also the Main Theorem). \medskip 

Let $Q=(Q_0,Q_1)$ be a quiver. We will call $Q$ a {\it reduced} quiver, if it does not contain loops nor $2$-cycles, 
i.e. pairs of arrows $\xymatrix{i\ar@<-0.4ex>[r] & j \ar@<-0.4ex>[l]}$ in $Q$. For any quiver, there is a 
unique reduced subquiver $Q^\times \subseteq Q$ (up a choice of $2$-cycles), and $Q^\times$ is obtained from $Q$ by 
deleting all loops and $2$-cycles. Obviously, $Q$ is reduced if and only if $Q=Q^\times$ and $Q^\times$ is always 
reduced. \smallskip 

There is an equivalence relation $\equiv$ on the set of all quivers, for which $Q \equiv Q'$ if and only if 
$Q$ and $Q'$ differ only by loops and $2$-cycles (equivalently, $Q$ is obtained from $Q'$ by adding or deleting 
loops and $2$-cycles). In this setup, $Q^\times$ is the smallest quiver (with respect to number of arrows) in the 
equivalence class of $Q$ modulo $\equiv$. \smallskip

For two quivers $Q,Q'$ living on a common set of vertices $X=Q_0=Q_0'$, we define their disjoint union as follows  
$$Q\sqcup Q'=(X,Q_1\sqcup Q_1'),$$
where $Q_1\sqcup Q_1'$ denotes the usual disjoint union of sets (of arrows), and the functions $s,t$ on the disjoint 
sum are easily induced from the functions in $Q_1$ and $Q_1'$. \medskip

Now, recall that every skew-symmetric matrix $A=[a_{ij}]\in\bM_n(\bZ)$ represents an equivalence class under the 
relation $\equiv$. Namely, we have a unique decomposition $A=A^++A^-$, where $A^+\geqslant 0$ and $A^-\leqslant 0$ 
(that is, all entries are $\geqslant 0$ or $\leqslant 0$, respectively). Then, there is a unique reduced quiver 
$\bfQ_A$ with $\Arr_{\bfQ_A}=A^+$, i.e. the number of arrows $i\to j$ in $\bfQ_A$ is $a_{ij}$, if $a_{ij}\geqslant 0$, 
and $0$, otherwise. Clearly, the adjacency matrix of $\bfQ_A$ is $\Adj_{\bfQ_A}=A$, and $Q\equiv Q'$ if and only if 
$\Adj_Q=\Adj_{Q'}$. In particular, we conclude that every $Q$ with $\Adj_Q=A$ is obtained from $\bfQ_A=Q^\times$ 
by adding $2$-cycles, and possibly loops. \smallskip 

We would like to separate the case of loops, so we consider also the {\it loop-free part} of $Q$, which is 
the subquiver $Q^\circ$ of $Q$, obtained from $Q$ by deleting all loops. \medskip 

By a {\it graph}, we mean any triple $(G_0,G_1,v)$, where $G_0$ and $G_1$ are the sets of vertices and edges, 
respectively, and $e:G_1\to P_2(G_0)$ is a function attaching to each edge $e\in G_1$ the set $v(e)\in P_2(G_0)$ 
of its {\it ends}, where $P_2(X)$ is a set of all subsets $S\subseteq X$ with exactly $|S|=2$ elements. This 
allows multpile edges (just for generality), but no loops in $G$. We will usually write $(G_0,G_1)$ (or just $G$), 
omitting the function $v$. \medskip 

The graph is called {\it simple}, if there is at most one edge $e\in G_1$ with $v(e)=\{i,j\}$, for all vertices 
$i\neq j$ in $G_0$. Except one case of small graphs, in this paper we will only work with simple graphs. \medskip 

For any graph $G$, we consider its associated {\it double quiver } $\overline{G}=(Q_0,Q_1)$, which is a quiver 
with the same set $Q_0=G_0$ of vertices as $G$, and arrows in $Q_1$ are induced from edges $e\in G_1$, in such 
a way that each edge with $v(e)=\{i,j\}$ induces a $2$-cycle $\xymatrix{i \ar@<-0.4ex>[r] & \ar@<-0.4ex>[l] j}$ 
in $Q_1$. In other words, $\overline{G}$ is obtained from $G$ by replacing each edge by a $2$-cycle. \medskip 

Summing up, we may conclude that for any skew-symmetric matrix $A$, there exists a reduced quiver 
$\bbQ=\bfQ_A$, such that every quiver $Q$ with $\Adj_Q=A$, is given (up to loops) by 
$$Q^\circ=\bbQ\sqcup\overline{G}.$$ 
for some graph $G$. \medskip 

Finally, let us introduce an auxiliary notion of a {\it block}, which will be helpful in compact statements 
of later results. This notion is a natural generalization of {\it blocks} appearing in \cite{WSA3,VM,GWSA} 
(see also \cite{EHS2}), from which Gabriel quivers of many GQT algebras are glued. Blocks contain {\it black} 
and {\it white} vertices, and the white ones (called sometimes {\it outlets}) are the 'glueing' points with 
the remaining part of the quiver. More specifically, by a {\it block} in $Q$ we mean a subquiver $\Gamma=(\Gamma_0,\Gamma_1)$ 
of $Q$, whose vertex set $\Gamma_0$ is a disjoint union $\Gamma_0=B\cup W$ such that every arrow $\alpha:i\to j$ in $Q$ 
with $i$ or $j$ in $B$ satisfies $\alpha\in\Gamma_1$. \smallskip 

In other words, if $\Gamma$ is a block, then all arrows with source or target in $B$ are in $\Gamma_1$. 
In particular, any other arrow $\alpha:i\to j$ in $Q_1\setminus \Gamma_1$ either connects two vertices 
$i,j\in Q_0\setminus \Gamma_0$ or a vertex $Q_0\setminus \Gamma_0$ with an outlet in $W$. Consequently, $Q$ 
can be viewed as a glueing of $\Gamma$ with the rest part of the quiver via its outlets. In the language of 
associated adjacency matrix, it means that $a_{ij}=a_{ji}=0$, for any $i\notin\Gamma_0$ and $j\in B$, and 
hence, matrix $\Adj_Q$ has the following quasi-block form  
$$\Adj_Q=\left[\begin{array}{cc} \Adj_\Gamma & X \\ -X^T & * \end{array}\right],$$ 
where the first $b=|B|$ rows of $X$ (resp. columns of $-X^T$) corresponding to black vertices are zero. \smallskip 

For example, suppose that $Q$ admits the following block $\Gamma$: 
$$\xymatrix@R=0.4cm{ &\bullet^a \ar[rdd] & \\ & \bullet^b\ar[rd]& \\ 
\bullet^c \ar[ru] \ar[ruu] \ar@<-0.4ex>[rr]&& \bullet^d \ar[ld] \ar@<-0.4ex>[ll]\\ & \circ_e \ar[lu] & }$$ 
It follows that all the arrows with source or target in $\{a,b,c,d\}=B$ are visible in the picture, 
and we have exactly one outlet in $W=\{e\}$ (black vertives are usually denoted by $\bullet$, while 
white by $\circ$). In particular, then $a$ and $b$ are $1$-vertices in $Q$, $c$ is a $(2,3)$-vertex, and $d$ is 
a $(3,2)$-vertex in $Q$. Moreover, all other arrows in $Q_1\setminus \Gamma_1$, are either connecting vertices in 
$Q_0\setminus \Gamma_0$, or a vertex in $Q_0\setminus B$ with the unique outlet $e$. As a result, $Q$ is a 
'glueing' of the above block with the rest part of the quiver via the unique outlet $e$. 

\bigskip 

\section{Tame symmetric algebras of period four}\label{sec:2+} 

In this short section we recall the most important properties of tame symmetric algebras of period four 
needed in furter considerations. Some of the basic background was given in Section \ref{sec:2}, mostly 
concerning exact sequences coming from periodic simples. Here we will focus more on the structure of 
their Gabriel quivers and general behaviour of relations. This is based on a recent paper \cite{EHS1}. \medskip 

We start with two observations concerning infinite type \cite[see Lemmas 2.1 and 2.4]{EHS1}. For 
a natural vector $x\in\bN^n$, we denote by $|x|$ the sum $x_1+\dots+x_n$ of its coordinates. 

\begin{lemma}\label{lem:3.1} If $\La$ is of infinite type, then $|\hat{p}_i|>|p_i|$, for any vertex $i\in Q_0$, 
where $\hat{p}_i$ denotes the vector $P_i^-=p_i^+$ corresponding to vertex $i$. \end{lemma}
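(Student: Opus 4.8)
The plan is to reduce the stated inequality to a lower bound on the second syzygy of $S_i$, and then to exclude the single degenerate case by analysing the local shape of $Q$ around $i$. Since $P_i^+$ is a projective cover of $\Omega(S_i)=\rad P_i$ (as recalled just before the statement), there is a short exact sequence
$$0\to \Omega^2(S_i)\to P_i^+\to \rad P_i\to 0.$$
Comparing total dimensions, using $|\hat{p}_i|=\dim_K P_i^+$ and $\dim_K\rad P_i=|p_i|-1$, one obtains the identity
$$|\hat{p}_i|-|p_i|=\dim_K\Omega^2(S_i)-1.$$
Because $S_i$ has period $4$, the module $\Omega^2(S_i)$ is nonzero, so $\dim_K\Omega^2(S_i)\geqslant 1$ always holds (this already gives $|\hat{p}_i|\geqslant|p_i|$ with no assumption on representation type). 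The entire content of the lemma is therefore to upgrade this to the strict inequality, i.e. to rule out that $\Omega^2(S_i)$ is simple.

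The decisive step is to prove the contrapositive: if $\Omega^2(S_i)\cong S_j$ is simple for some $i$, then $\La$ is of finite representation type. Assuming $\Omega^2(S_i)\cong S_j$, its projective cover $P_i^-$ equals $P_j$; since $P_i^-=\bigoplus_{s}P_{x_s}$ runs over the sources of arrows into $i$, Krull--Schmidt forces exactly one arrow into $i$, with source $j$. By the period-$4$ hypothesis $\Omega^2(S_j)\cong\Omega^4(S_i)\cong S_i$ is again simple, so the same argument yields exactly one arrow into $j$, with source $i$. Now the identity $(p)$, namely $p_i^+=p_i^-=p_j$, gives $\dim_K P_i^+=\dim_K P_j$; as the arrow $i\to j$ makes $P_j$ a direct summand of $P_i^+$, the equality of dimensions forces $P_i^+=P_j$, so $i$ has exactly one outgoing arrow as well. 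The symmetric conclusion holds at $j$, whence every vertex in the component of $i$ has in-degree and out-degree $1$, and by connectedness $Q$ is the $2$-cycle on $\{i,j\}$ with $i\neq j$ (the case $i=j$ would give period $2$).

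It then remains to note that on this quiver $\La$ must be a Nakayama algebra: since each vertex emits a single arrow, every layer $\rad^m P/\rad^{m+1}P$ is at most one-dimensional, so all indecomposable projectives — and hence all indecomposable injectives, as $\La$ is symmetric — are uniserial. A symmetric Nakayama algebra is of finite representation type, contradicting the hypothesis, and so $\dim_K\Omega^2(S_i)\geqslant 2$, which by the displayed identity yields $|\hat{p}_i|>|p_i|$. I expect the last structural passage — converting the numerical collapse $\dim_K\Omega^2(S_i)=1$ into the rigid conclusion that $Q$ is a $2$-cycle with uniserial projectives — to be the main obstacle; the dimension bookkeeping of the first paragraph is routine, and the periodicity input enters only through the identities $(p)$ and the symmetry $\Omega^2(S_i)\cong\Omega^{-2}(S_i)$ already available from the exact sequence $(*)$.
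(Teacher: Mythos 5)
The paper does not actually prove this lemma: it is imported from \cite[Lemma 2.1]{EHS1} without an in-text argument, so there is nothing internal to compare against. Your proof is correct and is the natural one. The dimension count $|\hat{p}_i|-|p_i|=\dim_K\Omega^2(S_i)-1$ reduces the claim to showing that $\Omega^2(S_i)$ is not simple, and your contrapositive correctly collapses the case $\Omega^2(S_i)\cong S_j$ to a symmetric Nakayama algebra on a $2$-cycle, which is representation-finite. The one step you use silently is that $P_i^-$ is the \emph{projective cover} of $\Omega^2(S_i)$, not merely a projective mapping onto it; this is what forces $P_i^-\cong P_j$ on the nose and hence \emph{exactly} one arrow into $i$, rather than just the existence of some arrow $j\to i$. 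It holds because, over a self-injective algebra, the minimal injective copresentation of $S_i$ continues a minimal projective resolution of $\Omega^{-2}(S_i)\cong\Omega^2(S_i)$ (equivalently, $\Omega\Omega^{-1}\cong\mathrm{id}$ when computed with minimal covers and envelopes, which uses that $\Omega^{-1}(S_i)$ has no injective summands). With that standard fact made explicit, the argument is complete.
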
 

\begin{lemma}\label{lem:3.2} If $\La$ is of infinite type then there is no arrow $\alpha: i\to j$ with $i^+ = \{ \alpha\} = j^-$. 
\end{lemma}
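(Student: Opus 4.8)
The plan is to argue by contradiction, using the dimension-vector identity $(p)$ together with Lemma \ref{lem:3.1}. Suppose that $Q$ contains an arrow $\alpha\colon i\to j$ with $i^+=\{\alpha\}=j^-$. The first step is to read off what the two single-arrow conditions say about the projective modules attached to $i$ and $j$ in the exact sequence $(*)$. Since $\alpha$ is the only arrow leaving $i$, the projective cover of $\Omega(S_i)=\rad P_i$ is simply $P_i^+=P_j$; dually, since $\alpha$ is the only arrow entering $j$, the injective envelope of $\Omega^{-1}(S_j)$ is $P_j^-=P_i$. Passing to dimension vectors and using the notation $\hat{p}_i=p_i^+=p_i^-$ (which is legitimate by $(p)$, as all simples have period $4$), this gives the two identifications $\hat{p}_i=p_j$ and $\hat{p}_j=p_i$.

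Next I would feed these into Lemma \ref{lem:3.1}. Because $\La$ is of infinite representation type, that lemma yields $|\hat{p}_i|>|p_i|$ for every vertex. Applying it at $i$ gives $|p_j|=|\hat{p}_i|>|p_i|$, while applying it at $j$ gives $|p_i|=|\hat{p}_j|>|p_j|$. Chaining these two inequalities produces $|p_i|>|p_j|>|p_i|$, an impossibility. Hence no such arrow $\alpha$ can exist, which is exactly the assertion.

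The only step requiring genuine care is the translation in the first paragraph: one must check that the hypotheses $i^+=\{\alpha\}$ and $j^-=\{\alpha\}$ really force $P_i^+=P_j$ and $P_j^-=P_i$ with no hidden multiplicities. This is immediate from the description of $P_i^{\pm}$ as direct sums indexed, respectively, by the arrows out of $i$ and into $i$ (applied at $i$ and at $j$): a single outgoing arrow $i\to j$ contributes exactly one summand $P_j$ to $P_i^+$, and the same arrow, viewed as the unique arrow entering $j$, contributes exactly one summand $P_i$ to $P_j^-$. Once this identification is in place, the argument is a two-line application of Lemma \ref{lem:3.1}, so I do not anticipate any serious obstacle; the substance of the statement is really carried by Lemma \ref{lem:3.1} itself.
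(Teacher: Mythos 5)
Your proof is correct. The paper does not reprove this lemma (it is quoted from \cite{EHS1} together with Lemma \ref{lem:3.1}), but your argument---translating $i^+=\{\alpha\}=j^-$ into $\hat{p}_i=p_j$ and $\hat{p}_j=p_i$ via the identity $(p)$, then applying Lemma \ref{lem:3.1} at both $i$ and $j$ to get the contradiction $|p_i|>|p_j|>|p_i|$---is exactly the intended argument that the paper's setup points to.
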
 

The following result gives a useful tool for constructing triangles in $Q$ induced from relations defining 
$\La=KQ/I$ \cite[see Proposition 4.1]{EHS1}. 

\begin{lemma}\label{lem:3.3} Assume $\alpha: i\to j$ and $\beta: j\to k$ are arrows such that $\alpha\beta \prec I$. Then there
is an arrow in $Q$ from $k$ to $i$, so that  $\alpha$ and $\beta$ are part of a triangle in $Q$. \end{lemma}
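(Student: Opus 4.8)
The plan is to read the relation $\alpha\beta$ through the minimal projective resolution of $S_i$ and to exploit the fact that, for an algebra with simple modules of period $4$, the second syzygy $\Omega^2(S_i)$ has its projective cover controlled by the \emph{arrows ending at} $i$. First I would translate the hypothesis $\alpha\beta\prec I$ into homological language: since $\alpha\beta\prec I$, the path $\alpha\beta$ is a summand of one of the fixed minimal relations $\rho$ generating $I$, and as $\alpha\beta$ runs from $i$ to $k$, the relation $\rho$ has source $i$ and target $k$. By the standard correspondence between minimal relations and second extensions, a minimal relation from $i$ to $k$ witnesses $\operatorname{Ext}^2_\La(S_i,S_k)\neq 0$, equivalently that $S_k$ occurs in the top of $\Omega^2(S_i)$, i.e.\ that $P_k$ occurs as a summand of the second term $Q_2$ of the minimal projective resolution of $S_i$.

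The second step is to identify $Q_2$. Reading the exact sequence $(*)$ from the right as the start of the minimal projective resolution of $S_i$, the term two steps further than $P_i$ is exactly $P_i^-=\bigoplus_{x\to i}P_x$, the sum being taken over the arrows $x\to i$ ending at $i$. Hence $\operatorname{top}(\Omega^2(S_i))=\operatorname{top}(P_i^-)=\bigoplus_{x\to i}S_x$. Combining with the first step, the simple $S_k$ can occur in this top only if $k$ is the source of some arrow ending at $i$; that is, there must be an arrow $k\to i$ in $Q$. Together with $\alpha\colon i\to j$ and $\beta\colon j\to k$ this produces the desired triangle $i\to j\to k\to i$.

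The step that requires genuine care — and where period $4$ is indispensable — is justifying that $(*)$ really computes the minimal projective resolution in degree two, i.e.\ that $P_i^-$ is the projective \emph{cover} of $\Omega^2(S_i)$, not merely a projective mapping onto it. Here I would use $\Omega^2(S_i)\cong\Omega^{-2}(S_i)$, so that $\Omega^2(S_i)$ is the cokernel of the injective envelope $\Omega^{-1}(S_i)\hookrightarrow P_i^-$ recorded before $(*)$; it then suffices to check that this copy of $\Omega^{-1}(S_i)$ lies inside $\rad P_i^-$. This is the main obstacle: I would argue that if the composite $\Omega^{-1}(S_i)\hookrightarrow P_i^-\twoheadrightarrow\operatorname{top}(P_i^-)$ were nonzero, then by projectivity a summand $P_y$ of $P_i^-$ would split off $\Omega^{-1}(S_i)$, contradicting that the cosyzygy of a simple module over a non-simple selfinjective algebra has no projective summands. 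Once this is settled, $P_i^-$ is the projective cover of $\Omega^2(S_i)$, the identity $\operatorname{top}(\Omega^2(S_i))=\bigoplus_{x\to i}S_x$ holds, and the argument above closes.

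Finally I would record the routine bookkeeping point deferred above: that the minimal generator of $\Omega^2(S_i)=\ker d_1\subseteq P_i^+=\bigoplus_s P_{j_s}$ attached to $\rho$ is nonzero in the top and lives at vertex $k$. I would obtain it by writing $\rho=\sum_t\lambda_t\alpha_{s(t)}w_t'$ with each $w_t'$ a path from $j_{s(t)}$ to $k$, so that the tuple $\xi=\bigl(\sum_{s(t)=s}\lambda_t w_t'\bigr)_s$ lies in $\ker d_1$ and satisfies $\xi e_k=\xi$; minimality of $\rho$ as a generator of $I$ forces $\xi\notin\rad\Omega^2(S_i)$, so its class in $\operatorname{top}(\Omega^2(S_i))$ is a nonzero element of the $S_k$-isotypic part, which is exactly what the preceding paragraph requires.
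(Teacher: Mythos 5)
Your argument is correct, but note that the paper does not actually prove Lemma~\ref{lem:3.3}: it is imported verbatim from \cite[Proposition 4.1]{EHS1}, so there is no in-paper proof to compare against. What you have written is essentially the standard homological argument that underlies the cited result: a minimal relation starting at $i$ and ending at $k$ contributes a copy of $S_k$ to $\operatorname{top}\Omega^2(S_i)$, while the $4$-periodicity sequence $(*)$ identifies the projective cover of $\Omega^2(S_i)$ with $P_i^- = \bigoplus_{x\to i}P_x$, whose top only involves simples at sources of arrows ending at $i$. You correctly isolated the two points that genuinely need checking: (i) that $P_i^-$ is a projective \emph{cover} of $\Omega^2(S_i)\cong\Omega^{-2}(S_i)$, which you settle by observing that a cosyzygy over a (non-semisimple) selfinjective algebra has no injective direct summands, so $\Omega^{-1}(S_i)\subseteq\rad P_i^-$; and (ii) that the element $\xi\in\ker d_1$ manufactured from the minimal relation $\rho$ survives in $\operatorname{top}(\ker d_1)$ at vertex $k$ --- this is the classical correspondence between minimal relations and $\operatorname{Ext}^2_\La(S_i,S_k)$, and your sketch of it is adequate. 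So the proposal is a complete and self-contained proof of a statement the paper merely cites; the only caveat is that point (ii) deserves one more line in a final write-up (namely, that $\xi\in(\ker d_1)J$ would express $\rho$ modulo $R_QI+IR_Q$ in terms of the other generators, contradicting minimality of the chosen generating set of $I$).
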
 

The following result (sometimes called the {\it Triangle Lemma} \cite[Lemma 4.3]{EHS1}) shows how relations 
propagate through triangles in $Q$. 

\begin{lemma}\label{lem:3.4}  Assume $Q$ contains  a triangle
        \[
 \xymatrix@R=3.pc@C=1.8pc{
    x
    \ar[rr]^{\gamma}
    && i
    \ar@<.35ex>[ld]^{\alpha}
    \\
    & j
    \ar@<.35ex>[lu]^{\beta}
  }
\]
with $\alpha\beta\prec I$. If $\gamma$ is the unique arrow $x\to i$, then $\gamma\alpha\prec I$ and 
$\beta\gamma\prec I$. If we have double arrows $\gamma,\bar{\gamma} x\to i$, then there is one 
$\delta\in\{\gamma,\bar{\gamma}\}$ such that $\delta\alpha\prec I$ and $\beta\delta\prec I$. \end{lemma}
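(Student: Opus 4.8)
The plan is to convert the three membership statements ``$\alpha\beta\prec I$'', ``$\gamma\alpha\prec I$'', ``$\beta\gamma\prec I$'' into a single condition on the triangle that is manifestly invariant under the cyclic rotation $x\to i\to j\to x$, and then read off all three from the hypothesis. The bridge between relations and homological data is the exact sequence $(*)$: for any vertex $v$, the minimal relations starting at $v$ are exactly the entries of the differential $d_2$ attached to $v$. Writing $P_v^-=\bigoplus_{\delta\colon u\to v}P_u$ and $P_v^+=\bigoplus_{\eta\colon v\to w}P_w$, the map $d_2\colon P_v^-\to P_v^+$ is a matrix whose $(\eta,\delta)$-entry $\rho_{\delta,\eta}\in e_w\La e_u$ is a combination of paths $w\to u$, and $d_1d_2=0$ says that $\sum_\eta \eta\,\rho_{\delta,\eta}=0$ is the minimal relation from $v$ to $u$ indexed by the incoming arrow $\delta$. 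Hence a length-two path $\eta\,p\prec I$ (with $\eta$ the first arrow, $p$ ending at $s(\delta)$) precisely when $p$ is a summand of some such entry $\rho_{\delta,\eta}$.

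Applying this reformulation at the three vertices turns the statement into a purely combinatorial claim. The hypothesis gives an incoming arrow $\gamma\colon x\to i$ with $\beta\prec\rho^{(i)}_{\gamma,\alpha}$ (the $d_2$-entry at $i$, a combination of paths $j\to x$), while the two conclusions are $\alpha\prec\rho^{(x)}_{\beta,\gamma}$ (the entry at $x$, paths $i\to j$) and $\gamma\prec\rho^{(j)}_{\alpha,\beta}$ (the entry at $j$, paths $x\to i$). All three entries are pinned to the same triple $(\alpha,\beta,\gamma)$ of the triangle, only cyclically permuted, which is what suggests a rotation-invariant description.

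The engine producing the rotation symmetry is that $\La$ is symmetric. I would use the symmetrizing (cyclic) form $\psi$, for which $\psi(abc)=\psi(bca)=\psi(cab)$ and which detects the one-dimensional socles $\soc P_v$, together with the self-duality of the period-four sequence $(*)$ (the Nakayama functor is the identity, so $d_2$ is self-dual while $d_1,d_3$ are mutually dual). The goal of this step is the key equivalence: $\beta\prec\rho^{(i)}_{\gamma,\alpha}$ holds if and only if the cyclic product $\alpha\beta\gamma\in e_i\La e_i$ spans $\soc P_i$, i.e. $\psi(\alpha\beta\gamma)\neq0$; the self-duality of $d_2$ is exactly what matches the appearance of $\beta$ in the entry with the completion of $\alpha\beta$ to a socle element by the dual arrow $\gamma$. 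Granting this equivalence simultaneously at all three vertices, cyclicity of $\psi$ gives $\psi(\alpha\beta\gamma)=\psi(\gamma\alpha\beta)=\psi(\beta\gamma\alpha)$, so the membership rotates freely and both conclusions follow at once.

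Finally, the single versus double arrow distinction falls out of this criterion. When $x\to i$ is the unique arrow $\gamma$, the hypothesis forces $\psi(\alpha\beta\gamma)\neq0$ for that $\gamma$, and rotation yields both $\gamma\alpha\prec I$ and $\beta\gamma\prec I$. When there are two arrows $\gamma,\bar\gamma\colon x\to i$, the hypothesis only provides some $\delta\in\{\gamma,\bar\gamma\}$ with $\psi(\alpha\beta\delta)\neq0$, and since the socle-completion is detected by that same $\delta$, it is exactly this $\delta$ for which $\delta\alpha\prec I$ and $\beta\delta\prec I$. I expect the main obstacle to be the key equivalence of the third step: translating ``$\beta$ is a summand of the $d_2$-entry'' into ``the cyclic product lies in the socle'' requires tracking the identifications in the self-duality of $(*)$ (the form identifying each $P_w$ with its dual, and the reversal $\La\cong\La^{\mathrm{op}}$ sending a path to its opposite) and checking that no cancellation among the several paths occurring in an entry destroys detection by $\psi$. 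Lemma \ref{lem:3.3} is used only to guarantee at the outset that the closing arrow $\gamma$ (or $\bar\gamma$) exists, so that the triangle and the relevant $d_2$-entries are present.
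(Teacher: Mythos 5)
The paper does not actually prove this statement: Lemma \ref{lem:3.4} is quoted verbatim from \cite[Lemma 4.3]{EHS1} (the ``Triangle Lemma''), so the only comparison available is with the cited proof, which works directly with the exactness of the sequence $(*)$ at $P_i^-$ and $P_i^+$ (i.e.\ with the identities $d_1d_2=0$ and $d_2d_3=0$ and the minimality of the resolution), not with a socle criterion. Your reduction of the statement to a cyclically invariant condition is an attractive heuristic, and your description of where minimal relations live (entries of $d_2$, indexed by arrows ending at the vertex) is correct and is indeed the right starting point.

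However, there is a genuine gap, and it sits exactly where you flag it: the ``key equivalence'' asserting that $\beta\prec\rho^{(i)}_{\gamma,\alpha}$ if and only if $\alpha\beta\gamma$ spans $\soc P_i$ is never proved, and it is not a routine consequence of symmetry plus periodicity --- it is a statement strictly stronger than the lemma itself. If you had it, Lemma \ref{lem:3.3} would follow instantly, as would nonvanishing of $\alpha\beta$ whenever $\alpha\beta\prec I$ and essential uniqueness of the closing arrow; results of this strength are exactly what \cite{EHS1,EHS2} work hard to establish, partly \emph{using} the Triangle Lemma, so invoking them here risks circularity. Proving the equivalence would in any case force you through the same computations ($d_2d_3=0$ applied to the resolution of $S_i$ produces an element of $I$ from $j$ to $i$ of the form $c\,\beta\gamma+\cdots$, etc.) that yield the lemma directly, so the ``rotation-invariant'' reformulation does not actually save work. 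Two further points need care even granting the criterion: (i) $\psi(w)\neq 0$ is not equivalent to $w$ spanning $\soc P_i$ for an arbitrary symmetrizing form (a nondegenerate symmetric form need not vanish on a complement of the socle in $e_i\La e_i$), so the form must be normalized; and (ii) in the double-arrow case the indexing of minimal relations by arrows $\gamma,\bar\gamma\colon x\to i$ is only well defined up to a base change in the two-dimensional space they span, so matching ``the $\delta$ indexing the relation'' with ``the $\delta$ for which $\psi(\alpha\beta\delta)\neq 0$'' requires tracking that identification explicitly --- this is precisely the subtlety the lemma's second sentence is designed to handle.
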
 

We have also the following result on the neighbours of $1$-vertices \cite[see Lemma 4.4]{EHS1}. 

\begin{lemma}\label{lem:3.5} Assume $i$ is a 1-vertex which is part of a triangle 

\[
  \xymatrix@R=3.pc@C=1.8pc{
    & i
    \ar[rd]^{\alpha}
    \\
  x 
    \ar[ru]^{\gamma}
    && j
   \ar[ll]_{\beta}
  }
\]

Then both $|x^-|\geqslant 2$ and $|j^+|\geqslant 2$. In particular, if $\La$ is of infinite type, 
then $x,j$ are at least $2$-vertices. \end{lemma}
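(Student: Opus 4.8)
The plan is to isolate the single inequality $|j^+|\geqslant 2$ and derive it from the Loewy structure of $P_i$, everything else then following by duality and by Lemma \ref{lem:3.2}. Since $i$ is a $1$-vertex we have $i^-=\{\gamma\}$ and $i^+=\{\alpha\}$, so the resolution $(*)$ of $S_i$ reads $0\to S_i\to P_i\to P_x\to P_j\to P_i\to S_i\to 0$, with $P_i^+=P_j$ and $P_i^-=P_x$. First I would observe that $\La^{\op}$ is again symmetric with all simples of period $4$ and of the same representation type, and that passing to it reverses all arrows, keeps $i$ a $1$-vertex, and turns the given triangle into $j\to i\to x\to j$; under this duality the two assertions $|x^-|\geqslant 2$ and $|j^+|\geqslant 2$ are interchanged. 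Hence it suffices to prove $|j^+|\geqslant 2$, the bound $|x^-|\geqslant 2$ then being exactly the same statement for $\La^{\op}$.

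To prove $|j^+|\geqslant 2$ I argue by contradiction and assume $j^+=\{\beta\}$, so $\beta\colon j\to x$ is the only arrow out of $j$. The heart of the matter is to produce the relation $\alpha\beta=0$. I would first use that the triangle carries a minimal relation on one of its sides — this is the mechanism by which triangles arise here (Lemma \ref{lem:3.3}) — and then invoke the Triangle Lemma \ref{lem:3.4}: since $i$ is a $1$-vertex the arrows $\gamma,\alpha$ at $i$ are unique, and under the present assumption $\beta$ is unique as well, so in the appropriate rotation (input at the corner $i$ using that $\beta$ is unique, or at $x$ using that $\alpha$ is unique) Lemma \ref{lem:3.4} propagates that relation to $\alpha\beta\prec I$. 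Now the rigidity of the unique out-arrows takes over: any path $i\to x$ of length $\geqslant 2$ must start with $\alpha$ and then with $\beta$, so every summand of the minimal relation witnessing $\alpha\beta\prec I$ has the shape $\alpha\beta\,w$ with $w\colon x\to x$. Factoring $\alpha\beta$ out on the left turns the relation into $\alpha\beta\,u=0$, where $u\in e_x\La e_x$ equals $\lambda\,e_x$ plus a radical term (the summand $e_x$ comes from $\alpha\beta$ itself, $\lambda\neq 0$) and is therefore invertible in the local ring $e_x\La e_x$; hence $\alpha\beta=0$.

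Once $\alpha\beta=0$ the contradiction is short. With $j^+=\{\beta\}$ we have $\rad P_j=\beta\La$, hence $\rad^2 P_i=\alpha\,\rad P_j=\alpha\beta\La=0$, so $P_i$ has Loewy length $2$ and $\rad P_i$ is semisimple, thus contained in $\soc P_i$. As $\La$ is symmetric, $\soc P_i=S_i$, so the nonzero module $\rad P_i$ is a submodule of $S_i$ and equals $S_i$; but $\rad P_i=\alpha\La$ has top $S_j$ (as $\alpha$ is the only arrow out of $i$), whence $S_i=S_j$ and $i=j$, which is impossible in a loop-free triangle. This proves $|j^+|\geqslant 2$, and dually $|x^-|\geqslant 2$. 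For the last clause assume moreover that $\La$ is of infinite type. Applying Lemma \ref{lem:3.2} to $\gamma\colon x\to i$, for which $i^-=\{\gamma\}$, excludes $x^+=\{\gamma\}$ and gives $|x^+|\geqslant 2$; applying it to $\alpha\colon i\to j$, for which $i^+=\{\alpha\}$, excludes $j^-=\{\alpha\}$ and gives $|j^-|\geqslant 2$. Combined with $|x^-|,|j^+|\geqslant 2$, this shows that $x$ and $j$ are at least $2$-vertices.

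The step I expect to be the genuine obstacle is securing the zero relation $\alpha\beta=0$. Purely homological input is circular: the relation $\alpha\rho\in I$ read off from $\Img d_2=\Omega^2(S_i)$ only forces its generator $\rho\colon j\to x$ to begin with $\beta$, and if $\alpha\beta\neq 0$ this $\rho$ may carry a long cyclic tail at $x$; likewise the socle path $z_i=\alpha\,w\,\gamma$ of $P_i$ is a priori consistent with $\alpha\beta\neq 0$. Thus one really must feed in the external relation on a side of the triangle and then control the possibility that it is buried inside a longer minimal relation — which is precisely what the factoring-out-a-unit step above is meant to handle. If the presence of that relation cannot be arranged directly, the fallback is a covering argument in the spirit of Section \ref{sec:2}: show that $|j^+|=1$ forces a wild subcategory around $j$ in a suitable Galois covering, contradicting tameness.
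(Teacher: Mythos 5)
The paper does not prove this lemma itself --- it is imported verbatim from \cite[Lemma 4.4]{EHS1} --- so there is no internal proof to compare against; I can only assess your argument on its own terms. Your reduction by duality to the single claim $|j^+|\geqslant 2$ is fine, and your endgame is correct: if $j^+=\{\beta\}$ and $\alpha\beta=0$, then $\rad^2P_i=\alpha\cdot e_jJ=\alpha\beta\La=0$, so $\rad P_i$ is a nonzero semisimple submodule of $\soc P_i=S_i$ with top $S_j$, forcing $i=j$. The problem is the middle step. You have no legitimate source for the relation $\alpha\beta\prec I$: the assertion that the triangle ``carries a minimal relation on one of its sides'' is the converse of Lemma \ref{lem:3.3}, which says that relations produce triangles, not that triangles come equipped with relations. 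Without a seed relation the Triangle Lemma \ref{lem:3.4} has nothing to propagate. The purely homological input yields only this: $\Omega^2(S_i)$ is the nonzero kernel of left multiplication by $\alpha$ on $P_j$, generated by an element $\rho=\beta w$ with $w\in e_x\La e_x$, whence $\alpha\beta w=0$; if $w$ were a unit of the local ring $e_x\La e_x$ you would be done, but nothing excludes $w\in e_xJe_x$, in which case the relation involves only paths of length $\geqslant 3$ and says nothing about the path $\alpha\beta$ itself. You flag this yourself in your final paragraph, and the proposed fallback (an unspecified covering argument) is not carried out. So the proof is genuinely incomplete at its central step.

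What makes the gap avoidable is that the lemma needs no relation-chasing at all, at least under the representation-infinite hypothesis in force throughout Section \ref{sec:4} (and in the ``in particular'' clause). Since $i$ is a $1$-vertex, the identity $(p)$ at $i$ reads $p_j=p_i^+=p_i^-=p_x$. If $j^+=\{\beta\}$ with $\beta:j\to x$, then $\hat{p}_j=p_j^+=p_x$, and Lemma \ref{lem:3.1} gives $|p_x|=|\hat{p}_j|>|p_j|=|p_x|$, a contradiction; dually, $x^-=\{\beta\}$ gives $|p_j|=|\hat{p}_x|>|p_x|=|p_j|$. Lemma \ref{lem:3.2} applied to $\gamma$ (with $i^-=\{\gamma\}$) and to $\alpha$ (with $i^+=\{\alpha\}$) then yields $|x^+|\geqslant 2$ and $|j^-|\geqslant 2$, exactly as in your last step. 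This short argument uses only the quoted Lemmas \ref{lem:3.1} and \ref{lem:3.2} together with $(p)$ and is surely the intended mechanism; your Loewy-length argument would be needed only if one insists on the first assertion for algebras of finite representation type, where Lemma \ref{lem:3.1} is unavailable --- and there it is exactly the unproved step $\alpha\beta=0$ that still blocks it.
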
 

Finally, the last two results (see \cite[Propositions 4.5 and 4.6]{EHS1} describe the properties of paths of length $3$ 
involved in minimal relations. 

\begin{lemma}\label{lem:3.6} Consider the path 
$$\xymatrix{i \ar[r]^{\alpha} & k \ar[r]^{\beta} & t \ar[r]^{\gamma} & j}$$ 
such that $\alpha\beta \not\prec I$ and $\alpha$ is the unique arrow $i\to k$. If $\alpha\beta\gamma \prec I$, then 
there is an arrow $j\to i$. \end{lemma}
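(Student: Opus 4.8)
The plan is to translate ``there is an arrow $j\to i$'' into a statement about the top of the second syzygy $\Omega^2(S_i)$, and then to detect the simple $S_j$ there from the relation containing $\alpha\beta\gamma$. Recall from the minimal exact sequence $(*)$ that $P_i^-=\bigoplus_{x\to i}P_x$ is a projective cover of $\Omega^2(S_i)=\ker d_1$; hence $\operatorname{top}\Omega^2(S_i)\cong\bigoplus_{x\to i}S_x$, so the multiplicity of $S_j$ in $\operatorname{top}\Omega^2(S_i)$ equals the number of arrows $j\to i$. Thus it suffices to show that $S_j$ occurs in $\operatorname{top}\Omega^2(S_i)$.

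To produce such an occurrence, I would fix a minimal relation $\rho=\sum_v\lambda_v v$ (a combination of paths $i\to j$) with $\lambda_{\alpha\beta\gamma}\neq0$, i.e. $\alpha\beta\gamma\prec\rho$. Sorting the paths of $\rho$ by their first arrow gives $\rho=\sum_{\mu\in i^+}\mu\,w_\mu$ with $w_\mu=\sum_{\mu q\prec\rho}\lambda_{\mu q}\,q$, and setting $w=(w_\mu)_\mu\in P_i^+=\bigoplus_{\mu\in i^+}P_{t(\mu)}$, the relation $\rho=0$ in $\La$ says precisely that $w\in\ker d_1=\Omega^2(S_i)$. Since $\alpha$ is the unique arrow $i\to k$, the summand $P_k$ appears only once in $P_i^+$, and its component is $w_\alpha\in e_k\La e_j$, which contains the length-two term $\beta\gamma$. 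As $w=we_j$ and $w_\alpha\in e_k\La e_j$, the element $w$ is ``concentrated at $j$ on the right'', so if it survives to the top it will contribute $S_j$.

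The heart of the proof is therefore to show that $w\notin\rad\Omega^2(S_i)=\Omega^2(S_i)J$. First, $w\neq0$: were all components of $w$ in $I$, then the $\alpha$-initial part $\alpha w_\alpha$ of $\rho$, which is nonzero since it contains $\alpha\beta\gamma$, would lie in $I$, exhibiting $\rho$ as (left-divisible by $\alpha$ off) a shorter relation and contradicting minimality; here the uniqueness of $\alpha$ guarantees that $w_\alpha$ really is the whole $P_k$-component. Now suppose $w\in\Omega^2(S_i)J$. Then $w=\sum_l u_l x_l$ with $u_l$ minimal generators of $\Omega^2(S_i)$ and $0\neq x_l\in J$; substituting into $\rho=\sum_\mu\mu\,w_\mu$ rewrites $\rho=\sum_l\sigma_l x_l$ as a combination of strictly shorter minimal relations $\sigma_l$ (each starting at $i$) right-multiplied by nontrivial paths $x_l$ ending at $j$. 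Because $\alpha\beta\gamma\prec\rho$, the path $\alpha\beta\gamma$ must appear in some $\sigma_l x_l$, so $\alpha\beta\gamma=p\,x_l$ with $p\prec\sigma_l$ of length $\geq2$ and $x_l$ of length $\geq1$; comparing lengths forces $p=\alpha\beta$ and $x_l=\gamma$, whence $\alpha\beta\prec\sigma_l\prec I$, contradicting the hypothesis $\alpha\beta\not\prec I$. Hence $w\notin\rad\Omega^2(S_i)$, so $S_j$ occurs in $\operatorname{top}\Omega^2(S_i)$, and therefore there is an arrow $j\to i$.

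I expect the main obstacle to be the last paragraph: controlling the lifts between $\La$ and $KQ$ and excluding cancellations when $\rho$ is rewritten through shorter relations. This is exactly where both hypotheses must be used in tandem --- the uniqueness of $\alpha$ to pin down the $P_k$-component unambiguously and to keep it from mixing with other arrows out of $i$, and $\alpha\beta\not\prec I$ to forbid factoring the length-three relation as (length-two relation)$\cdot\gamma$. A more combinatorial alternative would try to erect a triangle on $\beta,\gamma$ and rotate an induced length-two relation via the Triangle Lemma (Lemma \ref{lem:3.4}); but that route seems to presuppose $\beta\gamma\prec I$, which is not given, so I would commit to the syzygy-top argument above.
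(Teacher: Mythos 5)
First, a point of reference: the paper offers no proof of this statement --- Lemma \ref{lem:3.6} is imported verbatim from \cite[Proposition 4.5]{EHS1} --- so there is no in-text argument to compare against. Your reduction is nonetheless the natural one and is the same mechanism that underlies Lemma \ref{lem:3.3}: by the period-$4$ sequence $(*)$ and symmetry, $P_i^-$ is a projective cover of $\Omega^2(S_i)\cong\Omega^{-2}(S_i)$ (since $\Omega^{-1}(S_i)=(\gamma_1,\dots,\gamma_l)\La\subseteq\rad P_i^-$), so $\operatorname{top}\Omega^2(S_i)\cong\bigoplus_{x\to i}S_x$ and it suffices to see $S_j$ in that top; and the element $w=(w_\mu)_\mu$ manufactured from a minimal relation $\rho$ containing $\alpha\beta\gamma$ is the right candidate. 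Up to and including the claim that $w$ is a nonzero element of $\Omega^2(S_i)e_j$, the argument is sound.

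The gap sits in the last paragraph, exactly where you predicted trouble but did not actually resolve it. The identity $w=\sum_l u_l x_l$ holds in $\La=KQ/I$, so lifting it to $KQ$ gives $\rho=\sum_l\tilde\sigma_l\tilde x_l+\sum_\mu\mu\,\iota_\mu$ with $\iota_\mu\in I$, not $\rho=\sum_l\sigma_l x_l$. Your case analysis inspects only the products $\tilde\sigma_l\tilde x_l$ and extracts the factorization $\alpha\beta\gamma=(\alpha\beta)\cdot\gamma$, yielding $\alpha\beta\prec I$ and the desired contradiction. But the monomial $\alpha\beta\gamma$ can just as well come from the discarded term $\alpha\,\iota_\alpha$ with $\iota_\alpha\in e_kIe_j$ containing $\beta\gamma$; since $I$ is admissible, a length-two monomial occurring in an element of $I$ must occur in a minimal generator, so this branch only yields $\beta\gamma\prec I$ --- which neither $\alpha\beta\nprec I$ nor the uniqueness of $\alpha$ excludes. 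Thus the case $\beta\gamma\prec I$ is left open, and it is symptomatic that the uniqueness of $\alpha$ is never genuinely load-bearing in your argument (the decomposition of $\rho$ by first arrow is unambiguous with or without it); that hypothesis is presumably what the source uses to dispose of precisely such a residual case. Two smaller repairs are also needed: the $\sigma_l$ are merely elements of $I$, not minimal relations, so the inference ``$\alpha\beta\prec\sigma_l\prec I$'' must be replaced by the degree argument just mentioned; and the assertion that the monomials $p\prec\sigma_l$ have length $\geqslant 2$ requires first observing that $\ker d_1\subseteq\rad P_i^+$ (the arrows in $i^+$ are linearly independent in $J/J^2$), so that the lifts $\widetilde{(u_l)_\mu}$ may be chosen in $R_Q$.
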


\begin{lemma}\label{lem:3.7} Assume $Q$ contains a square 
$$\xymatrix{j\ar[r]^{\delta} & i\ar[d]^{\alpha} \\ t\ar[u]^{\gamma} & k\ar[l]_{\beta}}$$ 
with $\alpha\beta\gamma\prec I$ but $\beta\gamma\nprec I$. If $\delta$ is the unique arrow $j\to i$, then 
$\beta\gamma\delta\prec I$. \end{lemma}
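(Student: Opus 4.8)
The plan is to read the hypothesis as data about the periodicity sequence $(*)$ at the vertex $i$ and to transport it to the vertex $k$ using the self-duality of $\La$. Being symmetric, $\La$ carries a nondegenerate associative symmetric form $(x,y)=\tau(xy)$ with $\tau(xy)=\tau(yx)$, so that the socle coefficient $\tau$ of a cyclic word is invariant under cyclic rotation. Applied to the $4$-cycle $i\xrightarrow{\alpha}k\xrightarrow{\beta}t\xrightarrow{\gamma}j\xrightarrow{\delta}i$ this gives the single identity $(\alpha\beta\gamma,\delta)=\tau(\alpha\beta\gamma\delta)=\tau(\beta\gamma\delta\alpha)=(\beta\gamma\delta,\alpha)$, which is the bridge between the two vertices.

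First I would record what $\alpha\beta\gamma\prec I$ means inside $(*)$ for $i$. The minimal relation $\rho\colon i\to j$ with $\alpha\beta\gamma\prec\rho$ is a minimal generator of $\Omega^2(S_i)=\ker(d_1\colon P_i^+\to P_i)$, and since $P_i^-=\bigoplus_{\nu\in i^-}P_{s(\nu)}$ is its projective cover, $\rho$ is attached to an incoming arrow at $i$; this is the same self-dual mechanism behind Lemma \ref{lem:3.6}, forcing a relation $i\to j$ to come with an arrow $j\to i$. As $\delta$ is the \emph{unique} such arrow, $\rho$ is attached precisely to $\delta$, and unwinding this says that the $P_k$-component (indexed by $\alpha\in i^+$) of the image under $d_2$ of the generator of $P_j\subseteq P_i^-$ is the path $\beta\gamma\colon k\to j$. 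The uniqueness of $\delta$ is exactly what prevents any other incoming arrow at $i$ from interfering with this component.

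Next I would invoke the self-duality $\La\cong D\La$, which makes $(*)$ isomorphic to its dual and renders the middle differential $d_2$ self-dual. This rotates the datum just obtained: the $(P_k,P_j)$-entry $\beta\gamma$ of $d_2$ at $i$ is matched with the $(P_t,P_i)$-entry of $d_2$ at $k$, which is the path $\gamma\delta\colon t\to i$, the matching being the concrete shadow of the cyclic equality $(\alpha\beta\gamma,\delta)=(\beta\gamma\delta,\alpha)$ from the first paragraph. By the same dictionary applied at $k$, the $(P_t,P_i)$-entry of $d_2$ there is the tail, after the first arrow $\beta\in k^+$, of a minimal relation $\sigma\colon k\to i$ attached to the incoming arrow $\alpha\in k^-$. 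Hence $\gamma\delta$ occurs in that tail with nonzero coefficient, i.e. $\beta\gamma\delta\prec I$, as required.

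The hard part is precisely the bookkeeping of the previous paragraph: making the self-duality explicit enough to guarantee that $\gamma\delta$ survives with a nonzero coefficient in $\sigma$ rather than being annihilated by the remaining monomials of $\rho$, and that $\sigma$ is genuinely minimal. This is where the last hypothesis does its work: $\beta\gamma\nprec I$ places $\beta\gamma$, and hence $\beta\gamma\delta$, in the correct radical layer, so that the relation it generates is not subsumed by a shorter one; in particular one must exclude the degenerate possibilities $\beta\gamma=0$ and $\gamma\delta=0$ (a length-two path lies in $I$ only as a minimal relation), the latter being ruled out via Lemma \ref{lem:3.3} and the shape of the square. The propagation Lemmas \ref{lem:3.3}--\ref{lem:3.4} and the square only set the geometry; the whole content of the statement sits in controlling this self-dual pairing.
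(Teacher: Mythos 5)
The paper does not actually prove Lemma \ref{lem:3.7}: it is imported verbatim from \cite[Propositions 4.5 and 4.6]{EHS1}, so there is no internal proof to compare your argument against. Judged on its own terms, your proposal has a genuine gap at its central step.

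The gap is the ``rotation'' claim: you assert that self-duality of $\La$ matches the $(P_k,P_j)$-entry of $d_2$ in the sequence $(*)$ at the vertex $i$ with the $(P_t,P_i)$-entry of $d_2$ in the sequence at the vertex $k$. These are differentials of two \emph{different} complexes attached to two different simple modules; the self-duality (equivalently $\Omega^2(S_i)\cong\Omega^{-2}(S_i)$) constrains each sequence separately and gives no a priori relation between them. The only bridge you offer is the cyclic identity $\tau(\alpha\beta\gamma\delta)=\tau(\beta\gamma\delta\alpha)$. That identity concerns the product $\alpha\beta\gamma\delta$ in $\La$, which may well be zero, in which case it is vacuous; and even when the socle pairing is nonzero, ``$(\alpha\beta\gamma,\delta)\neq 0$'' is not the same assertion as ``$\alpha\beta\gamma$ occurs with nonzero coefficient in the minimal relation attached to $\delta$'' --- a path can lie in $I$, hence pair to zero with everything, while still being a summand of a minimal relation, and conversely. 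Translating between ``coefficient in a minimal relation'' and ``socle pairing'' is precisely the content of the lemma; you flag it yourself as ``the hard part'' but do not supply it. In addition, the hypothesis $\beta\gamma\nprec I$ is invoked only impressionistically (``correct radical layer''): its actual role --- ensuring that the length-two tail $\beta\gamma$ is not already absorbed into a shorter minimal relation at $k$, so that $\beta\gamma\delta$ can genuinely appear as a summand of a length-three minimal relation there --- needs the same kind of element-level bookkeeping with generators of $\Omega^2(S_k)$ that underlies Lemmas \ref{lem:3.3} and \ref{lem:3.6}. As written, the proposal is a plausible strategy sketch rather than a proof.
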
  

\bigskip 

\section{Periodicity shadows}\label{sec:3} 

In this section, we introduce the main notion of this paper, the {\it periodicity shadows}, and discuss how they 
can be used in classification of the Gabriel quivers of GQT-algebras. \medskip  

Let $\Lambda=KQ/I$ be a fixed GQT algebra with Gabriel quiver $Q_\Lambda=Q$ and consider the associated (signed) 
adjacency matrix $A=\Adj_Q$, that is, its shadow $A=\bS_\La$ of $\La$. What are the properties of $A$? \medskip 

First, this is obviously a $n\times n$ skew-symmetric matrix with coefficients in $\bZ$. Using Theorem \ref{thm:motivation}, 
we conclude that there exists a symmetric matrix $C$ with natural coefficients, such that $AC=0$. Moreover, $C$ is the 
Cartan matrix of an algebra, so all its columns are nonzero. In particular, it follows that $A$ is singular. Note also 
that every skew-symmetric matrix with odd $n$ is singular, so this condition can reduce the number of possible matrices 
only in even size. \smallskip 

The second property is a bit more subtle and exploits existence of natural solutions of $AC=0$. Indeed, suppose that 
$AC=0$, for a natural symmetric matrix $C$, and let $a=[a_1 \ \dots \ a_n]$ be a non-zero row (equivalently, column) 
of $A$ with all entries $a_j\geqslant 0$ or all $a_j\leqslant 0$. Denote by $j_1,\dots,j_r$ all the indices in $\{1,\dots,n\}$, 
for which $a_{j_k}\neq 0$, and consider arbitrary column $c^T$, $c=[c_1 \ \dots \ c_n]$, of $C$. Then $ac^T=0$, so 
$a_{j_1}c_{j_1}+\dots a_{j_r}c_{j_r}=0$, and hence, we have $|a_{j_1}|c_{j_1}+\dots |a_{j_r}|c_{j_r}=0$, since all 
$a_j$'s have the same sign. But $c_i\in\bN$, thus $c_{j_1}=\dots=c_{j_r}=0$, and hence, each column of $C$ has zeros 
in all rows $j_1,\dots,j_r$. It means that rows $e_{j_k}C$ of $C$ are zero, for all $k\geqslant 1$. Consequently, $C$ 
has at least one zero row, so a zero column, by symmetricity. Therefore, we obtain that $A$ with the above property 
cannot be a shadow of a GQT algebra. \smallskip 

As a result, the matrix $A=\Adj_Q$ does not admit a non-zero row containing elements of the same sign. Rephrasing 
this in the language of quivers, this is equivalent to say that the reduced quiver $\bfQ_A$ of $A$ has no source or 
sink. Let us only mention that the Gabriel quivers of selfinjective algebras share this property. \smallskip 

We have not used tameness of $\La$ so far. Hence all the above properties hold for the adjacency matrix $A=\Adj_Q$ of 
the Gabriel quiver of each symmetric algebra $\Lambda=kQ/I$ with all simples periodic of period $4$, without restricting 
to tame or infinite representation type. \medskip 

Based on the above discussed properties, we suggest the following definition. 

\begin{defi}\label{per-sh} A matrix $A\in\bM_n(\bZ)$ is called a {\bf periodicity shadow}, if the following holds: 
\begin{enumerate}
\item[PS1)] $A$ is a singular skew-symmetric matrix. 
\item[PS2)] $A$ does not admit a non-zero row containing integers of the same sign. 
\item[PS3)] There exists a symmetric matrix $C\in\bM_n(\bN)$ with non-zero columns such that $AC=0$. 
\end{enumerate}\end{defi} 

Let us clarify that the condition PS3) implies PS2), as explained above, but from algorithmic reasons (see Introduction), 
it is better to exclude matrices not satisfying PS2) first, instead of checking PS3) for all of them. Of course, 
there may be many such periodic shadows (even with $n$ fixed), if we do not bound the range of entries. \smallskip 

Periodicity shadows in the above sense cover all adjacency matrices of symmetric algebras having all simples 
periodic of period $4$. Both wild and tame (including all GQT algebras), and hence there is barely no hope to 
classify them in general. But restricting ourselves only to shadows coming from tame algebras, there comes a severe 
simplification. Basically, if $\Lambda$ is tame, then the associated adjacency matrix $A=\Adj_Q$ satisfy 
the following properties: {\it 
\begin{enumerate}
\item[T1)] all entries $a_{ij}$ of $A$ are in $\{-2,-1,0,1,2\}$. 
\item[T2)] each row of $A$ does not contain simultainously: both $2$ and entry $\geqslant 1$ or both $-2$ and entry 
$\leqslant -1$. 
\item[T3)] each row of $A$ cannot have more than four $1$'s or more that four $-1$'s. 
\end{enumerate} } 

\begin{defi} 
We say that $A\in\bM_n(\bZ)$ is a {\bf tame periodicity shadow}, if $A$ is a periodicity shadow satisfying 
conditions T1)-T3). \end{defi}\medskip 

Instead of using full name 'tame periodicity shadow', we will frequently say just {\it shadow}. If $A=\bS_\La$ 
is a shadow of a tame symmetric algebra with $4$-periodic simples, we will always call $A$ a {\it shadow of $\La$}, 
to distinguish from shadow alone, which is a matrix satisfying PS1)-PS3) and T1)-T3). \smallskip  

Observe that T1) guarantees that we have no wild Kronecker subquiver $K_3$ in $\bfQ_A\subset Q$, while T2) and 
T3) exclude wild subquivers of types $K_2^\pm$ and $S_5^\pm$ (see Section \ref{sec:2}). One can think of enlarging 
the set of restrictions to other subquivers - shrinking the resulting lists of shadows - but this seem to be a rather 
technical issue, postponed for future. Now, we are using the simplest conditions, excluding, say 'the most wild' 
subquivers, and look what is left. It turns out that even with such a simple set of restrictions, this gives a pretty 
dense sieve. \smallskip 
 
Namely let us look at the smallest $n\in\{3,\dots,6\}$. Then, from the total number of $5^{\frac{1}{2}n(n-1)}$ 
skew-symmetric integer matrices with coefficients in $[-2,2]$ we have only 
\begin{itemize} 
\item $5$ tame periodicity shadows, for $n=3$, 
\item $12$ tame periodicity shadows, for $n=4$, 
\item $65$ tame periodicity shadows, for $n=5$, and 
\item $516$ tame periodicity shadows, if $n=6$.  
\end{itemize} 

We mean tame periodicity shadows with respect to permutation of rows and columns, i.e. we have $5,12,65,516$, so 
called {\it basic} periodicity shadows $\bS_1,\bS_2\dots$, and the rest is obtained by taking $P^T \bS_k P$, 
$k\geqslant 1$, for all permutation matrices $P$. Denote by ${\bf \bS }(n)$ the set of all $n\times n$ basic tame 
periodicity shadows. The full algorithm performing computation of ${\bf \bS}(n)$ is discussed in a separate note  
\cite{BSapx}, where we also provide lists of so called {\it essential} shadows, for $3 \leqslant n \leqslant 6$. \smallskip 

Complete lists of shadows (together with lists of related {\it shades}) can be found in an appendix \cite{BSlist}, 
from where we took the above numbers. The notion of a {\it shade} is an auxiliary notion, namely, a shade is an 
integer matrix satisfying PS1)-PS2) and T1)-T3). We only mention that the computations are based on a recursive 
procedure generating all shades first, and then verifying PS3), to left only (basic) tame periodicity shadows. 
Let us finish this issue with the following table, borrowed from \cite{BSapx}, which shows more detailed numbers 
for $n\leqslant 6$. \medskip 

\begin{tabular}{|p{2cm}|p{4cm}|p{4cm}|p{4cm}|} 
\hline 
& number of shades & number of shadows & essential shadows \\ 
\hline 
n=3 & 5 & 5 & 4 \\ 
n=4 & 12 & 12 & 7 \\ 
n=5 & 138 & 65 & 26 \\ 
n=6 & 1260 & 516 & 223 \\ 
\hline 
\end{tabular} 

\bigskip 

The rest part of this section will be devoted to discuss how the (tame) periodicity shadows are used to 
uncover the general structure of the Gabriel quiver of an algebra. \medskip

To begin with, fix an indecomposable tame symmetric algebra $\Lambda=kQ/I$, $Q_\Lambda=Q$, with all simples 
periodic of period $4$. Then the associated matrix $A=\Adj_Q$ is a tame periodicity shadow of size $n\times n$. 
Up to labeling of vertices, we can assume that $A$ is one of the basic periodicity shadows from the (finite) list 
${\bf \bS}(n)$. As a result, we have $Q\equiv \bfQ_A$, or equivalently, the reduced form of $Q$ is $Q^\times=\bfQ_A$, 
in the notation of Section \ref{sec:3}. \medskip 

We recall that $Q$ (modulo loops) has the form of a glueing: 
$$Q^\circ=\bbQ\sqcup \overline{G},$$ 
of the smallest quiver $\bbQ:=\bfQ_A$ with adjacency matrix $A$ and the double-quiver associated to a graph $G$ 
(possibly with multiple edges). Later we will prove that $G$ has a very restrictive form (see Theorem \ref{thm1}). 
Now, let us start with the following simple observation. \medskip 

\begin{prop}\label{gabriel} If $n=2$, then $Q$ is isomorphic to one of the following four quivers: 
$\newline$
$$\xymatrix@C=0.4cm{\circ \ar@<+0.1cm>@/^10pt/[rr] \ar@/^8pt/[rr] & & \circ \ar@<+0.1cm>@/^10pt/[ll]\ar@/^8pt/[ll] } 
\xymatrix@C=0.4cm{& \circ \ar@/^8pt/[rr] && \ar@/^8pt/[ll] \circ & } 
 \xymatrix@C=0.4cm{&\ar@(lu,ld)[] \ar@/^8pt/[rr] \circ && \ar@/^8pt/[ll] \circ \ar@(ru,rd)[] & } 
\xymatrix@C=0.4cm{&\ar@/^8pt/[rr] \circ && \ar@/^8pt/[ll] \ar@(ru,rd)[] \circ & }$$
$\newline$ 
If $n\geqslant 3$, then $G$ is a simple graph. In particular, $Q$ is obtained from $Q^\circ$ by 
adding a finite number of loops (at most one per each vertex). \end{prop}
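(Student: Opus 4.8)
The plan is to handle the two regimes separately, dealing with $n=2$ directly and reducing the case $n\geqslant 3$ to the exclusion of a single forbidden configuration.

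For $n=2$ I would first determine the shadow itself. Writing $A=\Adj_Q=\vec{0 & a\\ -a & 0}$, each of its two rows has at most one nonzero entry, so condition PS2) --- which forbids a nonzero row whose entries all share one sign --- forces $a=0$; equivalently, $\bfQ_A=\bbQ$ can have neither source nor sink, which on two vertices leaves only the empty quiver. Hence $Q^\times=\bbQ$ is empty and $Q$ is built entirely from loops and $2$-cycles. Indecomposability of $\La$ makes $Q$ connected, so at least one $2$-cycle joins the two vertices, while tameness (absence of the wild Kronecker $K_3$) bounds the number of arrows in each direction by $2$; since $A=0$ the two directions carry equally many arrows, so there are exactly one or two $2$-cycles. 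If there are two, the vertices are joined by double arrows and the result of Section \ref{sec:2} rules out loops, producing the first displayed quiver; if there is one, loops are allowed but, again by Section \ref{sec:2}, at most one sits at each vertex, and the cases of $0$, $1$, $2$ loops give the remaining three quivers.

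For $n\geqslant 3$ I would first translate the assertion ``$G$ is simple''. In the decomposition $Q^\circ=\bbQ\sqcup\overline{G}$ fixed by the canonical splitting $A=A^++A^-$, the $2$-cycles joining a pair $i,j$ are exactly the edges of $G$ between them, and their number is $\min(q_{ij},q_{ji})$; thus $G$ is simple if and only if no pair carries two $2$-cycles, i.e. if and only if $q_{ij}=q_{ji}=2$ never happens. So everything reduces to showing that a double $2$-cycle (double arrows in both directions between some $i$ and $j$) is impossible once $n\geqslant 3$. Here connectedness is the lever: since $\{i,j\}$ is a proper nonempty subset of $Q_0$ that is not a union of components, some arrow $\delta$ has exactly one endpoint in $\{i,j\}$ and its other endpoint $k\notin\{i,j\}$. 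If $\delta$ emanates from the shared vertex it combines with the double arrow there to give a source of three arrows, a subquiver $\cong K_2^+$; if $\delta$ points into the shared vertex one gets a sink of three arrows, $\cong K_2^-$. Either way tameness is contradicted by the wildness of $K_2^\pm$ recorded in Section \ref{sec:2}, so no double $2$-cycle exists and $G$ is simple. The final ``in particular'' is then immediate: with $G$ simple, $Q$ differs from its loop-free part $Q^\circ$ only by loops, and Section \ref{sec:2} bounds these by one per vertex.

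I expect the only delicate point to be making the bookkeeping ``number of $2$-cycles $=\min(q_{ij},q_{ji})$'', and hence the equivalence with simplicity of $G$, watertight: the subquiver $\overline{G}$ is well defined only once one fixes how arrows are paired into $2$-cycles, so I would pin this down via the canonical decomposition $A=A^++A^-$ used to define $\bfQ_A$. Once that ambiguity is removed, the four orientations of the connecting arrow $\delta$ are a routine check, and the wildness argument itself requires nothing beyond the forbidden subquivers already established in Section \ref{sec:2}.
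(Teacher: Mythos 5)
Your proposal is correct and follows essentially the same route as the paper: for $n=2$ you identify the zero shadow and enumerate the loop/$2$-cycle possibilities under tameness and connectedness, and for $n\geqslant 3$ you exclude a double $2$-cycle by using connectedness to find a third vertex attached to the pair, producing a forbidden $K_2^\pm$ subquiver. The extra care you take in justifying why the $n=2$ shadow is zero (via PS2)) and in pinning down the edge/$2$-cycle bookkeeping is a harmless elaboration of what the paper states more tersely.
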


\begin{proof} We note that the unique periodicity shadow of size $n=2$ is the zero matrix $A$, with associated quiver 
$\bfQ_A$ empty. Then $Q^\circ=\overline{G}$ for a (connected) graph $G$ on two vertices. Then $G$ must have 
either two edges or one edges connecting the two vertices (cannot have more than two, due to tameness of $\Lambda$). 
In the first case, we have $Q=Q^\circ$ ($Q$ already has double arrows, so no more loop can be added), and in the second, 
$Q$ is a double cycle enlarged by at most two loops. This proves the first part. \medskip 

For the second, it is sufficient to see that $G$ must be simple, if $n\geqslant 3$. Indeed, suppose that there are 
vertices $i,j$ in $Q_0$ such that $G$ has at least two edges between $i$ and $j$. Then $Q$ admits the following 
subquiver 
$\newline$
$$\xymatrix@C=0.5cm{i \ar@<+0.1cm>@/^10pt/[rr] \ar@/^8pt/[rr] & & j \ar@<+0.1cm>@/^10pt/[ll]\ar@/^8pt/[ll] }$$ 
$\newline$
Since $\Lambda$ is assumed connected, the quiver $Q$ is also connected, and hence we can find at least one vertex 
$k\neq i,j$ such that $Q$ has an arrow between $k$ and $i$ or $k$ and $j$. But then $Q$ admits a forbidden wild subquiver 
of the form $K_2^\pm $, a contradiction. \medskip 

The number of loops in $Q$ at given vertex is at most one, due to tameness, as explained in Section \ref{sec:2}. 
\end{proof} \medskip

Recall that any quiver $Q$ is uniquely determined by its arrow matrix $\Arr_Q\in\bM_n(\bN)$. In particular, $q_{ii}$ 
is the number of loops and $\Adj_Q=\Arr_Q-\Arr_Q^T$. For $n\geqslant 3$, the quiver $Q$ is given (modulo loops) by 
the matrix 
$$\Arr_{Q^\circ}=\Arr_{\bbQ}+\Arr_{\overline{G}},$$ \medskip where $\Arr_{\overline{G}}$ is a symmetric matrix with 
coefficients $0,1$ (actually, it is the usual adjacency matrix of the simple graph $G$). We will further see that 
$G$ is in fact a disjoint union of edeges (see Theorem \ref{thm1}), now let us point out that the glueing of $\bbQ$ with 
$\overline{G}$ must satisfy the following additional properties. \medskip  

\begin{enumerate}
\item $\Arr_Q$ is obtained from $\Arr_{Q^\circ}$ by adding a diagonal matrix with entries $q_{ii}\in\{0,1\}$ on the 
diagonal and $\Arr_Q$ satisfies T1)-T3). 

\item If $a_{ij}=\pm 2$, then $G$ has no edge connected to $i$ or $j$. In particular, no edge between $i$ and $j$. 

\item If $\La$ is representation-infinite, and $i,j\in Q_0$ such that $q_{ij}=1$ is the unique positive element 
in $i$-th row of $\Arr_Q$, then $q_{ij}$ is not the unique positive element in $j$-th column of $\Arr_Q$. 
\end{enumerate} 

We only mention that the third condition is an immediate consequence of Lemma \ref{lem:3.2}. Combining results 
from the next section and the above properties we may always describe all possible Garbiel quivers of GQT algebras, 
which are obtained from a finite number of shadows by attaching edges such that $\Arr_Q$ satisfy \ref{lem3}-\ref{lem6} 
and (1)-(3) above. Moreover, there are some other possibilities of restricting the list of quivers, for example, 
considering only essential shadows, but we will not say more about this here (see \cite{BSapx} for more details).  \bigskip 

\section{Proof of The Reconstruction Theorem}\label{sec:4} 

In this section we present a sequence of observations leading to the proof of the main result of this paper, i.e. 
The Reconstruction Theorem (see Introduction). As a subsequent corollary, we will also obtain a strong result 
pertaining the zero shadow case, which includes well-know case of algebras of quaternion type; see Corollary \ref{coro1} 
and comments below. \medskip 

Let $\Lambda=KQ/I$ be a fixed indecomposable tame symmetric algebra with all simple modules in $\mod\La$ 
periodic of period $4$. We will write $\mathbb{Q}$ for the quiver $\bfQ_{\bS}$ identified with its shadow 
$\bS:=\bS_\Lambda=\Adj_{Q_\Lambda}$. Then the Gabriel quiver $Q=Q_\Lambda$ of $\Lambda$ is given (up to 
loops) by its loop-free part  
$$Q^\circ=\bbQ\sqcup\overline{E},$$ 
where $E$ is a simple graph on $E_0=Q_0=\{1,\dots,n\}$ (we assume $n\geqslant 3$, since all is known for 
$n\leqslant 2$). We use letter $E$ for the graph denoted earlier by $G$ to indicate that it will be a 
disjoint union of edges. With this setup, we have the following first observation. \medskip 

\begin{lemma}\label{lem1} Every vertex in $E$ has degree at most $2$. Hence in particular, $E$ is a disjoint union 
of lines and cycles. \end{lemma}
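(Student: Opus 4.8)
The plan is to argue by contradiction and convert the hypothesis ``$\deg_E(v)\ge 3$'' into one of the wild configurations excluded in Section~\ref{sec:2}. First note that the degree of a vertex $v$ in $E$ is exactly the number of $2$-cycles of $Q$ passing through $v$: each edge of $E$ contributes one $2$-cycle of $\overline{E}\subseteq Q^\circ$, and these are disjoint from the arrows of $\bbQ$. So assume $v$ lies on three distinct $2$-cycles, given by arrows $\alpha_t\colon v\to w_t$ and $\beta_t\colon w_t\to v$ with $w_1,w_2,w_3$ pairwise distinct and different from $v$. In particular $|v^-|\ge 3$ and $|v^+|\ge 3$, so $v$ is at least a $(3,3)$-vertex.

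Next I would record the immediate consequences of the relation lemmas for the quadratic behaviour at $v$. For each $t$, the return path $\alpha_t\beta_t\colon v\to w_t\to v$ is either $\prec I$, in which case Lemma~\ref{lem:3.3} produces a loop at $v$, or it is not; symmetrically $\beta_t\alpha_t\prec I$ would force a loop at $w_t$. Since there is at most one loop per vertex and none at a vertex joined to another by double arrows (Section~\ref{sec:2}), the return relations at $v$ are severely limited. More importantly, I would examine the nine through-paths $\beta_a\alpha_b\colon w_a\to v\to w_b$; whenever such a path is $\prec I$, Lemma~\ref{lem:3.3} yields an arrow $w_b\to w_a$, i.e. new arrows among $w_1,w_2,w_3$, and Lemma~\ref{lem:3.4} lets these relations propagate. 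The point of this bookkeeping is to split the analysis according to how many through-paths survive.

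The heart of the argument is then a dichotomy. If enough of the through-paths $\beta_a\alpha_b$ survive (are not $\prec I$), I would pass to the quotient $C$ of $B=\La/J^3$ retaining these paths together with the six arrows at $v$, and kill everything else; a suitable Galois covering $\widetilde{C}\to C$ then contains, as a full subcategory, the hereditary path algebra of a star with $v$ as centre and at least five incident arrows (the through-paths being exactly what make this subcategory hereditary rather than radical-square-zero). Such a star is a wild hereditary algebra, contradicting tameness of $\La$, just as in the treatment of subquivers of type $K_2^{*}$. In the opposite case too many through-paths and return paths vanish; then I would use that $\La$ is symmetric, so $\soc P_v=S_v$, to see that no nonzero path from $v$ can return to $v$, which is impossible. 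Combining this with Lemma~\ref{lem:3.1} (so $|\hat{p}_v|>|p_v|$) and the exact sequence $(*)$ rules out period $4$ for $S_v$.

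Either way we reach a contradiction, so every vertex of $E$ has degree at most $2$, and a graph all of whose vertices have degree at most $2$ is a disjoint union of lines and cycles. The main obstacle I anticipate is the relation bookkeeping in the middle step: pinning down precisely which quadratic relations $\beta_a\alpha_b\prec I$ hold, and translating each resulting pattern either into a genuinely hereditary (hence wild) star inside a covering, or into an obstruction to $\soc P_v=S_v$. Lemmas~\ref{lem:3.3} and~\ref{lem:3.4} are the tools that control this, but the case analysis over the surviving through-paths is the delicate part.
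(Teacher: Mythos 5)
Your overall strategy (contradiction plus a wild subcategory in a covering) is the right one, but the proposal as written has genuine gaps and pursues the harder of two possible routes. The paper's proof is much more direct: after excluding a loop at $v$, Lemma~\ref{lem:3.3} forces \emph{all three return paths} $\alpha_t\beta_t\colon v\to w_t\to v$ to satisfy $\alpha_t\beta_t\nprec I$ (a relation on any of them would produce an arrow $v\to v$, i.e.\ a loop). These three surviving return paths alone give, in a covering, a full hereditary subcategory on two copies $v,v'$ of $v$ joined by the three length-two paths through $w_1,w_2,w_3$ --- a quiver with $5$ vertices and $6$ arrows, hence of first Betti number $2$ and wild. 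No analysis of the through-paths $\beta_a\alpha_b$ and no dichotomy is needed.

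Two concrete problems with your version. First, you never actually exclude a loop at $v$: the general facts from Section~\ref{sec:2} forbid loops at vertices joined by \emph{parallel} double arrows ($K_2^\pm$), not at vertices on a $2$-cycle, so they do not apply here; the paper rules out the loop by resolving it in a covering and exhibiting a wild tree containing $\wt{\bD}_4$ properly. Without this step you cannot conclude that the return paths survive, which is the hinge of the whole argument. Second, the fallback branch of your dichotomy does not work as stated: from the vanishing of some length-two through-paths at $v$ you cannot infer that ``no nonzero path from $v$ returns to $v$,'' and in any case $\soc P_v\cong S_v$ only guarantees the existence of \emph{some} (typically long) returning path, so no contradiction is in sight. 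Since the return-path route closes the proof unconditionally, the dichotomy and the delicate bookkeeping you flag as the main obstacle can be dispensed with entirely.
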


\begin{proof} Suppose to the contrary that $E$ admits a vertex $i$ with degree $\geqslant 3$. Then $Q$ admits a 
subquiver of the form 
$$\xymatrix{& y\ar@<-0.1cm>[d]_{\beta_2} & \\ 
x \ar@<-0.1cm>[r]_{\beta_1} & i \ar@<-0.1cm>[l]_{\alpha_1} \ar@<-0.1cm>[r]_{\alpha_3}  
\ar@<-0.1cm>[u]_{\alpha_2} & z \ar@<-0.1cm>[l]_{\beta_3}}$$ 
Moreover, there is no loop $\rho:i\to i$ (in $Q$), because otherwise we would get a wild subcategory in covering 
of the form: 
$$\xymatrix{&z\ar[d]^{\beta_3}&& \\ 
y\ar[r]^{\beta_2} & i & i \ar[l]^{\rho} \ar[r]_{\alpha_1} & x \\ 
&x \ar[u]^{\beta_1} && }$$ 
Thus, using Lemma \ref{lem:3.3}, we deduce that $\alpha_i\beta_i\nprec I$, for any $1\leq i \leq 3$. It follows 
that $\Lambda$ admits the following subcategory (in covering):  
$$\xymatrix{&& x \ar[rd]^{\beta_1} && \\ 
& i \ar[r]^{\alpha_2} \ar[ru]^{\alpha_1} \ar[rd]_{\alpha_3} & y \ar[r]^{\beta_2} & i & \\ 
&& z\ar[ru]_{\beta_3} && }$$ 
which is isomorphic to a wild hereditary algebra, and we conclude that $\Lambda$ is wild, a contradiction. \end{proof} 

Next observation shows that connected parts of $E$ cannot be too big. \smallskip 

\begin{lemma}\label{lem2} Every connected component of $E$ has at most $3$ vertices. \end{lemma}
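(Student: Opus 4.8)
The plan is to argue by contradiction, first reducing to a purely local configuration and then producing a wild subcategory in covering, in the spirit of the proof of Lemma \ref{lem1}. Suppose some connected component of $E$ has at least $4$ vertices. By Lemma \ref{lem1} this component is a line or a cycle, and in either case one finds four pairwise distinct consecutive vertices $a,b,c,d$ along it with $b$ and $c$ of degree $2$ in $E$. This is precisely the point where size $4$ differs from size $3$: for a triangle the two adjacent degree-$2$ vertices share their remaining neighbour, so the fourth vertex coincides with the first, whereas a line or cycle on $\geqslant 4$ vertices genuinely yields four distinct ones. Thus $Q$ contains a chain of $2$-cycles
$$\xymatrix@C=0.6cm{a \ar@<-0.3ex>[r]_{\alpha} & b \ar@<-0.3ex>[l]_{\bar\alpha} \ar@<-0.3ex>[r]_{\beta} & c \ar@<-0.3ex>[l]_{\bar\beta} \ar@<-0.3ex>[r]_{\gamma} & d \ar@<-0.3ex>[l]_{\bar\gamma}}$$
on distinct vertices, in which $b$ and $c$ carry no further $2$-cycles.

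First I would dispose of loops at the internal vertices $b$ and $c$. Since each of $b,c$ already lies on two $2$-cycles, a loop there makes it at least a $(3,3)$-vertex; resolving the loop in a suitable Galois covering of $\La/J^3$ unfolds it into an infinite line and leaves, at each lift, a vertex incident to the lifts of both adjacent $2$-cycles together with the resolved loop-arrows. As in the two-loops and degree-$3$ discussions (Section \ref{sec:2} and Lemma \ref{lem1}), this contains a wild hereditary subcategory, so $\La$ would be wild. Hence we may assume there are no loops at $b$ or $c$.

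With no loops at $b,c$, Lemma \ref{lem:3.3} turns the geometry into relation data. Indeed, if a return composition such as $\beta\bar\beta$ (the path $b\to c\to b$) lay in $I$, Lemma \ref{lem:3.3} would force an arrow $b\to b$, i.e. a loop at $b$; thus $\beta\bar\beta,\bar\beta\beta\nprec I$, and likewise for the $a$--$b$ and $c$--$d$ cycles. It then remains to analyse the crossing compositions $\alpha\beta$, $\bar\beta\bar\alpha$ at $b$ and $\beta\gamma$, $\bar\gamma\bar\beta$ at $c$. I would split into cases according to whether these lie in $I$: if one of them, say $\alpha\beta$, is a relation, then Lemma \ref{lem:3.3} produces an arrow $c\to a$ and hence a triangle $a\to b\to c\to a$, after which the Triangle Lemma \ref{lem:3.4} propagates relations and forces extra arrows overloading the in/out-degree at $b$ or $c$, yielding a configuration excluded in Section \ref{sec:2} (such as $S_5^\pm$ or a $K_2^\pm$-type wild covering); if instead all crossing compositions survive, the surviving length-$2$ paths along the chain assemble, in a covering of $\La/J^3$, into a full subcategory isomorphic to a wild hereditary algebra whose underlying graph is neither Dynkin nor Euclidean. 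Either way $\La$ is wild, contradicting tameness.

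The main obstacle is this last step: carrying out the case analysis on the crossing compositions and writing down explicitly the wild subcategory in covering, while keeping track of the additional arrows that the reduced part $\bbQ$ may contribute at $b$ and $c$. The absence of loops (second paragraph) is what makes the relation bookkeeping via Lemma \ref{lem:3.3} clean, and the distinctness of $a,b,c,d$ is essential, since in the triangle case the chain closes up and no wild diagram appears. For a cycle on exactly four vertices there is the extra $2$-cycle $d\rightleftarrows a$, but this only adds arrows and therefore cannot rescue tameness.
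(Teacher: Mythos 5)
Your overall strategy is the same as the paper's (reduce to a chain of $2$-cycles on four distinct vertices, kill loops at the internal vertices, use Lemma \ref{lem:3.3} to control which length-$2$ compositions lie in $I$, then exhibit a wild subcategory in covering), but two of your steps have genuine gaps. First, your claim that the return compositions of the $a$--$b$ and $c$--$d$ cycles are ``likewise'' not involved in relations is false in general: the loop argument only excludes loops at vertices of the component that are \emph{not leaves}, so when the component is a line the endpoints $a$ and $d$ may well carry loops, and then Lemma \ref{lem:3.3} gives no information about $\alpha\bar\alpha$ (the path $a\to b\to a$). The paper is forced to track exactly this: it proves $\alpha_1\beta_1\nprec I$ only ``if there is no loop at $1$'', and in the final step the straightforward wild subcategory (which needs the return composition at an endpoint to survive) only works for a cycle or for a line with no loop at an end; the case of loops at both ends requires a separate $\wt{\wt{\bE}}_6$-configuration built from the resolved loops. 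Your proposal never addresses this case, and it does not follow from the configurations you describe.

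Second, your treatment of a crossing composition lying in $I$ does not go through as stated. If $\alpha\beta\prec I$ then Lemma \ref{lem:3.3} indeed yields an arrow $c\to a$, but this merely makes $c$ at worst a $(2,3)$-vertex and $a$ a $(?,?)$-vertex with one extra incoming arrow; nothing close to $S_5^\pm$ or $K_2^\pm$ is forced, and the Triangle Lemma \ref{lem:3.4} does not ``overload'' any degree here. The paper's actual argument at this point is the longest part of the proof: it uses the new arrow $\gamma:i+2\to i$ together with the surviving compositions along the chain to build explicit wild hereditary subcategories in covering (including $\wt{\wt{\bE}}_6$-shapes), with subcases according to whether $i$ or $i+2$ is a leaf and whether the adjacent endpoint carries a loop. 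Without writing down such diagrams, the case of a crossing relation is not excluded, so your proof is incomplete at precisely the step you yourself flag as ``the main obstacle''.
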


\begin{proof} Let $G=(G_0,G_1)$ be an arbitrary connected component of $E$, and suppose it has $m\geqslant 4$ vertices. 
We claim first that $Q$ has no loops at vertices of $G$, which are not leaves. Assume it is not the case, and let 
$i\in G_0$ be a vertex, which is not a leaf of $G$ (equivalently, of $E$), such that there is a loop $\rho:i\to i$ 
in $Q_1$. Then $Q$ admits the following subquiver: 
$$\xymatrix@R=0.35cm{ & & & \\ 
x \ar@<-0.1cm>[r]_{\beta_1} & i \ar@(ul,ur)[]^{\rho} \ar@<-0.1cm>[l]_{\alpha_1} \ar@<-0.1cm>[r]_{\alpha_2} & 
z \ar@<-0.1cm>[l]_{\beta_2} \ar[r]^{\gamma} & } $$
where all arrows besides $\rho$ are coming from edges in $G$ (which is either a line graph or a cycle with $m\geqslant 4$ 
vertices, by Lemma \ref{lem1}). But this leads to the following wild (hereditary) subcategory in covering:  
$$\xymatrix@R=0.45cm{ x & i\ar[l]_{\alpha_1} \ar[d]_{\rho} \ar[r]^{\alpha_2} & z & \\ 
x \ar[r]^{\beta_1} &  i  & \ar[l]_{\beta_2} z \ar[r]^{\gamma} & y }$$ 
Hence indeed, there are no loops $\rho:i\to i$ in $Q$ such that $i\in G_0$ is not a leaf. As a result, the quiver $Q$ 
has a subquiver $\overline{G}$ of one of the following two forms 
$$\xymatrix@R=0.6cm{ 2 \ar@<-0.1cm>[d]_{\beta_1} \ar@<-0.1cm>[r]_{\alpha_2} & 3 \ar@<-0.1cm>[l]_{\beta_2} \ar@<-0.1cm>[r] 
& \ar@<-0.1cm>[l] \ar@<-0.1cm>[d]& \\ 
1 \ar@<-0.1cm>[u]_{\alpha_1} \ar@<-0.1cm>[r]_{\beta_n} & \ar@<-0.1cm>[l]_{\alpha_n} m \dots    &  \dots \ar@<-0.1cm>[u] } \mbox{ or } 
\xymatrix@R=0.35cm{1 \ar@<-0.1cm>[r]_{\alpha_1} & 2 \ar@<-0.1cm>[l]_{\beta_1} \ar@<-0.1cm>[r]_{\alpha_2} & 
3 \ar@<-0.1cm>[l]_{\beta_2} \ar@<-0.1cm>[r] & \ar@<-0.1cm>[l] \dots \ar@<-0.1cm>[r]_{\alpha_{m-1}} & 
\ar@<-0.1cm>[l]_{\beta_{m-1}} m }$$ 
where there are possibly loops $\rho\in Q_1$ at vertices $1$ or $m$ only in the second case. \smallskip 

In the first case, applying Lemma \ref{lem:3.3}, we conclude that $\alpha_i\beta_i\nprec I$ and $\beta_i\alpha_i\nprec I$, 
for any $1\leq i \leq m$. For the second case, we have $\alpha_i\beta_i\nprec I$, if $2\leq i\leq m-1$, and 
$\alpha_1\beta_1\nprec I$, if there is no loop at $1$. Dually, we obtain $\beta_i\alpha_i\nprec I$, for any 
$1\leq i \leq m-2$, and $\beta_{m-1}\alpha_{m-1}\nprec I$, if there is no loop at $m$. \smallskip 

Next, observe that $\alpha_i\alpha_{i+1}\nprec I$ and $\beta_{i+1}\beta_i\nprec I$ for every $i$, whenever defined. 
Indeed, if this is not the case, then due to Lemma \ref{lem:3.3}, we have an arrow $\gamma: i+2\to i$ or $i\to i+2$ in 
$Q\setminus \overline{E}$, for some $1\leq i\leq m$, where $i\leq m-2$ in case $G$ is a line, and vertices are taken 
modulo $m$, in case of a cycle. We may assume $\gamma$ is an arrow $i+2\to i$ with $\alpha_i\alpha_{i+1}\prec I$, since 
the arguments in the second case are dual. Suppose first that $i+2$ is not a leaf. In this case, we have the following 
wild subcategory in covering 
$$\xymatrix@R=0.6cm{i+1 & i+2 \ar[l]_{\beta_{i+1}} \ar[r]^{\alpha_{i+2}} \ar[d]^{\gamma} & i+3 & \\ 
i-1 \ar[r]^{\alpha_{i-1}} & i & i+1 \ar[l]_{\beta_i} \ar[r]^{\alpha_{i+1}} &  i+2 }$$ 
if also $i$ is not a leaf. If it is, then $G$ is a line and $i=1$. In this case, if there is a loop $\rho:1\to 1$ 
in $Q$, then we get analogous wild subcategory replacing above $\alpha_{i-1}$ by the loop $\rho:1\to 1$ resolved in 
covering. If there is no loop at $1$, we have also $\alpha_1\beta_1\nprec I$, so we end up with another 
wild (hereditary) subcategory 
$$\xymatrix@R=0.5cm{&&  \circ\ar@{-}[d]^{\rho} &&& \\ &&  4 &&& \\ 
1\ar[r]^{\alpha_1} & 2 & \ar[l]_{\beta_2} 3 \ar[u]_{\alpha_3} \ar[r]^{\gamma} & 1 & \ar[l]_{\beta_1} 2 & \ar[l]_{\alpha_1} 1 } $$
with $\rho=\beta_4$, if $m\geqslant 5$, and $\rho=\beta_3:4\to 3$, otherwise (note that $3$ is never a leaf, so 
$\alpha_3\beta_3\nprec I$). It remains to see that if $i+2$ is a leaf of $E$, i.e. $i=m-2$, then $i$ is not and one 
can construct similar wild subcategories. Indeed, we have the following wild hereditary subcategory of type 
$\wt{\wt{\bE}}_6$
$$\xymatrix@R=0.6cm{i \ar[r]^{\alpha_{i}}& i+1 & i+2 \ar[l]_{\beta_{i+1}}  \ar[d]^{\gamma} & & \\ 
\circ \ar@{-}[r]^{\rho} & i-1 \ar[r]^{\alpha_{i-1}} & i & i+1 \ar[l]_{\beta_i} \ar[r]^{\alpha_{i+1}} &  i+2 }$$
where $\rho=\beta_{i-2}$, for $m\geqslant 5$, and if $m=4$, $\rho$ is either a resolved loop at $1$, or $\rho=\beta_1$, 
if there is no loop. \medskip 

Summing up, it has been proven that $\alpha_i\alpha_{i+1}\nprec I$ and $\beta_{i+1}\beta_i\nprec I$, 
for all $i$, if the composition is defined. \smallskip 

Now, observe that for $G$ being a cycle, $\Lambda$ admits the following wild subcategory in covering: 
$$\xymatrix@R0.5cm{& 1\ar[d]^{\alpha_1}  && \\ 
1 & \ar[l]_{\beta_1} 2 \ar[d]^{\alpha_2} & \ar[l]_{\beta_2} 3 \ar[r]^{\alpha_3} & 4 \\ &3&& }$$
hence a contradiction. The same arguments work when $G$ is a line graph with no loop at $1$ (then $\alpha_1\beta_1\nprec I$). 
Using analogous subcategory formed by arrows $\alpha_{m-3},\alpha_{m-2},\alpha_{m-1}$ and $\beta_{m-2},\beta_{m-1}$, we 
can similarily exclude the case with no loop at vertex $m$. \medskip 

Finally, suppose $G$ is a line graph and there are loops $\rho:1\to 1$ and $\sigma:m\to m$ in $Q$. In this case, 
we get the a wild hereditary (type $\wt{\wt{\bE}}_6$) subcategory of the form 
$$\xymatrix@R=0.5cm{&&  4 &&& \\ &&  3\ar[u]^{\alpha_3} \ar[d]^{\beta_2} &&& \\ 
1 & \ar[l]_{\rho} 1 \ar[r]^{\alpha_1} &  2 \ar[r]^{\alpha_2} & 3 & \ar[l]_{\beta_3} 4 \ar[r]^{\xi} & \circ } $$ 
where $\xi=\alpha_4$ if $m\geqslant 5$, and it is the loop $\xi=\sigma$ (resolved in covering), otherwise. \end{proof} \medskip 

In the zero shadow case, we have $Q^\circ=\overline{E}$, so $E$ has exactly one connected component (itself), 
therefore, we obtain the following immediate corollary from the previous two results and their proofs. \smallskip 

\begin{cor}\label{coro1} If $\bS=0$, then $n\leqslant 3$. In case $n=3$, the loop free part $Q^\circ$ of $Q$ is 
one of the following two quivers 
$$\xymatrix@C=0.5cm{ &\circ \ar[rd]\ar@<+0.13cm>[ld] & \\ \circ \ar[ru]\ar@<-0.13cm>[rr] && \circ \ar[ll]\ar@<+0.13cm>[lu]} \ 
\qquad\xymatrix@R=0.5cm{ \\ \mbox{ or } \\} 
\xymatrix@R=0.5cm{&&&& \\ 
& \circ \ar@<-0.13cm>[r] & \circ \ar[l] \ar@<-0.13cm>[r] & \circ \ar[l] & \\ 
&&&& \\}$$ 
Moreover, there are no loops in the first case, and at most one loop at each of $1,3$, in the second.  
\end{cor}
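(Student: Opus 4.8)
The plan is to read off everything from the two lemmas just proved, together with the loop bookkeeping from Section~\ref{sec:2}. First I would record the effect of the hypothesis $\bS=0$: here the positive part of $\bS$ vanishes, so the reduced quiver $\bbQ=\bfQ_{\bS}$ is empty and $Q^{\circ}=\bbQ\sqcup\overline{E}=\overline{E}$. Since $\La$ is indecomposable, $Q$ (hence $\overline{E}$, hence $E$) is connected, so $E$ is its own unique connected component. By Lemma~\ref{lem1}, every vertex of $E$ has degree at most $2$, so $E$ is a single line or a single cycle; by Lemma~\ref{lem2}, the unique component $E$ has at most $3$ vertices, whence $n\leqslant 3$ (the cases $n\leqslant 2$ being covered by Proposition~\ref{gabriel}).

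For the shape when $n=3$: a connected graph on three vertices with maximal degree $2$ is either the path on $1-2-3$ or the triangle. Passing to $\overline{E}$ replaces each edge by a $2$-cycle, which yields exactly the two quivers displayed in the statement (the graph is simple by Proposition~\ref{gabriel}, so no multiple $2$-cycles occur).

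It then remains to locate the loops, using that tameness already forces at most one loop per vertex (Section~\ref{sec:2}). In the triangle case every vertex is a non-leaf, and, crucially, each of its two neighbours has a further neighbour, namely the third vertex. Hence the wild subcategory in covering produced in the first step of the proof of Lemma~\ref{lem2} (a loop at a non-leaf $i$ whose neighbour $z$ carries an outgoing arrow $\gamma$) is available, and rules out every loop. In the path case the two endpoints $1,3$ are leaves and may each carry a single loop, while the middle vertex $2$ must be loop-free; this is the one point where the argument of Lemma~\ref{lem2} does not transfer directly, since both neighbours $1,3$ of $2$ are leaves and provide no arrow $\gamma$. I would handle it by a separate covering argument: resolving a putative loop $\rho\colon 2\to 2$ in a Galois covering unfolds vertex $2$ into an infinite line $\cdots\to 2_{k}\to 2_{k+1}\to\cdots$, with each $2_{k}$ carrying a pendant source $1_{k}\to 2_{k}$ and a pendant sink $2_{k}\to 3_{k}$ coming from the two $2$-cycles; three consecutive copies $2_{0},2_{1},2_{2}$ together with their six pendants then span a full subcategory whose underlying graph is a tree with a vertex of degree $4$ and branches of length $2$, hence a wild hereditary algebra, a contradiction.

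The routine reductions ($n\leqslant 3$ and the two graph shapes) are immediate from Lemmas~\ref{lem1} and~\ref{lem2}. The only genuinely delicate step, and the one I expect to be the main obstacle, is the loop placement in the path case: the blanket "no loop at a non-leaf" argument of Lemma~\ref{lem2} relies on a neighbour-of-a-neighbour that simply does not exist at the middle of a path on three vertices, so excluding a loop at vertex $2$ requires the supplementary covering computation above, where one must verify that the relevant length-$2$ paths survive in the chosen quotient of $\La/J^{3}$ so that the exhibited tree really is a full (hereditary) subcategory in the covering.
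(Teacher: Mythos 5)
Your reduction is exactly the paper's: the corollary is derived there in one line from $Q^\circ=\overline{E}$, connectedness of $Q$, and Lemmas \ref{lem1} and \ref{lem2} ``and their proofs''. So the bound $n\leqslant 3$, the two shapes of $\overline{E}$, the absence of loops in the triangle case (where each neighbour of a non-leaf does have a further neighbour supplying the arrow $\gamma$ needed for the wild tree in the proof of Lemma \ref{lem2}), and the ``at most one loop per vertex'' bound are all obtained as you obtain them. You are also right to isolate the middle vertex of the path as the one point that is not covered verbatim: the wild subcategory in the first step of Lemma \ref{lem2} needs an arrow $\gamma$ leaving a neighbour of $i$ towards a third vertex, and for the path on $1,2,3$ with no loops at $1,3$ no such arrow exists; without $\gamma$ the tree collapses to $\wt{\bD}_5$, which is tame. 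The paper itself simply points back to the proof of Lemma \ref{lem2} for this point (here and again in the proof of Theorem \ref{thm1}), so your flag is a legitimate one.

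However, your patch for that case does not work as written. First, the nine-vertex caterpillar you exhibit contains directed paths of length $3$ and $4$ (e.g. $1_0\to 2_0\to 2_1\to 2_2\to 3_2$); these vanish in $B=\La/J^3$, the ambient algebra for all the covering arguments in this paper, so the displayed quiver cannot be a full \emph{hereditary} subcategory there. Second, and more seriously, the length-two compositions through the resolved loop, namely $\alpha_1\rho$, $\beta_2\rho$, $\rho\beta_1$, $\rho\alpha_2$ and $\rho^2$, are not prevented from lying in minimal relations: Lemma \ref{lem:3.3} only forbids $w\prec I$ when the closing arrow is absent, and here the closing arrows ($\beta_1:2\to 1$, $\alpha_1:1\to 2$, $\beta_2$, $\alpha_2$, and $\rho$ itself) all exist. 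So you cannot assert that your tree carries no relations; and if you drop the loop-arrows to avoid the issue, what survives is only the four-pointed star $\wt{\bD}_4$ at a copy of $2$, which is tame. Closing this case therefore requires genuine input from the relation/syzygy structure --- the identities $p_i^-=p_i^+$ together with Lemma \ref{lem:3.1}, the propagation lemmas \ref{lem:3.4}--\ref{lem:3.7}, or an identification of a wild \emph{one-relation} algebra from Ringel's list after determining which of the above compositions do lie in $I$ --- precisely the verification you flag at the end but do not carry out.
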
 \medskip 

If $C_\La$ is non-singular, then $\Adj_{Q_\La}=0$, by $(\Delta)$. Hence, as a special case of the above we get the 
following result. 

\begin{cor}\label{coro2} If $\Lambda$ has a non-singular Cartan matrix (e.g. if $\Lambda$ is an algebra of 
quaternion type), then $Q=Q_\Lambda$ has at most $3$ vertices. Moreover, $Q$ is one of the quivers described 
in \ref{gabriel} and \ref{coro1}. \end{cor}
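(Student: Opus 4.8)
The plan is to reduce the statement to the zero-shadow case already settled in Corollary \ref{coro1}, by feeding the hypothesis into the fundamental matrix identity $(\Delta)$. First I would invoke the equation $\Adj_{Q_\Lambda}\cdot C_\Lambda=0$ of Theorem \ref{thm:motivation}, which holds for any symmetric algebra with $4$-periodic simples and hence for our $\Lambda$. Writing $A=\Adj_{Q_\Lambda}$ and $C=C_\Lambda$, the assumption that $C$ is non-singular means $C$ is invertible, so multiplying $AC=0$ on the right by $C^{-1}$ forces $A=0$. Thus the shadow of $\Lambda$ vanishes, $\bS=\bS_\Lambda=\Adj_{Q_\Lambda}=0$, and $\Lambda$ lands squarely in the zero-shadow situation.

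Once $\bS=0$ is in hand, the bound on the number of vertices is immediate: Corollary \ref{coro1} asserts precisely that $n\leqslant 3$ in this case. For the explicit description of $Q$ I would then argue case by case on $n\in\{1,2,3\}$. When $n=3$, Corollary \ref{coro1} already exhibits the two possible loop-free parts $Q^\circ$ together with the admissible loops, so $Q$ is among the quivers listed there. When $n=2$, Proposition \ref{gabriel} enumerates the four possible quivers directly. The remaining case $n=1$ is degenerate: $Q$ then consists of a single vertex carrying a bouquet of loops, which under the tameness constraints recalled in Section \ref{sec:2} is a trivially described quiver. Finally, the parenthetical claim about algebras of quaternion type requires no extra work, since by definition such algebras have non-singular Cartan matrix and hence satisfy the hypothesis.

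Since all the genuine combinatorial analysis was already carried out in Lemmas \ref{lem1} and \ref{lem2} and in Corollary \ref{coro1}, I do not expect any serious obstacle here: the corollary is essentially a one-line deduction from $(\Delta)$ followed by bookkeeping. The only point demanding a little care is to remember that Corollary \ref{coro1} reports the loop-free part $Q^\circ$ rather than $Q$ itself, so the loop information (at most one loop at each end vertex in the line case, none in the triangle case) must be read off from its full statement in order to pin down $Q$ exactly.
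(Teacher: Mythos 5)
Your proposal is correct and follows exactly the paper's route: the paper likewise derives $\Adj_{Q_\Lambda}=0$ from the identity $(\Delta)$ using non-singularity of $C_\Lambda$, and then cites Corollary \ref{coro1} and Proposition \ref{gabriel} for the list of quivers. Your extra attention to the degenerate case $n=1$ and to reading off the loop data from $Q^\circ$ is sensible bookkeeping but does not change the argument.
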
 \medskip 

We mention that the above result provides an extension of known results from \cite{QT1,QT2} (see also \cite{nosim}). 
Whilst the restriction on the number of vertices is obtained by similar methods as in \cite{nosim} (based on covering techniques), 
the list of possible quivers is an immediate consequence of the tools we introduced. They provide an elementary and 
elegant alternative for relatively sophisticated machinery used in \cite{QT1,QT2}, where for instance, it was necessary 
to use Ollson's formula \cite{Ollson} for blocks with generalized quaternion defect groups to calculate the decomposition 
numbers. Moreover, our result holds in a wider class of algebras with non-singular Cartan matrices. As a consequence, we 
deduce that there are no GQT algebras with non-singular Cartan matric and more than $4$ simple modules. \medskip 

\begin{rem} \normalfont We point out that the zero shadow case gives very restrictive shape of the Gabriel quiver: 
$\Adj_Q=0$ $\Leftrightarrow$ $Q=Q_\Lambda$ is the double quiver of some graph (mostly simple) and at most $3$ 
vertices! \smallskip 

This resembles also the case of preprojective algebras, which are given by 'double quivers' (see \cite{CBH}), 
but these 'double quivers' are constructed in a slightly different manner, that is, from a quiver instead of a 
graph. We mention results of Schofield \cite{Scho} and Erdmann-Snashall \cite{ErSn}, showing that every preprojective 
algebra $\mathcal{P}_\Delta$ of Dynkin type $\Delta$ is a periodic algebra of period $1,2,3$ or $6$, and it is of tame 
representation type if and only if $\Delta=\mathbb{A}_n$, for $n\leq 5$, or $\Delta=\mathbb{D}_4$. In other words, 
for preprojective algebras, tameness implies that it cannot be too big. These algebras are given also by double 
quivers (in our sense), but the difference is that the period of simples is not $4$. \end{rem} \medskip

Now we can prove the first part of The Reconstruction Theorem. \medskip 

\begin{thm}\label{thm1} If $\bbQ$ is non-empty, then $E$ is a disjoint union of edges 
(and isolated vertices). \end{thm}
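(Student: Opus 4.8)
The plan is to combine the two preceding lemmas with a connectivity argument that forces an essential arrow onto any oversized component of $E$, and then to produce a wild subcategory in covering, exactly in the spirit of the proofs of Lemmas \ref{lem1} and \ref{lem2}. By Lemma \ref{lem1} every connected component of $E$ is a line or a cycle, and by Lemma \ref{lem2} it has at most three vertices; hence each component is an isolated vertex, a single edge, a three-vertex line, or a triangle ($3$-cycle). Since isolated vertices and single edges are exactly what the statement allows, it remains to rule out the two three-vertex shapes under the hypothesis $\bbQ\neq\emptyset$. So I would assume, towards a contradiction, that some component $G$ of $E$ has vertex set $\{1,2,3\}$ and is either the line $1-2-3$ or the triangle on $\{1,2,3\}$.

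First I would record that $G$ cannot be a union of connected components of the whole quiver $Q$. Indeed, if no arrow of $Q$ other than the $2$-cycles of $\overline{G}$ (and possible loops) met $\{1,2,3\}$, then $\overline{G}$ would be a connected component of $Q$; as $\Lambda$ is indecomposable this forces $Q^\circ=\overline{G}$, whence $\overline{E}=\overline{G}$ and $\bbQ=\emptyset$, contrary to hypothesis. Therefore some vertex of $G$ is the source or target of an arrow $\gamma\notin\overline{G}$. By tameness (no subquiver of type $K_2^\pm$), $\gamma$ cannot be parallel to one of the existing $2$-cycles, so either $\gamma$ lies in $\bbQ$ and joins a vertex of $G$ to a vertex outside $\{1,2,3\}$, or (only in the line case) $\gamma$ is the essential arrow joining the non-adjacent pair $\{1,3\}$. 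In either case $\gamma$ plays the role of the \emph{missing fourth vertex} that was unavailable for three-vertex components in the proof of Lemma \ref{lem2}.

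With $\gamma$ in hand I would rerun the relation bookkeeping of Lemma \ref{lem2}: as there, Lemma \ref{lem:3.3} gives $\alpha_i\beta_i\nprec I$ and $\beta_i\alpha_i\nprec I$ along the $2$-cycles, and there are no loops at the non-leaf vertices (the middle vertex of the line, or any vertex of the triangle), these being the only places a loop could itself create an immediate wild piece. I would then split into cases according to the shape of $G$ (line versus triangle), the vertex to which $\gamma$ attaches (a leaf or the middle vertex), and the orientation of $\gamma$, and in each case exhibit an explicit wild hereditary subcategory in covering of type $\wt{\wt{\bD}}_n$ or $\wt{\wt{\bE}}_6$, assembled from $\gamma$, the $2$-cycles of $\overline{G}$ resolved in the covering, and the compositions just shown to avoid $I$. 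Each such subcategory contradicts tameness of $\Lambda$, ruling out the three-vertex component and completing the proof.

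The hard part will be the triangle case together with the case analysis on the placement and direction of $\gamma$: the triangle carries three $2$-cycles and hence more internal relations than a line, so I expect to need the Triangle Lemma \ref{lem:3.4} (and repeated use of Lemma \ref{lem:3.3}) to pin down which length-two paths lie in $I$ before the wild configuration can be read off. I must also not forget the subcases where a leaf of the line carries a loop, which are handled by resolving that loop in the covering exactly as in the proof of Lemma \ref{lem2}.
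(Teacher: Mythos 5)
Your skeleton coincides with the paper's proof: reduce via Lemmas \ref{lem1} and \ref{lem2} to components with at most three vertices, note that isolated vertices and single edges are allowed, and kill the three-vertex line and the triangle by using connectivity of $Q$ (plus $\bbQ\neq\emptyset$) to force an ``extra'' arrow $\gamma$ meeting the component, then derive a wild subcategory in covering. You also correctly isolate the one subcase where the extra arrow need not leave $G_0$, namely the line with an arrow between the non-adjacent pair $\{1,3\}$, and you correctly note that loops can survive only at leaves of the line. All of this is exactly how the paper proceeds.

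The gap is that the decisive step --- actually exhibiting the wild configurations --- is deferred in every case, and this is where essentially all of the content of the paper's proof lies. Two places deserve explicit warning. First, in the line case with $Q_0=G_0$ and an arrow $\sigma:1\to 3$, ``rerunning the bookkeeping of Lemma \ref{lem2}'' is not enough: one needs $\alpha_1\alpha_2\nprec I$ (from Lemma \ref{lem:3.3} and the absence of an arrow $3\to 1$), and then a split on whether $\alpha_1\beta_1\prec I$ --- in the latter subcase Lemma \ref{lem:3.3} forces a loop at $1$, which must be resolved in the covering to build a different wild subcategory (the paper uses a $\wt{\wt{\bE}}_7$-type configuration in one branch and another explicit quiver in the other). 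Your sketch does not anticipate this dichotomy. Second, in the triangle case you expect to need the Triangle Lemma \ref{lem:3.4}; in fact the paper does not use it here --- once loops at $1,2,3$ are excluded, Lemma \ref{lem:3.3} alone gives $\alpha_k\beta_k,\beta_k\alpha_k\nprec I$ for all $k$, and a single uniform wild subcategory (built from $\gamma:i\to x$ and the three resolved $2$-cycles) finishes that case. So the plan is sound and the route is the paper's, but as written the proof is incomplete: without the explicit wild hereditary subcategories (and the loop/orientation subcases spelled out), the contradiction with tameness is asserted rather than established.
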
 

\begin{proof} Suppose $\bbQ\neq\emptyset$ or equivalently, $\bS\neq 0$, and let $G$ be a connected component of $E$. 
By Lemma \ref{lem2}, $G$ has at most $3$ vertices, so either it is an isolated vertex, if $g=|G_0|=1$, or an edge, if 
$g=2$, and for $g=3$, it is one of the following two graphs 
$$\xymatrix@R=0.2cm@C=0.4cm{ &\circ \ar@{-}[ld] & \\ \circ \ar@{-}[rr] && \circ\ar@{-}[lu]} \qquad 
\xymatrix@R=0.2cm{\\ \mbox{or} } \qquad 
\xymatrix@R=0.2cm{&& \\ \circ  \ar@{-}[r]& \circ  \ar@{-}[r] & \circ }$$ 
denoted by $F_1$ and $F_2$, respectively. We will prove that the third case is impossible by excluding both of the 
above graphs $F_1,F_2$. \medskip 

Suppose first that $G$ is a triangle $G\simeq F_1$ with the set of vertices $G_0=\{1,2,3\}$. Then $Q$ admits the 
following subquiver $\overline{G}$: 
$$\xymatrix@C=1.3cm@R=1cm{ & 3 \ar[rd]^{\beta_2}\ar@<+0.13cm>[ld]^{\alpha_3} & \\ 
1 \ar[ru]^{\beta_3} \ar@<-0.13cm>[rr]_{\alpha_1} && 
2 \ar[ll]_{\beta_1}\ar@<+0.13cm>[lu]^{\alpha_2}}$$ 
Exactly as in the proof of Lemma \ref{lem2}, one can show that there is no loop in $Q$ at vertices $1,2,3$ (of $G$). 
Consequently, using Lemma \ref{lem:3.3}, we obtain that $\alpha_i\beta_i,\beta_i\alpha_i\nprec I$, for any $1\leq i \leq 3$. 
\smallskip 

Since $\bbQ$ is non-empty, there is an arrow $\gamma\in Q_1$, which is not in $\overline{G}$. 
If $Q_0=G_0$, then we would obtain a wild subquiver in $Q$ of type $K_2^\pm$, so we can assume that $Q_0\setminus G_0$ 
is non-empty. Now, because $Q$ is connected, $\overline{G}$ must be connected to the rest part of $E$ by an 
arrow from $\bbQ$, and hence we conclude that one of the vertices $i\in G_0$ is joined with some other vertex 
$x\in Q_0\setminus G_0$ by an arrow $\gamma:i\to x$ or $\gamma:x\to i$. Because the arguments are dual, we will 
restrict ourselves only to the first case, i.e. arrow $\gamma:i\to x$. In this case, we obtain the 
following wild subcategory in covering (we identify vertices modulo $3$):  
$$\xymatrix{&& x &&& &&& x & \\ 
i & \ar[l]_{\beta_i} i+1 & \ar[l]_{\alpha_i} i \ar[u]_{\gamma} \ar[r]^{\beta_{i-1}} & i-1 & 
i+1 \ar[l]_{\alpha_{i+1}} \ar[r]^{\beta_i} & i & \ar[l]_{\alpha_{i-1}} i-1 \ar[r]^{\beta_{i+1}} & i+1 & 
\ar[l]_{\alpha_i} i \ar[r]^{\beta_{i-1}} \ar[u]_{\gamma} & i-1 } $$ 
Hence indeed, $G$ cannot be a triangle, and we are done in this case. \bigskip 

Finally, let $G$ be a line $G\simeq F_2$, that is, we have a subquiver in $Q$ of the form: 
$$\overline{G}= \xymatrix{1 \ar@<-0.1cm>[r]_{\alpha_1} & 2 \ar@<-0.1cm>[l]_{\beta_1} \ar@<-0.1cm>[r]_{\alpha_2}  
& 3 \ar@<-0.1cm>[l]_{\beta_2}}$$ 
and $Q$ does not admit a loop at $2$ (see the proof of Lemma \ref{lem2}). Using Lemma \ref{lem:3.3}, we conclude 
that $\beta_1\alpha_1,\alpha_2\beta_2\nprec I$, and hence, in covering there is a tame (hereditary) subcategory 
of the following shape. 
$$\xymatrix@R=0.4cm{\\ H= \\ } \qquad  
\xymatrix@R0.4cm{ & 1 \ar[rd]^{\alpha_1}& \\ 
2 \ar[ru]^{\beta_1} \ar[rd]_{\alpha_2} && 2 \\ 
& 3 \ar[ru]_{\beta_2} & } $$ \medskip

Now, observe that we cannot have $Q_0=G_0$. Indeed, for $Q_0=G_0$ we have an arrow $\sigma\in\bbQ$, so either 
$Q$ contains a wild subquiver of type $K_2^\pm$ or $\sigma$ is an arrow between vertices $1$ and $3$. In case 
$3\to 1$ the arguments are dual, so assume we have an arrow $\sigma:1\to 3$ (but no arrow $3\to 1$). In this case, 
using Lemma \ref{lem:3.3}, we infer that $\alpha_1\alpha_2\nprec I$, so $\Lambda$ admits the following wild 
hereditary subcategory of type $\wt{\wt{\bE}}_7$ in covering.  
$$\xymatrix{ &&&& 1 &&& \\ 
3 \ar[r]^{\beta_2} & 2 & \ar[l]_{\alpha_1} 1 \ar[r]^{\sigma} & 3 & \ar[l]_{\alpha_2} 2 \ar[u]^{\beta_1} & 
\ar[l]_{\alpha_1} 1 \ar[r]^{\sigma} & 3 & \ar[l]_{\alpha_2} 2 }$$ 
if $\alpha_1\beta_1\nprec I$, and for $\alpha_1\beta_1\prec I$, we have a loop at $\rho$ at $1$, so we can get 
a wild hereditary subcategory of the following form 
$$\xymatrix@R=0.4cm{&& 2\ar[d]_{\beta_1} &&& \\ && 1 &&& \\ 
3 &\ar[l]_{\alpha_2} 2 & \ar[l]_{\alpha_1} 1 \ar[u]^{\rho} \ar[r]^{\sigma} & 3 & \ar[l]_{\alpha_2} 2 & \ar[l]_{\alpha_1} 1}$$ 

Hence $Q_0\setminus G_0$ non-empty. Because $Q$ is connected, there is an arrow $\sigma:i\to j$, 
with one of $i,j$ in $\{1,2,3\}$ and the second does not belong to $G_0$. If $i=2$ or $j=2$, then $H$ may be 
extended to the one of the following two wild subcategories: 
$$\xymatrix@R0.4cm{& & 1 \ar[rd]^{\alpha_1}& \\ 
j & \ar[l]_{\sigma} 2 \ar[ru]^{\beta_1} \ar[rd]_{\alpha_2} && 2 \\ 
& & 3 \ar[ru]_{\beta_2} & } \qquad \xymatrix@R=0.4cm{\\ \mbox{or} \\ } \qquad 
\xymatrix@R0.4cm{ & 1 \ar[rd]^{\alpha_1}& & \\ 
 2 \ar[ru]^{\beta_1} \ar[rd]_{\alpha_2} && 2 & \ar[l]_{\sigma} i \\ 
 & 3 \ar[ru]_{\beta_2} & & }$$ 
Let now $\sigma$ be an arrow $i\to j$ with $i=1$ or $3$ (and $j\notin G_0$). If $i=1$, then $\beta_1\sigma\nprec I$, 
since otherwise, due to Lemma \ref{lem:3.3}, we would get an arrow $j\to 2$, and we are in the previous case. 
Similarily, if $i=3$, then $ \alpha_2\sigma\nprec I$. As a result, if there is an arrow $\sigma:i\to j$ with 
$i=1$ or $3$, then $\Lambda$ admits one of the following wild subcategories (in covering): 
$$\xymatrix@R0.4cm{j &  & \ar[ll]_{\sigma} 1 \ar[rd]^{\alpha_1}& \\ 
 &  2 \ar[ru]^{\beta_1} \ar[rd]_{\alpha_2} && 2 \\ 
& & 3 \ar[ru]_{\beta_2} & } \qquad \xymatrix@R=0.4cm{\\ \mbox{or} \\ } \qquad 
\xymatrix@R0.4cm{ & 1 \ar[rd]^{\alpha_1}& & \\ 
 2 \ar[ru]^{\beta_1} \ar[rd]_{\alpha_2} && 2 &   \\ 
 & 3 \ar[ru]_{\beta_2} \ar[rr]_{\sigma} & & j }$$ 
Using dual arguments, one can prove that an arrow $\sigma:i\to j$ with $j=1$ or $3$ leads to analogous wild 
subcategories. This completes the proof. \end{proof} \medskip 

Now, we will investigate which vertices of $Q$ are joined by an edge in $E$. Let $e:\xymatrix{i\ar@{-}[r] & j}$ 
be a fixed edge of $E$ and $\alpha:i\to j$, $\beta:j\to i$ the arrows in $Q_1$ induced from $e$. Moreover, we 
denote by $\alpha_1,\dots,\alpha_q$ all the arrows in $\bbQ$ starting at $i$, where $\alpha_k:i\to a_k$, and 
by $\beta_1,\dots,\beta_p$ all the arrows in $\bbQ$ ending at $i$ with $\beta_k:b_k\to i$. As a result, $i$ 
is a $(p,q)$-vertex of $\bbQ$. Similarily, we have $r$ arrows $\delta_k: d_k\to j$ in $\bbQ$ ending at $j$, for 
$k\in\{1,\dots,r\}$, and $s$ arrows $\gamma_k:j\to c_k$ in $\bbQ$ starting at $j$, for $k\in\{1,\dots,s\}$, so 
that $j$ is a $(r,s)$-vertex of $\bbQ$. \smallskip 

\begin{lemma}\label{lem3} $i,j$ are at most $2$-regular vertices in $\bbQ$. \end{lemma}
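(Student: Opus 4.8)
The plan is to reduce everything to a single out-degree bound and then exploit symmetry. Since the edge $e$ is symmetric in $i$ and $j$, and since passing to $\La^{\op}$ interchanges the roles of sources and sinks while preserving the standing hypotheses (symmetric algebra, all simples $4$-periodic), it suffices to prove $q\leqslant 2$; the inequalities $p\leqslant 2$ and $r,s\leqslant 2$ then follow by duality and by relabelling $i\leftrightarrow j$. So I would assume $q\geqslant 3$ and derive a contradiction by producing a wild subcategory in covering, exactly in the spirit of Lemmas \ref{lem1} and \ref{lem2}. Fix three arrows $\alpha_1,\alpha_2,\alpha_3$ of $\bbQ$ starting at $i$, say $\alpha_k\colon i\to a_k$, together with the $2$-cycle arrows $\alpha\colon i\to j$ and $\beta\colon j\to i$ coming from $e$; the argument then splits according to whether $Q$ carries a loop at $i$.

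If there is a loop $\rho\colon i\to i$, then in a suitable Galois covering the loop resolves into an arrow joining two consecutive lifts of $i$, so the chosen lift of $i$ becomes a source of the five arrows $\alpha_1,\alpha_2,\alpha_3,\alpha,\rho$ pointing to five distinct vertices. This is a source of five arrows, whose path algebra $KS_5^+$ is wild, and we conclude that $\La$ is wild, a contradiction. If there is no loop at $i$, I would first note that $\alpha\beta\nprec I$: indeed $\alpha\beta$ is a path $i\to i$, so $\alpha\beta\prec I$ would force, by Lemma \ref{lem:3.3}, an arrow $i\to i$, i.e. a loop at $i$. Using $\alpha\beta\nprec I$, the arm $i\xrightarrow{\alpha} j$ prolongs by $\beta\colon j\to i'$ to a second lift $i'$ of $i$, and I would read off the full subcategory on $i$, the targets $a_1,a_2,a_3$ and the lifts $j,i'$.

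The underlying graph of that configuration is a star with centre $i$ of degree four (arms to $a_1,a_2,a_3,j$) in which the $j$-arm has been extended by one further edge $j\,\mbox{---}\,i'$; since it strictly contains $\tilde{\bD}_4$ it is a wild graph, and because the only length-two path inside the configuration is $\alpha\beta\nprec I$, the subcategory is hereditary, hence a wild hereditary algebra. This again contradicts tameness of $\La$, so $q\leqslant 2$, and the lemma follows by the symmetry and duality recorded at the outset.

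The main obstacle I anticipate is entirely on the covering side: one must ensure that the vertices in each displayed configuration are genuinely distinct lifts (so that the exhibited quiver really is the claimed star, and in particular that $a_1,a_2,a_3$ are separated from the lifts of $j$), and that the relevant compositions are or are not in $I$ as required for the subcategory to be hereditary. As in Lemmas \ref{lem1}--\ref{lem2}, these points are dealt with by the standard covering reductions recalled in Section \ref{sec:2} (via \cite{DS1,DS2}) together with Lemma \ref{lem:3.3}; the genuinely new input is only the observation that a \emph{single} $2$-cycle already suffices to upgrade a degree-three source of $\bbQ$ into a forbidden wild piece.
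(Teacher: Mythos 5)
Your proposal is correct and follows essentially the same route as the paper: exclude a loop at $i$ via the forbidden star $S_5^+$ in covering, deduce $\alpha\beta\nprec I$ from Lemma \ref{lem:3.3}, and then exhibit the wild hereditary star of type $\wt{\wt{\bD}}_4$ on $a_1,a_2,a_3$ and the arm $i\to j\to i$, with the remaining bounds obtained by reversing arrows and swapping $i$ and $j$. The paper's proof is exactly this argument, merely stated without spelling out the duality reduction as explicitly as you do.
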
 

\begin{proof} Indeed, if $q\geqslant 3$, then there are at least $4$ arrows in $Q$ starting from $i$ 
($\alpha_1,\dots,\alpha_q\in\bbQ$ and $\alpha$ not in $\bbQ$). Moreover, then there is no loop at $i$, 
since otherwise, we would get a star $S_5$ in covering (then $\Lambda$ is wild). Consequently, applying 
Lemma \ref{lem:3.3}, we obtain $\alpha\beta\nprec I$ (and $\beta\alpha\nprec I$, if there is no loop 
at $j$). But then $\Lambda$ admits a wild subcategory (in covering) of the form. 
$$\xymatrix@R=0.4cm{ & a_1 && \\ 
a_2 & i \ar[l]_{\alpha_2} \ar[u]_{\alpha_1} \ar[d]_{\alpha_3} \ar[r]_{\alpha} & \ar[r]_{\beta} j & i \\ 
& a_3 && } $$ 
This proves that $q\leqslant 2$. If $p\geqslant 3$, we get a wild subcategory of the same type (but with 
reversed arrows), so indeed $p,q\leqslant 2$, i.e. $i$ is at most $2$-regular. Similarily, we prove that 
$j$ is at most $2$-regular, that is $r,s\leqslant 2$. \end{proof} \medskip 

In the rest part of this section we assume that the algebra $\La$ is of infinite representation type. 

\begin{lemma}\label{lem4} If one of $i,j$ is a $2$-vertex in $\bbQ$, then the second one is either isolated or also 
a $2$-vertex in $\bbQ$. In the second case, the quiver $Q$ has the following form 
$$\xymatrix@R=0.4cm{& \bullet \ar[rdd] & \\ & \bullet \ar[rd] & \\ 
\bullet \ar[ru]\ar[ruu] \ar@<-0.35ex>[rr] && \ar[ld]\ar[ldd] \ar@<-0.35ex>[ll] \bullet \\ 
& \bullet \ar[lu] & \\ & \ar[luu] \bullet & }$$ \end{lemma}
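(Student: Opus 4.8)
The plan is to pin down the entire neighbourhood of the $2$-cycle $\{\alpha,\beta\}$ by combining the degree bound of Lemma~\ref{lem3}, the triangle-creating mechanism of Lemma~\ref{lem:3.3} together with the Triangle Lemma~\ref{lem:3.4}, and exclusions of wild full subcategories in Galois coverings (as in the proofs of Lemmas~\ref{lem1}, \ref{lem2} and Theorem~\ref{thm1}). Since both the hypothesis and the conclusion are symmetric in $i$ and $j$, I may assume that $i$ is the $2$-vertex; thus in $Q$ the vertex $i$ is a $(3,3)$-vertex, with out-arrows $\alpha,\alpha_1,\alpha_2$ and in-arrows $\beta,\beta_1,\beta_2$.

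First I would carry out the preliminary reductions. A loop $\rho:i\to i$ is impossible: resolving $\rho$ in a suitable covering produces a copy of $i$ incident to five arrows --- the two copy-edges coming from $\rho$ and the three genuine arrows $\alpha,\alpha_1,\alpha_2$ --- whose underlying graph is a star properly containing $\wt{\bD}_4$ and is therefore a wild hereditary full subcategory. With no loop at $i$, Lemma~\ref{lem:3.3} applied to $i\xrightarrow{\alpha}j\xrightarrow{\beta}i$ forces $\alpha\beta\nprec I$, and dually $\beta\alpha\nprec I$. I would also record that $a_1,a_2,b_1,b_2$ are pairwise distinct and different from $i,j$: any coincidence, or any $a_k=j$ or $b_k=j$, would create a double arrow at $i$ which, together with $\alpha$ or $\beta$, yields a forbidden subquiver of type $K_2^\pm$. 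In particular each $\alpha_k$ is the unique arrow $i\to a_k$, so the Triangle Lemma is applicable.

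The heart of the argument is an all-or-nothing dichotomy for the two outgoing arrows of $i$. By Lemma~\ref{lem:3.3}, $\beta\alpha_k\prec I$ holds if and only if there is an arrow $a_k\to j$, i.e. a triangle $j\xrightarrow{\beta}i\xrightarrow{\alpha_k}a_k\to j$; once such a triangle exists, Lemma~\ref{lem:3.4} propagates the relation around it. I would then show, using the infinite representation type hypothesis, that a half-formed configuration is impossible: if exactly one of $a_1,a_2$ admitted an arrow to $j$ (say $\beta\alpha_1\prec I$ but $\beta\alpha_2\nprec I$), then the surviving path $j\xrightarrow{\beta}i\xrightarrow{\alpha_2}a_2$, combined with the closed triangle on $a_1$ and an incoming arrow $\beta_l$ of $i$, would be exhibited as a wild hereditary full subcategory in covering (of type $\wt{\wt{\bE}}$), contradicting tameness. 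The analogous statement for the incoming arrows $\beta_1,\beta_2$ of $i$, via $\beta_l\alpha\prec I\Leftrightarrow$ an arrow $j\to b_l$, follows from the dual construction. This systematic location of the correct minimal wild pieces for each half-formed diamond is the step I expect to be the main obstacle.

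With the dichotomy in hand the proof concludes by a short case split. If no arrow $a_k\to j$ exists, then every incoming arrow $d\to j$ of $\bbQ$ would, via $\delta\beta\prec I$ and Lemma~\ref{lem:3.3}, yield an arrow $a_k\to j$ contrary to assumption, or else leave the path $d\to j\to i$ surviving and produce a wild subcategory; thus $j$ has no incoming $\bbQ$-arrow, and since $\bbQ=\bfQ_\bS$ has neither source nor sink, $j$ is isolated in $\bbQ$ --- the first alternative. Otherwise both arrows $a_1\to j$ and $a_2\to j$ are present; being distinct arrows of $\bbQ$ into $j$, Lemma~\ref{lem3} forces $r=2$ with $\{d_1,d_2\}=\{a_1,a_2\}$, and dually $s=2$ with $\{c_1,c_2\}=\{b_1,b_2\}$ and arrows $j\to b_1,\,j\to b_2$. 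Hence $j$ is a $2$-vertex, and the six distinct vertices $i,j,a_1,a_2,b_1,b_2$ carry exactly the $2$-cycle, the two upper triangles $i\to a_k\to j$ and the two lower triangles $j\to b_k\to i$. Since each $a_k,b_k$ is then a $1$-vertex lying on a triangle, Lemma~\ref{lem:3.5} together with the degree count leaves no room for further arrows or loops, so $Q$ is precisely the displayed block --- the second alternative.
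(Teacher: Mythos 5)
Your preliminary reductions (no loop at $i$, hence $\alpha\beta\nprec I$ and $\beta\alpha\nprec I$ via Lemma~\ref{lem:3.3}, distinctness of $a_1,a_2,b_1,b_2$) agree with the paper, as does using the no-source/no-sink property of $\bbQ$ to finish the isolated case. But the core of your argument rests on the equivalence ``$\beta\alpha_k\prec I$ if and only if there is an arrow $a_k\to j$'', and only the forward implication follows from Lemma~\ref{lem:3.3}; the converse needs the Triangle Lemma~\ref{lem:3.4}, which can only be applied once some relation around the triangle $i\to a_k\to j\to i$ is already known. The paper manufactures exactly this input: from $\alpha\beta\nprec I$ it extracts a tame hereditary subcategory of type $\wt{\bD}_6$ through $i$, and the requirement that it not extend to a wild $\wt{\wt{\bD}}_6$ forces both $|a_k^-|=|b_l^+|=1$ and the condition $(\nabla)$ that \emph{every} composition $\alpha_k\eta$ with $\eta\in a_k^+$ (and every $\sigma\beta_l$ with $\sigma\in b_l^-$) lies in a minimal relation. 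Everything downstream is driven by $(\nabla)$ together with the identities $p_{a_k}^-=p_{a_k}^+$ and Lemmas~\ref{lem:3.1} and \ref{lem:3.2}: these bound $|a_k^+|$, force the targets of $a_k^+$ to be either $\{b_1,b_2\}$ or $\{j\}$, and hence produce the paper's actual dichotomy --- some $a_k$ or $b_l$ fails to be $1$-regular (yielding a block with arrows $a_k\to b_l$ and $j$ isolated) versus all of them $1$-regular (yielding the displayed six-vertex block, with $t(\eta_k)=j$ forced by Lemma~\ref{lem:3.2}). Your proposal never invokes the dimension-vector identities or Lemmas~\ref{lem:3.1}--\ref{lem:3.2}, and the wild piece you hope will kill the ``half-formed'' configuration is never exhibited; this is precisely the step you flag as the obstacle, and it is where the proof actually lives.

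The endgame also has a gap: in the second alternative you assert that $a_1,a_2,b_1,b_2$ are $1$-vertices and cite Lemma~\ref{lem:3.5}, but that lemma constrains the neighbours of a vertex already known to be $1$-regular; it cannot be used to prove $1$-regularity. Excluding, say, an extra arrow $a_1\to b_1$ coexisting with $a_1\to j$ again requires the $(\nabla)$-relations plus the computation of $p_{a_1}^+=p_{a_1}^-$ against Lemma~\ref{lem:3.1}. So while your target dichotomy is consistent with the true statement, the proposal as written does not contain the mechanisms needed to establish it.
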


\begin{proof} Without loss of generality, let $i$ be a $2$-vertex in $\bbQ$. Then $j$ is an $(r,s)$-vertex in 
$\bbQ$ with $r,s\leqslant 2$, by Lemma \ref{lem3}. We will show that either $r=s=0$ or $r=s=2$. \smallskip 

First, observe that there is no loop at $i$, because a loop $\rho\in Q_1$ at $i$ gives the following wild 
subcategory in covering: 
$$\xymatrix@R=0.4cm{ & a_1 && \\ 
a_2 & i \ar[l]_{\alpha_2} \ar[u]_{\alpha_1} \ar[d]_{\alpha} \ar[r]_{\rho} &  i & \ar[l]_{\beta_1} b_1 \\ 
& j && } $$ 
Consequently, using Lemma \ref{lem:3.3} we get that $\alpha\beta\nprec I$. Note also that $a_1\neq a_2$ and $b_1\neq b_2$, 
since otherwise we would get a wild subquiver of type $K_2^\pm$. \medskip 

Further, observe that $\alpha\beta\nprec I$ yields the following tame hereditary subcategory of type 
$\wt{\bD}_6$: 
$$(*) \qquad 
\xymatrix@R=0.4cm{& a_2 && b_2 \ar[d]^{\beta_2} & \\ 
a_1 & \ar[l]_{\alpha_1} i \ar[u]_{\alpha_2} \ar[r]^{\alpha} & j \ar[r]^{\beta} 
& i & \ar[l]_{\beta_1} b_1 }$$ \smallskip 

In particular, it follows that $a_k^-=b_l^+=1$, for any $k,l$, since otherwise, the above subcategory can 
be extended to a wild subcategory of type $\wt{\wt{\bD}}_6$. Applying the same argument, one can show that the 
following condition holds: 

{\it $(\nabla)$ 
for any arrow $\eta\in a_k^+$ ($\sigma\in b_l^-$), we have $\alpha_k\eta\prec I$ ($\sigma\beta_l\prec I$).} \medskip 

Assume now that one of $a_k,b_l$ is not $1$-regular (in $Q$). Let $a=a_1$ satisfy $|a^+|\geqslant 2$, so that 
$a^+=\{\eta_1,\dots,\eta_t\}$, for $t\geqslant 2$. 

(1) First, observe that the targets of arrows $\eta_1,\dots,\eta_t$ must be pairwise different. Indeed, otherwise 
$a^+=\{\eta_1,\eta_2\}=b^-$, where $b=t(\eta_1)=t(\eta_2)$, hence every path in $Q$ from $i$ to $b$ has the form 
$u\alpha\eta_1$ or $u\alpha\eta_2$, for some path $u$, where $\alpha=\alpha_1$. On the other side, one of the paths 
$\alpha\eta_1$ or $\alpha\eta_2$ must be involved in a minimal relation of $I$, say $\alpha\eta\prec I$, and we get 
$$\alpha\eta=u\alpha\eta+v\alpha\bar{\eta},$$ 
where $\eta:=\eta_1$, $\bar{\eta}:=\eta_2$, and we can assume $u=0$ (otherwise, adjust $\alpha:=\alpha-z_1\alpha$). 
Further, we have $v\in J$, since $v\notin J$ forces a relation $\alpha\eta+\lambda\alpha\bar{\eta}=0$, $\lambda\in K^*$, 
and after adjusting $\eta,\bar{\eta}$ we get a new set of relations with $\alpha\eta=0$, while 
$\alpha\bar{\eta}\nprec I$, a contradiction with $(\nabla)$. Now, having $\alpha\eta=v\alpha\bar{\eta}$ with $v\in J$, 
we deduce that $\alpha\bar{\eta}\nprec I$. Indeed, if $\alpha\bar{\eta}\prec I$ would yield a relation 
$\alpha\bar{\eta}=u\alpha\eta+v'\alpha\bar{\eta}$ with $v'\in J$, which gives a contradiction: 
$$\alpha\bar{\eta}= u\alpha\eta+v'\alpha\bar{\eta} = (uv+v')\alpha\bar{\eta}=(uv+v')^2\alpha\bar{\eta} = \dots = 0, \ 
\mbox{so also } \alpha\eta=0,$$ 
and hence $\alpha\in\soc(\Lambda_\La)$. But $\alpha\bar{\eta}\nprec I$ contradicts $(\nabla)$, and therefore, we 
have proved that arrows $\eta_1,\dots,\eta_t$ have pairwise distinct targets. \smallskip 

(2) It follows from $(\nabla)$ that $t=2$, since any path $\alpha\eta_s\prec I$ gives $t(\eta_s)\in s(i^-)=\{b_1,b_2,j\}$, 
by Lemma \ref{lem:3.3}, and hence, if $t\geqslant 3$, then $t=3$, by (1), so using identity $p_a^-=p_a^+$, 
we get 
$$p_i=p_a^-=p_a^+= p_{t(\eta_1)}+p_{t(\eta_2)}+p_{t(\eta_3)}=p_{b_1}+p_{b_2}+p_{b_3}=p_i^-,$$ 
which contradicts Lemma \ref{lem:3.1}. Consequently, we obtain $t=2$, and again from relations in $(\nabla)$, we 
infer that $t(\eta_1)=b_1$ and $t(\eta_2)=b_2$ (up to relabelling). Moreover, using $p_a^-=p_a^+$, we deduce that 
$p_i=p_{b_1}+p_{b_2}$. But then $a_2$ must be also a $(1,2)$-vertex, since if it was a $1$-vertex, the unique arrow 
in $a_2^+$ would have target in $\{b_1,b_2\}$, by $(\nabla)$ and \ref{lem:3.3}, and this gives a contradiction: 
$p_{b_1}+p_{b_2}=p_i=p_{a_2}^-=p_{a_2}^+=p_{b_1}$ or $p_{b_2}$. \smallskip 

(3) Repeating the above argumentation for $a=a_2$, we also conclude that it has $a^+=\{\rho_1,\rho_2\}$ with both 
$\alpha_2\rho_1,\alpha_2\rho_2\prec I$, and $t(\rho_1)=b_1$ and $t(\rho_2)=b_2$. It follows also that $b_1,b_2$ 
are $(2,1)$-vertices (in $Q$) and $Q$ admits the following block  
$$\xymatrix@R=0.3cm{ \\ \\ \Gamma= \\ \\ \\} \qquad 
\xymatrix{^{b_2} \bullet \ar@/_30pt/[rdd] && \ar[ll] \bullet^{a_2} \ar[lld] \\ 
^{b_1}\bullet \ar[rd] && \bullet^{a_1} \ar[ll] \ar[llu] \\ 
& \circ_i \ar[ru] \ar@/_30pt/[ruu] & }$$ 
In particular, we have no arrows from $a_k$ to $j$ and from $j$ to $b_l$. Applying Lemma \ref{lem:3.3}, we obtain 
that $\beta\alpha_k\nprec I$ and $\beta_l\alpha\nprec I$ for all $k,l$. \smallskip 
 
Summing up the above discussion (1)-(3), we conclude that exaclty one of the following conditions hold: 
\begin{enumerate}
\item[(a)] either one of vertices $a_k,b_l$ is non-regular, and then $a_1,a_2$ are $(1,2)$-vertices, $b_1,b_2$ are 
$(2,1)$-vertices and $Q$ contains a block of the form $\Gamma$; 
\item[(b)] or all vertices $a_k,b_l$ are $1$-regular  
\end{enumerate} 
In both cases, all paths of length two are involved in minimal relations as in $(\nabla)$. \bigskip 

Now, we will prove that $j$ is isolated in the first case, or is $2$-regular, in the second. Indeed, in case 
(a) we have $\beta\alpha_k\nprec I$ and $\beta_l\alpha\nprec I$ for all $k,l$. Suppose to the contrary that there 
is an arrow $\gamma:j\to c$. By Lemma \ref{lem:3.3} we obtain $\alpha\gamma\nprec I$, since otherwise, one gets an 
arrow $c\to i$, hence $c=b_i$, a contradiction. As a result, we end up with the following wild hereditary subcategory 
(of type $\wt{\wt{\mathbb{D}}}_5$). 
$$\xymatrix@R=0.6cm{& a_2 & c & & \\ 
a_1 & \ar[l]_{\alpha_1} i \ar[u]_{\alpha_2} \ar[r]^{\alpha} & j \ar[u]^{\gamma} \ar[r]^{\beta} 
& i & \ar[l]_{\beta_1} b_1 }$$ 
Thus we cannot have an arrow $\gamma:j\to c$, i.e. $s=0$. Because $\bbQ$ has no sinks, we conclude that also $r=0$, 
and hence $j$ is indeed an isolated vertex of $\bbQ$ in case (a). \medskip 

Finally, assume that all vertices $a_k,b_k$ are $1$-regular, and fix arrows $\eta_k,\sigma_l$ with $a_k^+=\{\eta_k\}$ 
and $b_l^-=\{\sigma_l\}$. By $(\nabla)$, we obtain that $\alpha_k\eta_k\prec I$ and $\sigma_l\beta_l\prec I$, for 
any $k,l$, hence applying Lemma \ref{lem:3.3}, we deduce that there are arrows $t(\eta_k)\to i$ and $i\to s(\sigma_l)$. 
It follows that $t(\eta_k)\in\{b_1,b_2,j\}$. But $t(\eta_k)$ cannot be a $1$-vertex, since $s(\eta_k)=a_k$ 
is already $1$-regular, and we would obtain a contradiction with Lemma \ref{lem:3.2}. Consequently, we obtain 
that both $t(\eta_1)=t(\eta_2)=j$, and hence, the two arrows $\eta_1,\eta_2$ starting at $a_1,a_2$ are both ending 
at $j$. This means $j^-=r=2$ and (up to labelling) $d_1=a_1$ and $d_2=a_2$. Applying dual arguments for $s(\sigma_l)$, 
we can similarily prove that $s(\sigma_1)=s(\sigma_2)=j$, so $s=j^+=2$ and $b_1=c_1$ and $b_2=c_2$. As a result, 
it has been proved that in case (b), vertex $j$ is $2$-regular and $Q$ has the following block 
$$ \xymatrix@R=0.3cm{\\ \\ \\ \Gamma'= \\ \\ \\} \ 
\xymatrix@R=0.4cm{& \bullet^{a_1} \ar[rdd] & \\ & \bullet^{a_2} \ar[rd] & \\ 
_{i}\bullet \ar[ru]\ar[ruu] \ar@<-0.35ex>[rr] && \ar[ld]\ar[ldd] \ar@<-0.35ex>[ll] \bullet_{j} \\ 
& \bullet_{b_1} \ar[lu] & \\ & \ar[luu] \bullet_{b_2} & }$$  
Since $i,j$ are $2$-regular in $\bbQ$ and $a_k=d_k$, $b_l=c_l$ are $1$-regular in $Q$ (see also Lemma \ref{lem3}), 
we conclude that there are no more arrows in $Q$, that is, $Q=\Gamma'$ has the required form. \end{proof}

\medskip 

\begin{lemma}\label{lem5} If $i$ is a $(1,2)$-vertex and $j$ is not isolated, then $j$ is a $(2,1)$-vertex (in $\bbQ$), 
and $Q$ admits the following block 
$$\xymatrix@R=0.4cm{ &\bullet \ar[rdd] & \\ & \bullet\ar[rd]& \\ 
i \ar[ru] \ar[ruu] \ar@<-0.35ex>[rr]&& j \ar[ld] \ar@<-0.35ex>[ll]\\ & \circ \ar[lu] & }$$ 
\end{lemma}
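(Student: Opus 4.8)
The plan is to mimic closely the structure used in the proof of Lemma~\ref{lem4}, but now with the roles of $i$ and $j$ asymmetric: here $i$ is a fixed $(1,2)$-vertex (so $p=1$, $q=2$) and $j$ is a non-isolated $(r,s)$-vertex with $r,s\leqslant 2$ by Lemma~\ref{lem3}. I write $\alpha:i\to j$, $\beta:j\to i$ for the arrows induced by the edge $e$, $\alpha_1,\alpha_2:i\to a_1,a_2$ for the two arrows of $\bbQ$ starting at $i$, and $\beta_1:b_1\to i$ for the unique arrow of $\bbQ$ ending at $i$. The goal is to force $r=s=2$ with the neighbours of $j$ being exactly the $a_k$'s and $b_1$ (arranged into the displayed block). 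First I would rule out a loop at $i$ (it produces a wild $S_5$-type or $\wt{\wt{\bD}}$-subcategory in covering, as in Lemma~\ref{lem4}), and then apply Lemma~\ref{lem:3.3} to deduce $\alpha\beta\nprec I$. From $\alpha\beta\nprec I$ I extract a tame hereditary subcategory (of type $\wt{\bD}$) built from $\alpha_1,\alpha_2,\alpha,\beta,\beta_1$; maximality of this tame piece then forces the outer neighbours $a_k,b_1$ to be $1$-regular (otherwise one extends to a wild $\wt{\wt{\bD}}$), giving the analogue of condition $(\nabla)$: every length-two path through these vertices lies in a minimal relation.

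**Forcing the regularity of $j$.**
Next I would run the combinatorial bookkeeping with the identities $p_a^-=p_a^+$ together with Lemma~\ref{lem:3.1} and Lemma~\ref{lem:3.2}, exactly as in steps (1)--(3) of Lemma~\ref{lem4}. Since $a_k$ is $1$-regular, the unique arrow $\eta_k\in a_k^+$ satisfies $\alpha_k\eta_k\prec I$ by $(\nabla)$, so Lemma~\ref{lem:3.3} yields an arrow $t(\eta_k)\to i$, i.e.\ $t(\eta_k)\in\{b_1,j\}$. But $t(\eta_k)$ cannot be a $1$-vertex (its source $a_k$ is already $1$-regular, which would contradict Lemma~\ref{lem:3.2}); since $b_1$ is $1$-regular this forces $t(\eta_1)=t(\eta_2)=j$. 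Hence both arrows out of $a_1,a_2$ land on $j$, giving $j^-=r=2$ with $d_1=a_1,d_2=a_2$. For the outgoing side I would argue dually: the unique arrow $\sigma_1\in b_1^-$ gives $\sigma_1\beta_1\prec I$, hence an arrow $i\to s(\sigma_1)$, forcing $s(\sigma_1)=j$ so that $j^+=s\geqslant 1$; a parallel count using the projective-dimension identities at $j$ then pins $s=2$ with the two outgoing arrows of $j$ reaching $b_1$ and the remaining neighbour. The upshot is $r=s=2$, so $j$ is a $2$-vertex in $\bbQ$ and $Q$ contains exactly the displayed block, with no further arrows because $i,j$ are $2$-regular in $\bbQ$ and all outer vertices are $1$-regular in $Q$.

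**Main obstacle.**
The delicate point, and the one I expect to require the most care, is the bookkeeping that closes off the count at $j$ on the \emph{outgoing} side and rules out the degenerate possibility $r=2,s=0$ (which would make $j$ a sink in $\bbQ$, impossible since $\bbQ$ has no sinks by the source/sink-free property established in Section~\ref{sec:3}). The asymmetry $p=1\neq 2=q$ means the argument is not perfectly self-dual, so I would need to verify that the dual relation $\sigma_1\beta_1\prec I$ really produces the second outgoing arrow of $j$ and that the two targets $\{b_1,\text{second neighbour}\}$ match the incoming data via the vector identities $\hat p_i=p_i^\pm$. I would also have to confirm there are no stray arrows between $j$ and the $a_k$'s or between the various outer vertices, which is again controlled by Lemma~\ref{lem:3.3} applied to the non-relations $\beta\alpha_k\nprec I$ and $\beta_1\alpha\nprec I$, yielding wild $\wt{\wt{\bD}}$- or $\wt{\wt{\bE}}$-subcategories in covering whenever such an arrow is present. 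Once these exclusions are assembled, the quiver $Q$ is forced to coincide with the stated block.
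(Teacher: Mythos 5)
Your proposal sets out to prove the wrong conclusion, and this is not a cosmetic slip: you state that ``the goal is to force $r=s=2$'' and later conclude that ``$j$ is a $2$-vertex in $\bbQ$.'' But the lemma asserts that $j$ is a $(2,1)$-vertex, i.e.\ $r=|j^-|=2$ and $s=|j^+|=1$ (in the displayed block, $j$ receives two arrows from the black vertices and emits a single arrow to the white outlet). Worse, the outcome $r=s=2$ is actually impossible here: by Lemma \ref{lem4}, if $j$ were $2$-regular in $\bbQ$ then $i$ would be isolated or $2$-regular as well, whereas $i$ is a $(1,2)$-vertex by hypothesis. The paper's proof opens precisely with this observation. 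Your strategy of transplanting case (b) of Lemma \ref{lem4} wholesale therefore cannot work: the asymmetry $p=1\neq 2=q$ at $i$ breaks the self-duality you are relying on, and in particular there is only one vertex $b_1$, so the tame $\wt{\bD}_6$ configuration $(*)$ of Lemma \ref{lem4} (which needs both $b_1$ and $b_2$) is not available to force the analogue of $(\nabla)$ at the outset.

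Concretely, two essential steps of the paper's argument are absent from your outline. First, one must \emph{exclude} $|j^+|=2$: if $j^+=\{\gamma_1,\gamma_2\}$, both $\alpha\gamma_1,\alpha\gamma_2\prec I$ (else a wild subcategory appears in covering), and Lemma \ref{lem:3.3} then forces $c_1=c_2=b_1$, producing a double arrow $j\to c_1$ alongside $\beta:j\to i$, i.e.\ a wild subquiver of type $K_2^\pm$. Second, one must exclude $|j^-|=1$: there the identities $p_i^-=p_i^+$ and $p_j^-=p_j^+$ give $p_{d}=p_{c}=p_{b}=p_{a_1}+p_{a_2}$, while one of $\beta\alpha_1,\beta\alpha_2\prec I$ forces $a_k=d$ via Lemma \ref{lem:3.3}, yielding $p_{a_1}=0$ or $p_{a_2}=0$, a contradiction. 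Only after $j$ is pinned down as a $(2,1)$-vertex does one show $\{a_1,a_2\}=\{d_1,d_2\}$ and that these are $1$-vertices, assembling the block. Your ``main obstacle'' paragraph instead worries about $r=2,s=0$ (which is trivially excluded since $\bbQ$ has no sinks) and tries to manufacture a second outgoing arrow at $j$ that does not exist. As written, the proposal would not yield the lemma.
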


\begin{proof} Suppose $i$ be is a $(1,2)$-vertex and $j$ is not isolated. By Lemma \ref{lem4}, $j$ cannot be a 
$2$-vertex. As $q=2$, we cannot have a loop $\rho$ at $i$, because otherwise, we get a wild hereditary subcategory 
of type $\wt{\wt{\bD}}_4$, as in the previous proof. In particular, it follows that $\alpha\beta\nprec I$, due to 
\ref{lem:3.3}. Similarily, we cannot have loop $\rho$ at $j$, because then we would get a wild subcategory given 
as follows. 
$$\xymatrix{& a_2 & d_1\ar[d]_{\delta_1} && \\ 
a_1 & \ar[l]_{\alpha_1} i \ar[u]^{\alpha_2} \ar[r]^{\alpha} & j & \ar[l]_{\rho} j \ar[r]^{\beta}  & i }$$ 
So also $\beta\alpha\nprec I$. \medskip

We will show below that $|j^+|=1$ and $|j^-|=2$ (in $\bbQ$). \smallskip 

Assume first that $j^+=\{\gamma_1,\gamma_2\}$ in $\bbQ$ with $\gamma_1\neq \gamma_2$. In this case, both 
$\alpha\gamma_1\prec I$ and $\alpha\gamma_2\prec I$, since one $\alpha\gamma_k\nprec I$ yields the following 
wild subcategory. 
$$\xymatrix{& a_2 & c_k && \\ 
a_1 & \ar[l]_{\alpha_1} i \ar[u]^{\alpha_2} \ar[r]^{\alpha} & j \ar[r]^{\beta} \ar[u]_{\gamma_k} & i & b_1\ar[l]_{\beta_1} }$$ 
Now, $\alpha\gamma_1,\alpha\gamma_2\prec I$ implies  that $c_1=c_2=b_1$, by Lemma \ref{lem:3.3}, because there are no 
loops at $j$, which is impossible, since then we have doube arrows between $j$ and $c_1=c_2$, and an arrow 
$\beta:j\to i$, so we obtain a wild subquiver of type $K_2^\pm$, a contradiction. This shows that indeed $j^+=1$. 
By the proof, we may assume $\alpha\gamma_1\prec I$, so again $b_1=c_1$. \medskip 

Next, suppose $j^-=1$. Then $j$ is a $1$-vertex in $\bbQ$ and $\beta,\gamma_1$ are all arrows in $Q$ starting at $j$. 
Similarily, all arrows ending at $j$ are $\delta_1,\alpha$, thus applying $p_j^-=p_j^+$, we obtain $p_d=p_c$, where 
we abbreviate $d=d_1$ and $c=c_1$ (common term $p_i$ cancels on both sides). In the same way, using $p_i^-=p_i^+$, 
we get that $p_b=p_{a_1}+p_{a_2}$, where $b=b_1$. We have also an equality of vertices $c=b$, so we conclude that 
$$\leqno{(*)}\qquad\qquad p_d=p_c=p_b=p_{a_1}+p_{a_2}.$$ 
But one of $\beta\alpha_1\prec I$ or $\beta\alpha_2\prec I$, since otherwise we would have found the following wild 
subcategory. 
$$\xymatrix@R=0.4cm{ & a_1 && \\ a_2 & i \ar[l] \ar[u] \ar[d]_{\alpha} & \ar[l]_{\beta} j \ar[r]^{\gamma} & t\\ & j && } $$ 
Hence indeed, we have $\beta\alpha_1\prec I$ or $\beta\alpha_2\prec I$, so by Lemma \ref{lem:3.3}, we get an arrow 
$a_k\to j$. We have no loops at $i$, thus $a_k\neq i$, and therefore, we get $a_k=d$. But then the identity $(*)$ 
implies $p_{a_{1}}=0$ or $p_{a_2}=0$, a contradiction. \medskip 

As a result, we proved that $j$ is indeed a $(2,1)$-vertex. Now, observe that there is the following tame subcategory 
in covering of $\Lambda$: 
$$\xymatrix{& a_2 & d_1\ar[d]_{\delta_1} & \\ 
a_1 & \ar[l]_{\alpha_1} i \ar[u]^{\alpha_2} \ar[r]^{\alpha} & j & \ar[l]_{\delta_2} d_2 }$$ 
This subcategory can be extended to a wild subcategory by one arrow starting or ending at $a_{1,2}$ or $d_{1,2}$. 
Hence we conclude that $a_{1,2}^-=1$ and $d_{1,2}^+=1$. Further, note that both $\beta\alpha_1,\beta\alpha_2\prec I$, since 
for one $\beta\alpha_k\nprec I$, we can construct the following wild subcategory. 
$$\xymatrix@R=0.6cm{& & a_k & d_1\ar[d]_{\delta_1} &  \\ 
t & \ar[l]_{\gamma_1} j \ar[r]^{\beta} & i \ar[u]_{\alpha_k}  \ar[r]^{\alpha}&  j  & \ar[l]_{\delta_2} d_2 }$$ 
Therefore, we deduce from Lemma \ref{lem:3.3} that $\{a_1,a_2\}=\{d_1,d_2\}$, so $Q$ admits the required subquiver. 
Moreover, both $a_1=d_1$ and $a_2=d_2$ are $1$-vertices (in $Q$), because otherwise $a_{1,2}^-\geqslant 2$ or 
$d_{1,2}^+\geqslant 2$. \medskip 

Finally, note that there are no more arrows in $Q$ starting or ending at $i,j$ and $b=c$, so the subquiver is 
a block. The proof is now complete. \end{proof} \medskip 

Similarily, one can prove dual version of the above lemma: {\it if $i$ is a $(2,1)$-vertex, then $j$ is a 
$(1,2)$-vertex, and we obtain analogous subquiver.} 

\begin{remk} \normalfont 
In the last proof presented below, we will sometimes refer 
to the Ringel's list \cite[see 1.5. Theorem 2]{Rin}, which contains 37 wild one-relation algebras. We will 
write RiN to indicate the wild one-relation algebra, whose number on the Ringel's list is $N$. Note that the 
list in \cite{Rin} uses three sorts of numerations to denote the graphs. By a number on this list, we mean its 
number in the Roman numeration (one of the three conventions). 
\end{remk} \smallskip 

\begin{lemma}\label{lem6} If both $i,j$ are $1$-regular, they are contained in one component of $\bbQ$, which induces 
the following block in $Q$.  
$$\xymatrix@R0.4cm{ & \bullet \ar[rd] \ar@<-0.1cm>[dd] & \\ 
\circ \ar[ru]   && \circ \ar[ld]\\ 
& \bullet \ar@<-0.1cm>[uu] \ar[lu] & }$$
\end{lemma}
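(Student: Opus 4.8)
The plan is to collapse the statement to two vertex identifications and then read off the block. Write $\alpha\colon i\to j$ and $\beta\colon j\to i$ for the arrows of the $2$-cycle coming from the edge $e$, and, using that $i,j$ are $1$-regular in $\bbQ$, let $\alpha_1\colon i\to a$ and $\beta_1\colon b\to i$ be the unique $\bbQ$-arrows at $i$, and $\gamma_1\colon j\to c$, $\delta_1\colon d\to j$ the unique $\bbQ$-arrows at $j$ (so in the notation fixed before Lemma \ref{lem3} we have $p=q=r=s=1$ and $a=a_1$, $b=b_1$, $c=c_1$, $d=d_1$). The displayed block is precisely the $4$-cycle $i\to a\to j\to b\to i$ glued to the $2$-cycle $i\rightleftarrows j$, so it is enough to prove $a=d$ and $b=c$. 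Granting this, the four vertices are distinct ($a=b$ would create a second $2$-cycle at $i$, impossible since $E$ is a disjoint union of edges by Theorem \ref{thm1}, and $a,b\neq i,j$ since there are no loops), the path $i\xrightarrow{\alpha_1}a\xrightarrow{\delta_1}j$ (legitimate once $a=d$) puts $i$ and $j$ in one component of $\bbQ$, and the six arrows $\alpha,\beta,\alpha_1,\beta_1,\gamma_1,\delta_1$ exhaust the arrows of $Q$ meeting the black vertices $i,j$; hence the subquiver is a block of the required shape.

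The reduction to $I$ runs through Lemma \ref{lem:3.3}. First I would rule out loops at $i$ and at $j$: resolving such a loop in a cover turns the $2$-cycle together with the two incident $\bbQ$-arrows into a copy of $i$ (or $j$) of total degree at least five, which supports a wild subcategory exactly as in the loop arguments in the proofs of Lemmas \ref{lem4} and \ref{lem5}. With loops excluded, Lemma \ref{lem:3.3} applied to $\alpha\beta\colon i\to i$ and $\beta\alpha\colon j\to j$ gives $\alpha\beta\nprec I$ and $\beta\alpha\nprec I$. The same lemma converts relations into the desired identifications: $\delta_1\beta\prec I$ forces an arrow $i\to d$ and hence $d=a$, while $\beta\alpha_1\prec I$ forces an arrow $a\to j$ and again $d=a$; dually, under the symmetry $i\leftrightarrow j$, either $\alpha\gamma_1\prec I$ or $\beta_1\alpha\prec I$ yields $c=b$. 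Thus it remains to show that at least one relation from each of the pairs $\{\delta_1\beta,\beta\alpha_1\}$ and $\{\alpha\gamma_1,\beta_1\alpha\}$ lies in $I$.

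This last point is the core of the argument, and I would prove it by contradiction. Unfolding the $2$-cycle in a cover produces a chain $i\xrightarrow{\alpha}j\xrightarrow{\beta}i'$; attaching $\delta_1$ and $\gamma_1$ makes the central copy of $j$ a vertex of degree four, and its four branches continue through $\alpha_1,\beta_1$ at the copies of $i,i'$ to the prongs $a,b,c,d$. Assuming both $\delta_1\beta\nprec I$ and $\beta\alpha_1\nprec I$, so that this branch vertex is genuinely fourfold, the resulting subcategory is wild: if in addition all relevant length-two paths through $j$ are nonzero it is hereditary of type $\wt{\wt{\bD}}$ or $\wt{\wt{\bE}}$ (a degree-four vertex excludes every Euclidean diagram), and if exactly one of them vanishes it is a wild one-relation algebra, to be located on Ringel's list in the form RiN announced in the Remark above. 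I expect the main obstacle to be the bookkeeping here: the four two-paths through the branch vertex, namely $\alpha\beta$, $\delta_1\beta$, $\alpha\gamma_1$ and $\delta_1\gamma_1$, split the analysis into several configurations, and in each configuration that is not outright wild one must instead read off $a=d$ or $b=c$ directly from Lemma \ref{lem:3.3} (for example $\delta_1\beta\prec I$ already gives $d=a$, while an auxiliary relation $\delta_1\gamma_1\prec I$ produces an arrow $c\to d$ that has to be fed back in). Running the mirror-image analysis under $i\leftrightarrow j$ settles the pair $\{\alpha\gamma_1,\beta_1\alpha\}$, and with both $a=d$ and $b=c$ established the block description and the connectivity of $i,j$ in $\bbQ$ follow as in the first paragraph.
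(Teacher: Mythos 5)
Your first paragraph and the reduction agree with the paper: excluding loops at $i,j$, deducing $\alpha\beta,\beta\alpha\nprec I$ from Lemma \ref{lem:3.3}, and observing that the lemma collapses to the two identifications $a=d$ and $b=c$ is exactly how the paper begins. The gap is in the core. You propose to prove the identifications by showing that at least one path from each of the pairs $\{\delta_1\beta,\beta\alpha_1\}$ and $\{\alpha\gamma_1,\beta_1\alpha\}$ lies in $I$, arguing that otherwise an unfolded degree-four branch vertex forces wildness. This does not go through as stated. First, the hereditary subcategory you can actually certify from your hypotheses is too small to be wild: the piece $b\to i\to j$, $j'\to i$, $j\to c$, $j\to i'$ obtained from $\alpha\beta,\beta\alpha,\beta_1\alpha,\alpha\gamma_1\nprec I$ is of type $\wt{\bD}_5$, which is tame; wildness only appears after extending past the prongs $a,b,c,d$ to \emph{their} neighbours, and whether those extensions exist is precisely what has to be controlled. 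Second, and more seriously, your fallback for the non-hereditary configurations (``read off $a=d$ or $b=c$ directly from Lemma \ref{lem:3.3}'') fails in the case where $\beta_1\alpha_1\prec I$ and $\delta_1\gamma_1\prec I$ but none of the four cross paths is $\prec I$: Lemma \ref{lem:3.3} then only produces arrows $a\to b$ and $c\to d$, which identify nothing, and the local picture now carries two relations, so it is neither hereditary nor a one-relation algebra from Ringel's list. This is exactly Case 1 of the paper's proof, and it cannot be dispatched by local wildness alone.

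What the paper does instead is assume the vertex inequality $b\neq c$ outright (which yields $\beta_1\alpha,\alpha\gamma_1\nprec I$ \emph{and} is used repeatedly as a distinctness hypothesis, e.g.\ to rule out $b'=j$), and then brings in machinery that is entirely absent from your sketch: the dimension-vector identities $p_a=p_b$ and $p_c=p_d$ coming from $p_i^-=p_i^+$ and $p_j^-=p_j^+$, Lemma \ref{lem:3.1} to exclude $1$-vertices among $a,b,c,d$ (this is where infinite representation type enters), Lemma \ref{lem:3.6} for length-three paths, the dichotomy $|b^+|=1$ or $|c^-|=1$, and wild one-relation algebras (RiII, RiIV, RiVIII) whose quivers reach beyond $a,b,c,d$ to further arrows $\delta\in d^\pm$, $\beta'\in b^-$, etc. Note also that your target claim (``one path from each pair is $\prec I$'') is strictly stronger than $a=d$ and $b=c$ and is never established by the paper; since the configuration you would need to contradict can occur with $b=c$ already holding, there may be no contradiction to derive from your weaker hypothesis, whereas the paper's hypothesis $b\neq c$ genuinely cannot occur. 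To repair the argument you should adopt the paper's contradiction hypothesis and supply the case analysis on $\beta_1\alpha_1,\delta_1\gamma_1\prec I$ together with the dimension-vector input.
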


\begin{proof} Assume $i,j$ are both $1$-vertices of $\bbQ$; we keep the notation introduced before Lemma \ref{lem3}, 
but we abbreviate $a:=a_1$, $b:=b_1$, $c=c_1$ and $d=d_1$. First, we claim that there are no loops in $Q$ at vertices 
$i$ and $j$. If $\rho\in Q_1$ is a loop at $i$, we can find the following wild subcategory in covering: 
$$\xymatrix@R=0.6cm{&& b \ar[d]^{\beta_1} & a & \\ 
c & \ar[l]_{\gamma_1} j \ar[r]^{\beta} & i & \ar[l]_{\rho} i \ar[r]^{\alpha} \ar[u]^{\alpha_1} & j }$$ 
Analogous wild subcategory of type $\wt{\wt{\bD}}_5$ can be found by resolving a loop at $j$. Hence indeed, we have 
no loops at $i,j$. In particular, due to Lemma \ref{lem:3.3}, we conclude that $\alpha\beta\nprec I$ and 
$\beta\alpha\nprec I$. Therefore, all the arrows starting or ending at $i$ or $j$ are $\alpha,\beta,\alpha_1,\beta_1,
\gamma_1$ and $\delta_1$. To prove our claim it is sufficient to show that $b=c$ and $a=d$. \medskip 

Note that identities $p_i^-=p_i^+$ and $p_j^-=p_j^+$ yield equalities of dimension vectors: $p_b=p_a$ and $p_d=p_c$ 
(common terms $p_i$ or $p_j$ cancel on both sides). \bigskip 

Suppose now that $b\neq c$. Then using Lemma \ref{lem:3.3}, we deduce that $\beta_1\alpha\nprec I$ and 
$\alpha\gamma_1\nprec I$. Moreover, we must have $|b^+|=1$ or $|c^-|=1$. Indeed, suppose both $|b^+|,|c^-|\geqslant 2$. 
Then there are arrows $\beta':b\to\circ$, different from $\beta_1$, and $\gamma':\circ\to c$, different from $\gamma_1$, 
and hence, we obtain the following wild hereditary subcategory of type $\wt{\wt{\bE}}_6$. 
$$\xymatrix@R=0.5cm{&& d \ar[d]_{\delta_1}  &&& \\ && j  &&& \\ 
\circ & \ar[l]_{\beta'} b \ar[r]^{\beta_1} & i \ar[u]^{\alpha} & \ar[l]_{\beta} 
j \ar[r]^{\gamma_1} & c & \ar[l]_{\gamma'} \circ }$$ 
Consequently, we proved that one of $|b^+|$ or $|c^-|$ is $1$, say $|b^+|=1$ (the proof in case $|c^-|=1$ follows dually). 
In particular, $\beta_1$ is the unique arrow in $Q$ starting at $b$. We will consider two cases below. \smallskip 

{\bf Case 1.} Assume that both $\beta_1\alpha_1\prec I$ and $\delta_1\gamma_1\prec I$. Then we have two arrows 
$\sigma:a\to b$ and $\tau:c\to d$, due to Lemma \ref{lem:3.3}, so two triangles containing vertices $a,b,i$ and 
$c,d,j$, respectively. \smallskip 

Suppose first that $\beta\alpha_1\nprec I$. In this case, $\beta_1\alpha_1$ is the unique path of length two passing 
through $i$ which is involved in a minimal relation. If $|d^+|\geqslant 2$ and $\delta:d\to \circ $ is an arrow in 
$d^+$ different from $\delta_1$, then we get the following subcategory in covering 
$$\xymatrix@R=0.6cm{ & a && b\ar[ld]_{\beta_1} && \\ 
c & \ar[l]_{\gamma_1} j \ar[r]^{\beta} & i \ar[lu]_{\alpha_1} \ar[r]^{\alpha} & j & \ar[l]_{\delta_1} d \ar[r]^{\delta} & \circ }$$
But it is isomorphic to a wild one-relation algebra RiII, and we obtain a contradiction. \smallskip 

If $|d^+|=1$, then we can construct a subcategory of the same shape, except $\delta$ is an arrow in $d^-$ with 
$\delta\delta_1\nprec I$. Indeed, let $|d^+|=1$ and consider arbitrary arrow $\delta\in d^-$. We can assume that 
$|d^-|\geqslant 2$ and $\delta\neq \tau$, because otherwise $d$ is a $1$-vertex in $Q$, and hence $p_d=p_c=p_d^-$, 
so we obtain a contradiction with Lemma \ref{lem:3.1}. Now, given $\delta:d'\to d$ with $\delta\delta_1\prec I$, 
we get an arrow $j\to d'$, by \ref{lem:3.3}, so $d'=i$ or $c$. If $d'=c$, then we get double arrows from $c$ to $d$, 
and both $\tau\delta_1\prec I$ and $\delta\delta_1\prec I$. Actually, we can change presentation as described in 
the proof of Lemma \ref{lem4} to get one of the paths $\tau\delta_1\nprec I$ or $\delta\delta_1\nprec I$, and take 
$\delta$ as the one not inducing a relation. Hence we can assume that $d'=i$, so we have $d=a$ and $\delta=\alpha_1$ 
($d\neq j$, since there are no loops at $j$). Now, we claim that there is one more arrow $\xi\in d^-$, different 
from $\tau$ and $\delta$. Otherwise, we would get $d^-=\{\tau,\delta\}$, thus the following identities hold: 
$$p_j=p_d^+=p_d^-=p_c+p_i=p_j^+,$$ 
and we obtained a contradiction with Lemma \ref{lem:3.1}. As a result, there is an arrow $\xi:x\to d$ such 
that $\xi\neq \tau$ and $\xi\neq\delta$. As above, we can easily exclude double arrows $c\to d$ (or $i\to d$), 
and hence $x\neq c$ and $x\neq i$. It follows from Lemma \ref{lem:3.3} that $\xi\delta_1\nprec I$ and we 
choose new $\delta:=\xi$. \smallskip 

This finishes the proof in case $\beta\alpha_1\nprec I$. Now, suppose that $\beta\alpha_1\prec I$. Then we 
have an arrow $a\to j$, so $a=d$ ($a\neq i$, because there are no loops at $i$). Consequently, we have two 
arrows $\tau\neq \alpha_1$ ending at $d$ and two arrows $\sigma\neq\delta_1$ starting at $a$. We recall 
that $|b^+|=1$, by the assumption, so $|b^-|\geqslant 2$, since otherwise, $b$ is a $1$-vertex with $p_b=p_a=p_b^-$, 
a contradiction with Lemma \ref{lem:3.1}). We will prove that for any arrow $\beta'\in b^-$, we have 
$\beta'\beta_1\nprec I$. Indeed, if $\beta'\beta_1\prec I$, then by Lemma \ref{lem:3.3}, there is an arrow 
$i\to b'=s(\beta')$, so $b'=a$ or $j$. But $b'=j$ forces $b=c$ or $i$, which is impossible by the assumption $b\neq c$ 
(and no loops at $i$), hence $b'=a$, and we obtain double arrows $\sigma,\beta':a\to b$. Now, since $a=d$ there is an 
arrow $\delta_1:a\to j$ different from $\sigma$ and $\beta'$, and we obtain a wild subquiver of type $K_2^+$, 
a contradiction. \smallskip 

Finally, take any $\beta':b'\to b$ different from $\sigma$. We know that $\beta'\beta_1\nprec I$, and this gives the 
following wild hereditary subcategory of type $\wt{\wt{\bE}}_7$. 
$$\xymatrix@R=0.6cm{&&&& c &&& \\ 
\circ & \ar[l] c \ar[r]^{\tau} & d=a & \ar[l]_{\alpha_1} i \ar[r]^{\alpha} & j \ar[r]^{\beta} \ar[u]^{\gamma_1}
& i & \ar[l]_{\beta_1} b & b' \ar[l]_{\beta'}}$$
if $|c^+|\geqslant 2$ ($\tau'\neq \tau$ in $c^+$). As before, $c$ cannot be a $1$-vertex, hence we have 
$|c^+|\geqslant 2$ if only $|c^-|=1$. For $|c^-|\geqslant 2$, we can construct a wild subcategory of the same 
type given as follows. 
$$\xymatrix@R=0.6cm{&&& b\ar[d]_{\beta_1} & &&&\\ 
\circ\ar[r] & c & \ar[l]_{\gamma_1} j \ar[r]^{\beta} & i \ar[r]^{\alpha} 
& j & \ar[l]_{\delta_1} d=a \ar[r]^{\sigma} & b & \ar[l] \circ}$$ \medskip 

{\bf Case 2.} Now, it remains to consider the case when one of $\beta_1\alpha_1,\delta_1\gamma_1$ is not involved 
in a minimal relation of $I$. We will proceed only in case $\beta_1\alpha_1\nprec I$, since the second case follows 
from dual arguments. \smallskip 

Assume that $\beta_1\alpha_1\nprec I$. In this case, we must have $\beta\alpha_1\prec I$, because otherwise, 
all paths of length two passing through $i$ are not involved in minimal relations, and we obtain the following 
wild subcategory of type $\wt{\wt{\bD}}_4$. 
$$\xymatrix@R=0.4cm{& j \ar[d]_{\beta} && \\ 
b \ar[r]^{\beta_1} & i \ar[d]_{\alpha_1} \ar[r]^{\alpha} & j & \ar[l]_{\delta_1} d  \\ & a && }$$ 
As a result $\beta\alpha_1$ is the unique path (of length two) passing through $i$ which is involved in 
a minimal relation. By Lemma \ref{lem:3.3}, there is an arrow $a\to j$, so $a=d$. Consequently, if 
$|a^+|\geqslant 2$ and $|a^-|\geqslant 2$, then we get the following subcategory 
$$\xymatrix@R=0.6cm{c & \ar[l]_{\gamma_1} j\ar[rd]^{\beta} &   b\ar[d]^{\beta_1} &&& \\ 
\circ \ar[r] & a & \ar[l]_{\alpha_1} i \ar[r]^{\alpha} & j & \ar[l]_{\delta_1} d \ar[r] & \circ }$$ 
which is isomorphic to a wild one-relation algebra RiVIII. \medskip 

Now, it suffices to consider the case $|d^+|=1$. Suppose first that $|d^-|\geqslant 2$, and take any 
$\delta\in d^-=a^-$ different from $\alpha_1$. If $\delta\delta_1\prec I$, then by \ref{lem:3.3}, one 
gets an arrow $j\to d'=s(\delta)$, so $d'\in\{c,i\}$. We cannot have $d'=i$, since then we have double 
arrows $\alpha_1,\delta:i\to a$, which gives a wild subquiver of type $K_2^+$ after enlarging by 
$\alpha:i\to j$. Hence, we obtain an arrow $\delta:c\to a$, and moreover, every such an arrow with 
$\delta\delta_1\prec I$ must have a source in $c$. Now, observe that there is at least one arrow $\xi\in d^-$ 
different from $\alpha_1$ and $\delta$. In fact, if this was not the case, we would have 
$d^-=a^-=\{\alpha_1,\delta\}$, and hence, the following equalities:  
$$p_j=p_a^+=p_a^-=p_i+p_c=p_j^+.$$ 
This gives again a contradiction with Lemma \ref{lem:3.1}, and therefore, there is an arrow $\xi:x\to a=d$ 
different from $\alpha_1$ and $\delta$. It follows also that $\xi\delta_1\nprec I$ and we obtain the 
same wild subcategory of type RiVIII as in case $|a^\pm|\geqslant 2$ above, but with $\xi:x\to d$ instead 
of an arrow $d\to\circ$. \smallskip 

Finally, assume that $|d^-|=1$, i.e. $d$ is a $1$-regular vertex (in $Q$), and consider any $\beta'\in b^-$. 
We claim that $\beta'\beta_1\nprec I$. If this is not the case, then by \ref{lem:3.3}, we obtain an 
arrow $i\to b'=s(\beta')$, thus $b'\in\{a,j\}$. Because $b\neq c$ (and no loop at $i$), we cannot have 
$b'=j$, and hence, we get $b'=a=d$, which is a $1$-regular vertex. But then $b=j$, which is impossible. 
In a similar way, one can show that $\beta'\beta_1\alpha_1\nprec I$. If also $\beta'\beta_1\alpha\nprec I$, 
then we find a subcategory in covering given as follows 
$$\xymatrix@R=0.5cm{&& b' \ar[d]^{\beta'} && \\ c & \ar[l]_{\gamma_1} j\ar[rd]^{\beta} &   b\ar[d]^{\beta_1} && \\ 
 & a & \ar[l]_{\alpha_1} i \ar[r]^{\alpha} & j & \ar[l]_{\delta_1} d }$$ 
which is isomorphic to a wild one-relation algebra RiIV. If $\beta'\beta_1\alpha\prec I$, then by Lemma 
\ref{lem:3.6}, we conclude that $b'=c$ and $\beta'$ is an arrow $c\to b$ (we cannot have an arrow $i\to b$, 
by Theorem \ref{thm1}). But $|b^-|\geqslant 2$, as in Case 1, hence we can pick an arrow $\beta''\in b^-$ 
(different from $\beta'$) such that $\beta''\beta_1\nprec I$, $\beta''\beta_1\alpha_1\nprec I$. This time 
we have $\beta''\beta_1\alpha\nprec I$, since otherwise, using \ref{lem:3.6} again, one obtains double arrows 
$\beta',\beta'':c\to b$ satisfying $\beta'\beta_1\nprec I$ and $\beta''\beta_1\nprec I$, which gives a wild subquiver 
of type $K_2^*$. This yields a wild subcategory of the same type RiIV as above, but with $\beta'$ replaced by 
$\beta''$. \medskip 

This finishes the proof in Case 2, and consequently, we proved that $b=c$. The proof of $a=d$ follows dually. 
\end{proof} \medskip 

\normalfont 

Summing up the results obtained in Lemmas \ref{lem3}-\ref{lem6}, we end up with the following theorem. 

\begin{thm} If $i,j$ are vertices in $\bbQ$ joined by an edge in $E$, then $Q$ admits a block of one the 
forms  
$$\xymatrix@R=0.4cm{\\ \\ \circ \ar@<-0.45ex>[r] & \bullet \ar@<-0.45ex>[l] \ar@(rd,ru)@{..>}[]& }
\xymatrix@R0.4cm{\\ & \bullet \ar[rd] \ar@<-0.1cm>[dd] & \\ 
\circ \ar[ru]   && \circ \ar[ld]\\ 
& \bullet \ar@<-0.1cm>[uu] \ar[lu] & } \quad 
\xymatrix@R=0.4cm{ &\bullet \ar[rdd] & \\ & \bullet\ar[rd]& \\ 
\bullet \ar[ru] \ar[ruu] \ar@<-0.35ex>[rr]&& \bullet \ar[ld] \ar@<-0.35ex>[ll]\\ & \circ \ar[lu] & } \quad 
\xymatrix@R=0.4cm{& \bullet \ar[rdd] & \\ & \bullet \ar[rd] & \\ 
\bullet \ar[ru]\ar[ruu] \ar@<-0.35ex>[rr] && \ar[ld]\ar[ldd] \ar@<-0.35ex>[ll] \bullet \\ 
& \bullet \ar[lu] & \\ & \ar[luu] \bullet & }$$ 
\end{thm}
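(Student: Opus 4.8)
The plan is to read this theorem off Lemmas \ref{lem3}--\ref{lem6} as a case analysis on the regularity types of the endpoints $i,j$ in the reduced quiver $\bbQ$. By Theorem \ref{thm1} the edge $e$ joining $i$ and $j$ induces exactly a $2$-cycle $\alpha\colon i\to j$, $\beta\colon j\to i$ in $Q$, so the four target blocks differ only in how $i$ and $j$ are embedded in $\bbQ$. First I would record two structural constraints. By Lemma \ref{lem3}, both $i$ and $j$ are at most $2$-regular in $\bbQ$; and since $\bbQ=\bfQ_{\bS}$ has neither a source nor a sink (this is property PS2, as noted in Section \ref{sec:3}), any non-isolated vertex of $\bbQ$ has at least one incoming and one outgoing arrow. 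Consequently each of $i,j$ is either isolated, or of one of the types $(1,1)$, $(1,2)$, $(2,1)$, $(2,2)$. I would also observe that $i$ and $j$ cannot both be isolated, for otherwise $\{i,j\}$ would meet the rest of $Q$ only through the $2$-cycle, contradicting connectivity of $Q$ together with $n\geqslant 3$.

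With these constraints fixed, the argument splits into four mutually exclusive, jointly exhaustive cases. If one endpoint, say $j$, is isolated in $\bbQ$, then by Theorem \ref{thm1} and the isolation of $j$ the only non-loop arrows incident to $j$ in $Q$ are $\alpha$ and $\beta$; together with the possible loop at $j$ (at most one, by tameness, as recalled in Section \ref{sec:2}) this is precisely the first block, with $i$ as its white outlet. If both endpoints are non-isolated and at least one is a $2$-vertex, then Lemma \ref{lem4} forces both to be $2$-vertices and produces the fourth block. If both are non-isolated and one is a $(1,2)$-vertex (respectively a $(2,1)$-vertex), then Lemma \ref{lem5} (respectively its dual) forces the other to be a $(2,1)$-vertex (respectively a $(1,2)$-vertex) and produces the third block. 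Finally, if neither endpoint is isolated, a $2$-vertex, nor of mixed type, the only remaining option is that both are $(1,1)$-vertices, whereupon Lemma \ref{lem6} gives the second block.

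The step I expect to demand the most care is verifying that these cases are genuinely exhaustive, since the lemmas are stated as one-directional implications rather than as a clean trichotomy. The key observation is that their hypotheses interlock: once both $i,j$ are non-isolated, a $(2,2)$ type at either end is absorbed by Lemma \ref{lem4}, a $(1,2)$ or $(2,1)$ type at either end is absorbed by Lemma \ref{lem5} and its dual, and only the symmetric $(1,1)/(1,1)$ configuration survives both filters. I would make the case distinction airtight by additionally tracking the loop data: Lemmas \ref{lem4}--\ref{lem6} each certify that no loops occur at the black vertices of their blocks, whereas the first block is the only one admitting a loop, placed at its unique black vertex. Collecting these observations block by block then reproduces exactly the four configurations in the statement.
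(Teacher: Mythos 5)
Your proposal is correct and follows essentially the same route as the paper, which derives this theorem by ``summing up'' Lemmas \ref{lem3}--\ref{lem6}; your explicit case analysis on the $\bbQ$-types of $i$ and $j$ is exactly the argument the paper leaves implicit. The details you add --- using PS2) to see that each non-isolated endpoint is of type $(1,1)$, $(1,2)$, $(2,1)$ or $(2,2)$, handling the isolated-endpoint case via the first block, and checking that the hypotheses of Lemmas \ref{lem4}--\ref{lem6} interlock exhaustively --- are precisely the right ones.
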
 

Note that in the first case, the dotted loop means that there may be a loop or not. In the last case, the block has no outlets, 
hence it exhausts the whole quiver $Q$. \smallskip 

The above result combined with Theorem \ref{thm1} provides the proof of The Reconstruction Theorem, 
the main result of this paper. 

\bigskip 

\section*{Acknowledgements} 
The author would like to thank Karin Erdmann for inspiring discussions and introduction to the properties 
of $4$-periodic resolutions of simple modules, which led to the discoveries presented in this paper. 

\bigskip

\end{document}